\documentclass[12pt, linespread]{amsart}
\usepackage[T1]{fontenc}
\usepackage[utf8]{inputenc}
\usepackage[left=3cm,top=2.5cm,bottom=2.5cm,right=3cm]{geometry}
\usepackage{times}
\usepackage{amsmath}
\usepackage{amsxtra}
\usepackage{amscd}
\usepackage{amsthm}
\usepackage{amssymb}
\usepackage[normalem]{ulem}
\usepackage{comment}
\usepackage{xcolor}
\usepackage{tikz}
\usetikzlibrary{matrix,arrows,decorations.pathmorphing,automata, matrix, positioning, calc, shapes.multipart,decorations.pathreplacing,calligraphy}
\usepackage{graphicx}
\usepackage[config, labelfont={normalsize}]{caption,subfig}
\captionsetup[subfigure]{subrefformat=simple,labelformat=simple,listofformat=subsimple}

\usepackage{mathrsfs}
\definecolor{green}{RGB}{0,127,0}
\definecolor{red}{RGB}{191,0,0}
\usepackage[colorlinks,cite color=red,link color=green,pagebackref=true]{hyperref}
\usepackage[shortlabels]{enumitem}
\usepackage[vcentermath]{youngtab}
\usepackage{epigraph}
\usepackage{genyoungtabtikz}
\usepackage[capitalize]{cleveref}
\usepackage{mathtools}
\usepackage{algorithm,algorithmicx,algpseudocode}
\algrenewcommand{\algorithmicrequire}{\textbf{Input:}}
\algrenewcommand{\algorithmicensure}{\textbf{Output:}}
\algnewcommand\And{\textbf{and} }
\algnewcommand\Or{\textbf{or} }

\usepackage{mynewcommands}

\setcounter{tocdepth}{1}

\numberwithin{equation}{section}

\theoremstyle{definition}
\newtheorem{theorem}[num]{Theorem}
\newtheorem{lemma}[num]{Lemma}
\newtheorem{proposition}[num]{Proposition}
\newtheorem{prop/def}[num]{Proposition-Definition}
\newtheorem{corollary}[num]{Corollary}
\newtheorem{problem}[num]{Problem}
\newtheorem{definition}[num]{Definition}

\usepackage{todonotes}

\newcommand{\calA}{\mathcal{A}}

\newcommand{\CCC}{\mathcal{C}}
\newcommand{\I}{\mathfrak{I}}

\newcommand{\ov}[1]{\overline{#1}}

\DeclareMathOperator{\col}{col}
\DeclareMathOperator{\row}{row}
\DeclareMathOperator{\shape}{shape}

\DeclareMathOperator{\Cyc}{CoCyc}
\DeclareMathOperator{\AAA}{A}
\DeclareMathOperator{\CC}{C}
\DeclareMathOperator{\Tab}{Tab}

\DeclareMathOperator{\YTab}{YTab}
\DeclareMathOperator{\STab}{STab}
\DeclareMathOperator{\SYTab}{SYTab}

\DeclareMathOperator{\SSTab}{SSTab}
\DeclareMathOperator{\SSYTab}{SSYTab}
\DeclareMathOperator{\grav}{grav}
\DeclareMathOperator{\Comp}{Comp}
\DeclareMathOperator{\Part}{Part}
\DeclareMathOperator{\shift}{shift}
\DeclareMathOperator{\wshift}{wshift}
\DeclareMathOperator{\localshift}{locshift}

\DeclareMathOperator{\ch}{ch}
\DeclareMathOperator{\red}{red}
\DeclareMathOperator{\nred}{nred}
\DeclareMathOperator{\partners}{partners}
\DeclareMathOperator{\partner}{partner}
\DeclareMathOperator{\true}{True}
\DeclareMathOperator{\false}{False}
\DeclareMathOperator{\Span}{Span}
\DeclareMathOperator{\pos}{pos}
\DeclareMathOperator{\SympTab}{SympTab}

\title[Symplectic Kostka--Foulkes polynomials I]{A positive combinatorial formula for symplectic Kostka--Foulkes polynomials I: Rows}

\author[M.~Dołęga]{Maciej Dołęga}
\address{
Institute of Mathematics, 
Polish Academy of Sciences, 
ul. Śniadeckich 8, 
00-956 Warszawa, Poland.}
\email{mdolega@impan.pl}

\author[T.~Gerber]{Thomas Gerber}
\address{\'Ecole Polytechnique F\'ed\'erale de Lausanne, 1015 Lausanne, Switzerland.}
\email{thomas.gerber@epfl.ch}

\author[J.~Torres]{Jacinta Torres}
\address{
Institute of Mathematics, 
Polish Academy of Sciences, 
ul. Śniadeckich 8, 
00-956 Warszawa, Poland.}
\email{jtorres@impan.pl}

 \thanks{MD is supported by {\it Narodowe Centrum Nauki}, grant
   UMO-2017/26/D/ST1/00186. TG is supported by the Ambizione project of the Swiss National Science Foundation. JT was supported by the { \it Deutsche Forscungsgemeinschaft} project TO 1135-1 and partially supported by  {\it Narodowe Centrum Nauki}, grant number 2017/26/A/ST1/00189 and the Max Planck Institute for Mathematics in the Sciences in Leipzig.}

 \keywords{combinatorial representation theory, Kostka--Foulkes polynomials, Lecouvey's conjecture, charge, type C}

\newcommand{\op}{\operatorname}

\newcount\cols
{\catcode`,=\active\catcode`|=\active
 \gdef\Young#1{\hbox{$\vcenter
 {\mathcode`,="8000\mathcode`|="8000
  \def,{\global\advance\cols by 1 &}%
  \def|{\cr
        \multispan{\the\cols}\hrulefill\cr
        &\global\cols=2 }%
  \offinterlineskip\everycr{}\tabskip=0pt
  \dimen0=\ht\strutbox \advance\dimen0 by \dp\strutbox
  \halign
   {\vrule height \ht\strutbox depth \dp\strutbox##
    &&\hbox to \dimen0{\hss$##$\hss}\vrule\cr
    \noalign{\hrule}&\global\cols=2 #1\crcr
    \multispan{\the\cols}\hrulefill\cr%
   }
 }$}}
\gdef\Skew(#1:#2){\hbox{$\vcenter
{\mathcode`,="8000\mathcode`|="8000
  \dimen0=\ht\strutbox \advance\dimen0 by \dp\strutbox
  \def\boxbeg{\vbox
    \bgroup\hrule\kern-0.4pt\hbox to\dimen0\bgroup\strut\vrule\hss$}%
  \def\boxend{$\hss\egroup\hrule\egroup}%
  \def,{\boxend\boxbeg}%
  \def|##1:{\boxend\vrule\egroup\nointerlineskip\kern-0.4pt
    \moveright##1\dimen0\hbox\bgroup\boxbeg}%
  \def\\##1\\##2:{\boxend\vrule\egroup\nointerlineskip\kern-0.4pt
    \kern ##1\dimen0\moveright##2\dimen0\hbox\bgroup\boxbeg}%
  \moveright#1\dimen0\hbox\bgroup\boxbeg#2\boxend\vrule\egroup
 }$}}
}

\begin{document}

\sloppy

\begin{abstract}
We prove a conjecture of Lecouvey, which proposes a closed, positive
combinatorial formula for symplectic Kostka--Foulkes polynomials, in
the case of rows of arbitrary weight. 
To show this, we construct a new algorithm for computing cocyclage in
terms of which the conjecture is described. Our algorithm is free of
local constraints, which were the main obstacle in 
Lecouvey's original construction.
In particular, we show that our model is governed by the situation in type A.
This approach works for arbitrary weight and we expect it to lead to a
proof of the conjecture in full generality.  
\end{abstract}

\maketitle

\section{Introduction}

The main motivation for this work is understanding an interplay
between combinatorics and representation theory which is highly
manifested in the structure of so-called \emph{Kostka--Foulkes polynomials}. 
Let $\mathfrak{g}$ be a complex, simple Lie algebra of rank $n$. Kostka--Foulkes polynomials $K_{\lambda, \mu}(q)$ are defined for two dominant integral weights as the transition 
coefficients
between two important bases of the ring of symmetric functions in the
variables $x = (x_1,..., x_n)$ over $\mathbb{Q}(q)$: Hall--Littlewood
polynomials $P_{\lambda}(x;q)$ and Weyl characters $\chi_\mu(x)$. They
are $q$-analogues of weight multiplicities \cite{Kato1982}, affine
Kazhdan--Lusztig polynomials \cite{Lusztig1983, Kato1982}, and appear
in various other situations in geometric and combinatorial
representation theory (see \cite{Brylinski1989, AnthonyLetzterZelikson2000} and references
therein). We refer the reader to \Cref{kostka} for a precise definition
of Kostka--Foulkes polynomials and recommend \cite{NelsenRam2003} as a thorough reference.  \\

Due to their interpretation as Kazhdan--Lusztig polynomials, we know
that Kostka--Foulkes polynomials have nonnegative integer coefficients. This fact leads to one of the most important unsolved problems in combinatorial representation theory:

\begin{problem}
\label{problem}
Find a set $\cT(\lambda,\mu)$ and a combinatorial 
statistic
$\ch: \cT(\lambda,\mu) \to \Z_{\geq 0}$ such that the Kostka--Foulkes
polynomial $K_{\lambda,\mu}(q)$ is the generating function of
$\cT(\lambda,\mu)$ with respect to $\ch$. In other terms find
$\cT(\lambda,\mu)$ and $\ch$ such that

\begin{align}
\label{formulaintro}
K_{\lambda,\mu}(q) = \sum_{T \in  \cT(\lambda,\mu)}q^{\ch(T)}.
\end{align}
\end{problem}

Since $K_{\lambda,\mu}(q)$ is a  $q$-deformation of weight
multiplicities then $\#\cT(\lambda,\mu) = K_{\lambda,\mu}(1)$ is the
dimension of the $\mu$-weight space of the irreducible, finite
dimensional $\mathfrak{g}$-module of highest weight $\lambda$. In particular, in order to tackle
\Cref{problem} and find an appropriate set $\cT(\lambda,\mu)$, it
seems natural to seek for an object which parametrizes the aforementioned $\mu$-weight space of the irreducible, finite
dimensional $\mathfrak{g}$-module of highest weight $\lambda$. This
approach turned out to be very succesful in type $\AAA_{n-1}$, that is
when $\mathfrak{g} = \mathfrak{sl}(n,\mathbb{C})$. 
In this case dominant integral weights are identified with partitions of at most $n-1$ parts, and a natural candidate for $\cT(\lambda,\mu)$ is the set $\op{SSYT}(\lambda,\mu)$ of semistandard Young tableaux of shape $\lambda$ and weight $\mu$. 
In this context, Foulkes conjectured the existence of such a statistic
\cite{Foulkes1974}, which was explicitly found by Lascoux and
Sch\"utzenberger \cite{LascouxSchutzenberger1978}. They called their
statistic \emph{charge} (which explains our abbreviation $\ch$ used
also in the general context of arbitrary type)
and established the celebrated formula of \Cref{problem} in type $\AAA_{n-1}$.
Let us briefly describe this statistic. We start by defining the
charge statistic $\ch$ on \textit{standard words} in the alphabet
$\mathcal{A}_{n} = \{ 1, ..., n\}$, that is words where each $i \in
\mathcal{A}_{n}$ appears exactly once. Standard words are naturally
identified with permutations by setting $w = \sigma(1) \cdots
\sigma(n)$, where $\sigma \in \frak{S}(n)$ is a permutation. We define
$\ch(w)$ --- the charge of $w$ --- recursively:
\begin{enumerate}
\item set $c(1) = 0$,
  \item for $r \geq 2$, define $c(r) = c(r-1)$ if $\sigma^{-1}(r) <
    \sigma^{-1}(r-1)$, and $c(r) = c(r-1)+1$ otherwise,
    \item set $\op{ch}(w) = \sum_{i=1}^{n}c(i)$.
    \end{enumerate}
Let $w$ be a word in the alphabet
$\mathcal{A}_{n}$ such that the number of occurrences of $i+1$ in this
word is less
or equal to the number of occurrences of $i$ for each $i+1 \in
\mathcal{A}_{n}$. For such a word, we can extract its standard
subwords $w_{1}, ..., w_{m}$ as follows: the first subword $w_1$ of
$w$ is obtained by selecting the rightmost $1$ in $w$, then the
rightmost $2$ appearing to the left of the selected $1$, and so on
until there is no $k + 1$ to the left of the current value $k$ being
selected. At this point, we select the rightmost $k + 1$ in $w$ and
continue with the previous process until the largest value appearing
in $w$ is reached. The subword $w_1$ is obtained by erasing all the
letters from $w$ that were not selected and we proceed by selecting
$w_2$ by the same procedure performed on the word consisting of the
letters that were not selected so far. We continue, until no letters
are left. Finally, we will define $\ch(w)$ by setting $\op{ch}(w)  = \sum_{i=1}^{m}\op{ch}(w_i)$. One can show that $\op{ch}$ is constant on Knuth equivalent words (see
\cite[Proposition 2.4.21]{Butler1994}), which allows to define $\ch$
as a statistic on semistandard Young tableaux with partition weight. In practice, if $T \in
\op{SSYT}(\lambda,\mu)$ is a semistandard Young tableau of shape
$\lambda$ and weight $\mu$, one may define $\op{ch}(T)$ as
$\op{ch}(\op{w}(T))$, where $\op{w}(T)$ is its south-western row word \footnote{We warn the
  reader that we will work solely with north-eastern column words in
  the remaining sections of this text. However, to be consistent with
  the definition of the charge statistic on words
  \cite{LascouxSchutzenberger1978, Butler1994}, and to avoid reading
  words backwards, we prefer to stick to south-western row words to define charge.}
  
\begin{exa}
Let 
$T=
\Yboxdim{1cm}
\begin{tikzpicture}[font=\tiny, scale=0.3,baseline=0cm]
\tgyoung(0cm,1cm,<1><1><1><2><3>,<2><2><4>,<3><5>)
\end{tikzpicture}.$
The south-western row word $\op{w}(T)$ of $T$ is $3522411123$. From it we may
extract the subwords $\op{w}_{1} = 35241$, obtained as
$\widehat{3}\widehat{5}2\widehat{2}\widehat{4}11\widehat{1}23$ (we mark selected letters by
a hat), $\op{w}_{2} = 213$, obtained as
$\widehat{2}1\widehat{1}2\widehat{3}$, which finally gives $\op{w}_{3} = 12$. 
Their charges are $\op{ch}(\op{w}_{1}) = 2, \op{ch}(\op{w}_{2}) = 1$ and $\op{ch}(\op{w}_3) = 1$, respectively. 
Therefore $\op{ch}(T) = \op{ch}(3522411123) = 2+1+1 = 4$. 
\end{exa}

A thorough introduction to Kostka--Foulkes polynomials in type
$\AAA_{n-1}$ and the charge statistic from a purely combinatorial
point of view is carried out in \cite{Macdonald1995}. We  refer the
reader to \cite{Butler1994} for a beautiful exposition and proof of
(\ref{formulaintro}), which makes use of a recursive formula for
computing Kostka--Foulkes polynomials due to Morris
\cite{Morris1963}. The aforementioned recursive formula, in turn, is
deduced from a formula for Hall--Littlewood polynomials discovered by
Littlewood \cite{Littlewood1961}. It is worth mentioning here that there
are various generalizations of \cref{problem} in type $\AAA$ leading
to many open problems, see
\cite{Macdonald1988,LascouxLeclercThibon1997,Haiman2001,LapointeLascouxMorse2003,GrojnowskiHaiman2007,Dolega2019}
among others.
 \\

In this work, we focus on \Cref{problem} for type $\CC_{n}$, that is, in case of the symplectic Lie algebra $\mathfrak{g} = \mathfrak{sp}(2n,\mathbb{C})$. 
To the best of our knowledge this is the only case of \Cref{problem}
having an explicit conjectural solution, which was formulated by
Lecouvey in \cite{Lecouvey2005}. In this case, the dominant integral
weights $\lambda,\mu$ can again be identified with partitions of at
most $n$ parts, however there are several natural combinatorial
candidates for the set $\cT(\lambda,\mu)$ such as King tableaux
\cite{King1976}, De Concini tableaux \cite{DeConcini1979} or
Kashiwara--Nakashima tableaux \cite{KashiwaraNakashima1994} that we
also call \emph{symplectic tableaux}. The last model denoted
$\op{SympTab}_{n}(\lambda,\mu)$ will be of
particular importance in this paper as Lecouvey's conjecture is
formulated in terms of symplectic tableaux. These are defined to be
semistandard Young tableaux with some additional constraints (see \Cref{symptab} and
\cite{Lecouvey2005}) and entries in the ordered
alphabet \[\mathcal{C}_{n} = \{\overline{n} < \dots < \overline{1} <
  1 < \dots < n \}, \]
such that the shape of a tableau is given by $\lambda$ and its weight
by $\mu$. Here, the weight of a tableau with
entries in $\cC_n$ is defined slightly differently than the weight of a
tableau of type $\AAA_{n-1}$ and is given by the vector $(a_{\overline{n}}, ..., a_{\overline{1}})$, where $a_{\overline{i}}$
is the difference between the number of occurrences of
$\overline{i}$'s and $i$'s in $T$. Lecouvey defined a charge statistic
$\ch_{n}: \op{SympTab}_{n}(\lambda,\mu) \rightarrow \mathbb{Z}_{\geq
  n}$ by analogy to the situation in type $\AAA_{n-1}$. Before we
describe Lecouvey's construction, which might seem quite technical, let us first
recall this specific situation in type $\AAA_{n-1}$ to motivate the
reader\footnote{The cocyclage described here is the one used in
  Lecouvey's paper~\cite{Lecouvey2005}. Note that there are some other
  variants, see for instance~\cite{LascouxSchutzenberger1978,Butler1994,Lothaire2002}.}.
The idea is that there is a procedure, known as \textit{cocyclage}
and denoted $\op{CoCyc}_{\AAA}$, 
whose successive applications to any semistandard Young tableau
yield a tableau whose weight is equal to its shape,
see \Cref{sec:CyclageA} for details.
This defines a poset structure on the set of semistandard Young tableaux:
$T \to T'$ whenever $T' = \op{CoCyc}_{\AAA}(T)$. It is readily shown that whenever $T \to T'$ then
$\op{ch}(T') = \op{ch}(T) -1$. 
Moreover, one easily checks that if the shape and the weight of $T$ coincide, 
then $\op{ch}(T)=0$, therefore 
$\op{ch}(T)=k$ where $k\geq0$ is the smallest integer such that 
$\op{CoCyc}^{k}_{\AAA}(T)$ has weight equal to its shape.
This way, we can compute charge without referring to the
standard subwords. In \Cref{typeacyc} we show that the cocyclage 
poset in type $\AAA$ carries an additional structure in terms of unimodal compositions. 
This structure is ``lifted'' to type $\op{C}$, as is outlined in \Cref{mainintro} below. 
\\

Before we describe Lecouvey's conjectural solution to \Cref{problem}
involving cocyclage it is worth mentioning already proposed partial
solutions to \Cref{problem} in type $\CC_n$. In \cite{Lecouvey2006}
Lecouvey defined a certain statistic on the set of
Kashiwara--Nakashima tableaux $\op{SympTab}_{n}(\lambda,\mu)$ and he
showed that for some special pairs $(\lambda,\mu)$ his statistic
indeed gives the
correct answer for \Cref{problem}. However, he also showed that in
general, his statistic does not give the correct answer for this
problem. On the other hand the solution in the weight zero case has been given recently in \cite[Theorem 6.13]{LecouveyLenart2018},
using the aforementioned combinatorial model of $\cT(\lambda,\mu)$ called King tableaux. Their result relies on an interpretation of Kostka--Foulkes
polynomials in terms of generalized exponents which holds in this special case of weight zero. Notably, their formula relies in a deciding way on a formula for generalized exponents in terms of branching multiplicities, and their methods suggest explicit ways in which several branching rules \cite{Sundaram1990, Kwon2018, SchumannTorres2018} could be related to each other.

\subsection{Main result and methodology} \label{mainintro} In order to define the statistic \[\op{ch}_{n}: \op{SympTab}_{n}(\lambda,\mu) \rightarrow \mathbb{N}\] and formulate his conjecture, 
Lecouvey proceeded by analogy to the situation in type $\AAA_{n-1}$ described above. He used a symplectic version of column insertion, which he introduced in \cite{Lecouvey2002}, to define a symplectic cocyclage operation $\op{CoCyc}_{\CC}$ which transforms a symplectic tableau $T \in \op{SympTab}_{n}$ into a symplectic tableau $\Cyc_{\CC}(T) \in \op{SympTab}_{m}$ for 
$m\geq n$.
The statistic $\op{ch}_{n}$ is defined as follows. Let $T \in \op{SympTab}_{n}$ be a symplectic tableau. In \cite{Lecouvey2005}, Lecouvey showed that there exists a non-negative integer $m$ such that $\Cyc_{\CC}^{m}(T)$ is a column $C(T)$ of weight zero. We denote by $m(T)$ the smallest non-negative integer with this property. For a symplectic column $C$ of weight zero we set $E_{C}= \{ i \geq 1| i \in C, i+1 \notin C\}.$ The charge of $C$ is defined by

\[ \op{ch}_{n}(C) = 2 \underset{i \in E_{C}}{\sum} (n-i),\]
and the charge of an arbitrary symplectic tableau $T$ is defined by 

\[\op{ch}_{n}(T) = m(T) + \op{ch}_{n}(C(T)).  \]Lecouvey
\cite{Lecouvey2005} conjectured the following solution of \Cref{problem} in type $\CC_{n}$:
\begin{conj}
\label{conji}
Let $\mu,\lambda$ be partitions with at most $n$ parts. Then 
\begin{align}
\label{conj}
 K^{\CC_n}_{\lambda,\mu}(q) = \underset{T \in \op{SympTab}_{n}(\lambda,\mu)}\sum q^{\op{ch}_{n}(T)}.    
\end{align}
\end{conj}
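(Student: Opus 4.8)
The plan is to deduce \eqref{conj} from two matching recursions: a recursion for the symplectic Kostka--Foulkes polynomials $K^{\CC_n}_{\lambda,\mu}(q)$ that shrinks the pair $(\lambda,\mu)$, and a recursion for the charge generating function $\sum_{T\in\op{SympTab}_n(\lambda,\mu)}q^{\op{ch}_n(T)}$ on the right-hand side, the latter being controlled by the cocyclage poset of \cite{Lecouvey2005}. The induction will be on the size of $\lambda$ (equivalently of $\mu$), with the base case being columns of weight zero: there $\op{ch}_n$ of such a column $C$ is by definition $2\sum_{i\in E_C}(n-i)$, and one checks directly that this is the correct value of $K^{\CC_n}_{\lambda,\mu}(q)$.

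First I would fix the combinatorics of cocyclage in type $\AAA_{n-1}$ in the form needed. By the discussion above, $\op{ch}(T)$ is the number of steps of $\op{CoCyc}_{\AAA}$ bringing $T$ to a tableau whose weight equals its shape; the input of \Cref{typeacyc} is that this process, step by step, is encoded by a \emph{unimodal composition} attached to the reading word of $T$, from which both the next cocyclage step and the charge decrement are read off directly, with no recourse to the plactic/local rules that obstruct a direct analysis. This yields an algorithm for $\op{CoCyc}_{\AAA}$ and for charge that is manifestly free of local constraints, and hence a clean recursive handle on $\sum_{T}q^{\op{ch}(T)}$ in type $\AAA$ --- namely the classical fact that type $\AAA$ charge obeys Morris's recursion \cite{Morris1963}.

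Second --- the crux --- I would transport this description to type $\CC_n$. The symplectic column insertion of \cite{Lecouvey2002} underlying $\op{CoCyc}_{\CC}$ acts on reading words in $\mathcal{C}_n$, and the claim to establish is that, after the standard folding identification $\overline{i}\leftrightarrow i$, symplectic cocyclage is \emph{governed by type $\AAA$}: the orbit of $T$ under $\Cyc_{\CC}$ up to the weight-zero column $C(T)$, together with the integer $m(T)$, is computed by the type $\AAA$ algorithm of the previous step applied to a canonically associated word, with the rank inflation from $n$ to $m$ occurring along the orbit being exactly what dissolves the constraints. Granting this, $\op{ch}_n$ becomes an explicit purely combinatorial statistic, and $\sum_{T\in\op{SympTab}_n(\lambda,\mu)}q^{\op{ch}_n(T)}$ inherits a recursion mirroring the type $\AAA$ one, up to a bookkeeping term tracking the sign-change contribution $2\sum_{i\in E_C}(n-i)$ against the weight $\mu$ and the root-system $q$-weights. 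Comparing this with a Morris-type recursion for $K^{\CC_n}_{\lambda,\mu}(q)$ --- available in type $\CC_n$ in the spirit of Littlewood's and Lecouvey's identities \cite{Littlewood1961,Lecouvey2005} --- then closes the induction.

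The main obstacle is precisely this second step: proving that symplectic cocyclage is genuinely governed by type $\AAA$. Lecouvey's original cocyclage is hampered by local constraints because symplectic insertion can bump entries across the $\overline{i}\leftrightarrow i$ divide and produce cancellations, and tracking how these interact with the standard subwords is where the unimodal-composition model must do its work. It is also here that the restriction to a single row $\lambda=(\ell)$ is a genuine simplification, since then $\op{SympTab}_n((\ell),\mu)$ consists of weakly increasing words in $\mathcal{C}_n$ and the cocyclage dynamics are far more transparent; carrying the bookkeeping through arbitrary shapes, and reconciling it with the type $\CC_n$ recursion for general $\lambda$, is the step still to be completed, and is what we expect this approach to deliver.
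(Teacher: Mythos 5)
There is a genuine gap: your proposal does not prove the statement. The statement is Lecouvey's conjecture for arbitrary $\lambda,\mu$, and the step you yourself concede is ``still to be completed'' --- that symplectic cocyclage, after folding $\ov{i}\leftrightarrow i$, is governed by type $\AAA$ for arbitrary shapes, and that the resulting statistic matches a type $\CC_n$ Morris-type recursion for general $\lambda$ --- is precisely the open problem. The paper itself only establishes the row case $\lambda=(p)$ (\Cref{mainresultintro}) and explicitly leaves the general conjecture open, so what you have written is a research plan, not a proof of \Cref{conji}.

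Even restricted to the row case, your crux is asserted rather than argued, and it is exactly where all the work lies. In the paper, making ``governed by type $\AAA$'' precise requires the non-recursive description of symplectic column insertion (\Cref{prop:Insertion}), the explicit construction of the tableaux $T_k$ via \Cref{alg:content} (the partner/single bookkeeping and the distance $\delta_\alpha$ are the actual content of the folding you invoke), and the verification $\locins^{s}(T_{k,1})=T_{k,s+1}$ (\Cref{thm:crucial}), whose case-by-case proof occupies all of \Cref{appendix}; the unimodal-composition observation you lean on is only \Cref{typeacyc}, i.e.\ the identification of $\Cyc_{\AAA}$ with $\wshift$ on natural tableaux, which is the easy ingredient and does not by itself yield charge, Morris's recursion, or any control of the symplectic bumping across the barred/unbarred divide. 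Your endgame also differs from the paper's and is unspecified: the paper does not run an induction on matching recursions, but computes $m(T)$ in closed form (\Cref{numsteps}), reads off $E_{C(T)}$ from \Cref{alg:content}, obtains $\ch_n(T)=T_n(\mu)+\theta_n(T)$, and compares with Lecouvey's closed formula (\Cref{onerowlecouvey}); your ``bookkeeping term'' and mirrored recursion are never written down. Smaller structural slips point the same way: $|\lambda|$ and $|\mu|$ are not equal in type $\CC$ (their difference is a nonnegative even integer), and weight-zero columns are the terminal objects of the cocyclage poset rather than the base of an induction on $|\lambda|$, so the induction as set up does not parse. In short, the proposal reproduces the paper's philosophy (type $\CC$ cocyclage controlled by type $\AAA$ data) but supplies neither the row-case proof nor, a fortiori, the general case that the statement asserts.
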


Our main result reads as follows.

\begin{theorem}
\label{mainresultintro}
Let $\lambda = (p)$ and $\mu = (\mu_{\ov n},\dots,\mu_{\ov 1})$ be an
arbitrary partition. Then \Cref{conji} holds true:
\[K^{\CC_{n}}_{\lambda,\mu}(q) = \sum_{T \in
    \op{SympTab}_{n}(\lambda,\mu)} q^{\op{ch}_{n}(T)}. \]
\end{theorem}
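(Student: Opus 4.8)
The plan is to reduce the symplectic computation to a type $\AAA$ computation as much as possible, exploiting the new cocyclage algorithm announced in the abstract. First I would set up the combinatorial model: a symplectic tableau of shape $(p)$ is simply a word $c_1 \le \dots \le c_p$ in the alphabet $\cC_n$ subject to the symplectic row condition, and $\op{SympTab}_n((p),\mu)$ is indexed by such rows of content $\mu$. The first step is to give, for an arbitrary row $R$, an explicit description of the chain $R \to \op{CoCyc}_\CC(R) \to \op{CoCyc}_\CC^2(R) \to \dots$ and in particular of $m(R)$ and of the terminal weight-zero column $C(R)$; here is where the ``local-constraint-free'' reformulation of cocyclage (developed in the body of the paper, which I may assume) does the work: applied to a single row, each step either splits off a barred/unbarred pair via a symplectic jeu-de-taquin move or performs an ordinary type-$\AAA$ cyclage, and I would track this by bookkeeping the multiset of entries. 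The key identity to establish is that $m(R) + \op{ch}_n(C(R))$ equals an explicit closed expression in $\mu$ and in the $E$-set data of $R$.

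The second step is the representation-theoretic side. By Morris-type recursion, or via the Hall--Littlewood expansion specialized to a one-row shape $\lambda=(p)$, the polynomial $K^{\CC_n}_{(p),\mu}(q)$ admits a closed form; alternatively one can use the known expansion of the symplectic Hall--Littlewood function indexed by a single row, together with Littlewood's identity relating $\mathrm{Sp}$-characters and $\mathrm{GL}$-characters, to reduce $K^{\CC_n}_{(p),\mu}$ to a signed sum of type-$\AAA$ Kostka--Foulkes polynomials $K_{(p),\nu}(q)$ over compositions $\nu$ obtained from $\mu$ by pairing-cancellation. Since $\lambda=(p)$ is a single row, each $K_{(p),\nu}(q)$ is itself a single $q$-binomial-type generating function (the charge on rearrangements of a row word is classical and easy), so the right-hand side becomes a completely explicit $q$-series.

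The third step is the matching: I would show that the explicit $q$-series from step one (the sum over symplectic rows $T$ of $q^{\op{ch}_n(T)}$) agrees with the explicit $q$-series from step two. The natural way is to exhibit a weight-preserving, charge-preserving bijection between $\op{SympTab}_n((p),\mu)$ and the type-$\AAA$ objects (semistandard rows, or lattice-word data) counted on the other side, sending $\op{ch}_n$ to type-$\AAA$ charge up to the explicit shift $2\sum_{i\in E}(n-i)$ coming from $\op{ch}_n$ of the terminal column. This is precisely the ``governed by type $\AAA$'' phenomenon promised in the abstract, now made quantitative for rows. I would organize the bijection by first ``peeling off'' all barred/unbarred cancelling pairs from a symplectic row (recording their contribution to charge, which is forced and matches the $2(n-i)$ shifts), leaving a genuine type-$\AAA$ row whose charge is handled by Lascoux--Sch\"utzenberger.

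The main obstacle I expect is step one: proving that on a single row the new cocyclage coincides with (or is controlled by) the composition of an explicit sequence of elementary moves, and extracting $m(R)$ and $C(R)$ in closed form. Cocyclage is defined via symplectic column insertion, which globally reshuffles the tableau; even for a one-row input the intermediate tableaux need not be rows, so one must understand how the shape grows and collapses and verify that no hidden local obstruction appears. Controlling $\op{ch}_n$ along this chain — in particular checking $\op{ch}_n$ decreases by exactly $1$ at each cocyclage step, the analogue of the type-$\AAA$ fact recalled in the introduction — and reconciling the terminal-column correction term $2\sum_{i\in E_{C}}(n-i)$ with the accumulated shifts is the technical heart of the argument.
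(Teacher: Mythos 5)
Your high-level plan matches the paper: both rely on the new, non-recursive description of $\Cyc_{\CC}^k$ on rows (the technical heart, \Cref{thm:crucial} together with \Cref{cor:crucial}), use it to extract $m(T)$ and the terminal column $C(T)$ in closed form, and then compare with a Morris-type closed expression for $K^{\CC_n}_{(p),\mu}(q)$. However, your steps two and three take a longer route than the paper needs. You propose passing through Littlewood's $\mathrm{Sp}$--$\mathrm{GL}$ identity to write $K^{\CC_n}_{(p),\mu}$ as a signed sum of type-$\AAA$ Kostka--Foulkes polynomials, and then building a weight- and charge-preserving bijection from $\op{SympTab}_n((p),\mu)$ to type-$\AAA$ row data. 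The paper sidesteps both: Lecouvey's \Cref{onerowlecouvey} (his Proposition 3.2.3, itself a direct consequence of the Morris recurrence) already expresses $K^{\CC_n}_{(p),\mu}(q)$ as $q^{T_n(\mu)}\sum_{T}q^{\theta_n(T)}$, a sum over the \emph{same} index set $\op{SympTab}_n((p),\mu)$. So no bijection is required; it suffices to compute $\op{ch}_n(T)=m(T)+\op{ch}_n(C(T))$ explicitly in terms of the data $(k_1,\dots,k_n)$ and $\mu$ (using \Cref{numsteps} for $m(T)$ and the structure of $C(T)$ from \Cref{alg:content} for the terminal-column term) and verify by an elementary algebraic manipulation that $\op{ch}_n(T)=T_n(\mu)+\theta_n(T)$. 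Also note that the fact that $\op{ch}_n$ drops by exactly one at each authorized cocyclage step is immediate from the definition $\op{ch}_n(T)=m(T)+\op{ch}_n(C(T))$, so it is not an independent thing to check; the genuine difficulty, which you correctly identify, is proving the closed-form description of the iterated cocyclage itself.
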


A pivotal point in our methodology, and one which we expect will have impact on the study of the general case of \Cref{conji}, 
is a reformulation of Lecouvey's construction in the setting of
\Cref{mainresultintro} by
providing a new algorithm to compute $\Cyc^k_{\CC}(T)$ for
arbitrary integer $k$. The
big advantage of our approach is that in \Cref{alg:content}, which
completes this task, we are able to eliminate
local constraints which appear in the original construction in two
different contexts:
\begin{itemize}
  \item[$\bullet$] we need to compute $\Cyc^{k-1}_{\CC}(T)$ in order to compute
    $\Cyc^{k}_{\CC}(T)$;
    \item[$\bullet$] for each column of $\Cyc^{k-1}_{\CC}(T)$, we need to insert boxes recursively into
      consecutive subcolumns of size $2$.
    \end{itemize}
In order to free ourselves from the second constraint we give a formula for
inserting an entry into a whole column at once, which is given by \Cref{prop:Insertion}. 
Although more technical in appearance, our new definition allows us to
have a full grasp of the symplectic cocyclage procedure. We show in
\Cref{thm:crucial} that for a partition $\lambda = (p)$ which consists of one row
and for an arbitrary partition $\mu$ the symplectic tableau $\Cyc^{k}_{\CC}(T)$, where $T \in
\op{SympTab}_{n}(\lambda,\mu)$, is given by \Cref{alg:content}. The
main philosophy of \Cref{alg:content} is that in order to compute
$\Cyc^{k}_{\CC}(T)$, it is enough to only apply $\Cyc_{\AAA}$ to certain
standard Young tableaux and then apply a very simple function
which changes the entries of the outcome. \\

As an application, we are able to
compute $\op{ch}_{n}(T)$ directly from $T$ and, using
simple recurrence for Hall--Littlewood polynomials of type $\CC_{n}$ proved
by Lecouvey in \cite[Theorem 3.2.1.]{Lecouvey2005}, we deduce
\Cref{mainresultintro}. We believe that our strategy might lead
us to the solution of \Cref{conji} in full generality. Indeed, the restriction $\lambda = (p)$ is due to the
fact that symplectic tableaux of row shape coincide with semistandard
tableaux with entries in the alphabet $\mathcal{C}_{n}$ (see
\Cref{prop/def}). In particular, there exists a unique standard tableau of
shape $(p)$, and \Cref{alg:content} consists in applying $\Cyc_{\AAA}$ multiple
times to this unique tableau. It seems likely that in the more general
case there exists a ``right''
labelling of the boxes of any symplectic tableau $T$ of arbitrary shape,
such that a very similar procedure could be followed to compute $\Cyc_{\CC}^k(T)$
and therefore $\op{ch}_{n}$. So far, this question remains open and we will be investigating this in the future.
\\

\subsection{Organization of the paper. } In \Cref{preliminaries} we
introduce all the necessary combinatorial preliminaries to follow the rest of the paper. Moreover, in \Cref{typeacyc}, we show that the cocyclage 
in type $\AAA_{n-1}$ carries an extra structure in terms of unimodal compositions. In \Cref{typec}, we introduce necessary
combinatorics in type $\CC_{n}$, including the original definition of
insertion and its non-recursive form given by \Cref{prop:Insertion}. 
We also present the cocyclage algortithm of Lecouvey, the definition of the charge statistic on symplectic tableaux, 
and state his conjectural positive formula for symplectic Kostka--Foulkes polynomials. 
In \Cref{insandshift} we describe \Cref{alg:content} producing a
certain tableau which we show coincides with the tableau obtained from
a row tableau by performing the cocyclage operation $k$ times. We
conclude this section by proving Lecouvey's conjecture for $\lambda = (p)$
and arbitrary $\mu$ in \Cref{onerow}, which follows from our
algorithmic description. Since the proof of
our description of the cocyclage in type $\CC$ is the most technically
involved part of the paper, we present it in
a separate \cref{appendix}.

\section{Preliminaries}
\label{preliminaries}

\subsection{Tableaux}
\label{tableaux}
\emph{A composition} $\alpha \vDash l$ of \textit{size} $l \in \Z_{\geq 0}$ is a
sequence of non-negative integers $\alpha = (\alpha_1,\alpha_2,\dots)
\in \Z_{\geq 0}^{\Z_{>0}}$ such that $\sum_{i}\alpha_i = l$ and such
that $\alpha_i=0$ implies that
$\alpha_{i+1}=0$ for any $i \in \Z_{>0}$. 
In particular, there are only finitely many non-zero $\alpha_i$ and we denote their number by
$\ell(\alpha)$ calling it the \emph{length} of the composition $\alpha$. 
We will also use the notation $|\alpha| = l$. We denote the set of compositions of size $n$ by
$\Comp_l$, and we set $\Comp = \bigcup_{l}\Comp_l$. For any positive integer $i \in \Z_{>0}$ and for any
composition $\alpha \in \Comp_l$ we define a new composition
$\alpha-i$ as follows:
\[ \alpha-i = \begin{cases} \alpha &\text{ if $\alpha_j \neq i$ for
      all $j \in \Z_{>0}$};\\
 (\alpha_1,\dots,\alpha_{j-1},\alpha_{j+1},\dots) &\text{ where $j =\min\{k: \alpha_k = i\}$}.\end{cases}\]
For convenience we denote the unique composition $(0,0,\dots)$ of size $0$ by $0$. 
To any $\alpha \in \Comp_l$ we associate its diagram
defined by:
\[ \cD_\alpha = \{(i, j):1 \leq i \leq \alpha_{-j}, j \leq -1 \} \subset \mathbb{Z}_{>0} \times \mathbb{Z}_{<0}.\]
The elements of $\cD_\alpha$, referred to  and denoted as boxes, 
are linearly ordered by the so-called \emph{natural order}, 
starting from the north-western box and
reading boxes row by row from left to right. Formally, this is the
following variant of the lexicographic order: $(i_1,j_1) \leq (i_2,j_2)
\iff j_1 > j_2$ or $j_1 = j_2, i_1 \leq i_2$. 
For $c \in [1,|\al|]$ we denote the $c$-th box of
$\cD_\alpha$ in natural order by $\square_c$, or by $c$ when it does
not lead to confusion.

\begin{exa}
Let $\alpha = (3,1,2) \in \Comp_{6}$.  The set of boxes defined by $\mathcal{D}_{\alpha}$ is pictured below, where each $(i,j) \in \mathcal{D}_{\alpha}$ lies in its corresponding box. 
\begin{center}
\begin{tikzpicture}
  \draw[help lines] (0,0) grid (3,-3);
  \draw[thick] (0,0) -- (0,-3) -- (2,-3)-- (2,-2)--(1,-2)--(1,-1)--(3,-1)--(3,0)--(0,0);
\node[] at (0.5, -0.5) {\tiny{(1,-1)}};
\node[] at (1.5, -0.5) {\tiny{(2,-1)}};
\node[] at (2.5, -0.5) {\tiny{(3,-1)}};
\node[] at (0.5, -1.5) {\tiny{(1,-2)}};
\node[] at (0.5, -2.5) {\tiny{(1,-3)}};
\node[] at (1.5, -2.5) {\tiny{(2,-3)}};
\end{tikzpicture}
\end{center}

The elements of $\mathcal{D}_{\alpha}$ are ordered with respect to the natural order as:
\[ (1,-1) < (2,-1) < (3,-1) < (1,-2) < (1,-3)< (2,-3).\]
We will sometimes identify $\alpha$ and $\cD_\al$.
\end{exa} 

Let $(\calA,\prec)$ be a linearly ordered alphabet with minimal element $a$.
For any composition $\alpha \vDash l$ we define a \emph{tableau} $T$ of
shape $\alpha$ and entries in $\calA$ to be a filling of the boxes of
the diagram of $\alpha$ by elements from alphabet $\calA$. Formally, $T$ is a function 
\[T: \cD_\alpha \rightarrow \mathcal{A}.\] 
The \emph{content} of a tableau $T$ of shape $\alpha$ is the multiset of its entries.
When
$\calA$ is a countable ascending chain (with minimal element $a$), we say that a tableau has \emph{weight} $\beta
= (\beta_1,\beta_2,\dots)$ when its content is given by the multiset
\[
  \{a^{\beta_1},(a+_\succ)^{\beta_2},\dots,
  (a+_{\succ^k})^{\beta_{k+1}},\dots\},\] 
where $a+_{\succ}$ denotes the successor of $a$, and $a+_{\succ^{k+1}}= a+_{\succ^k}+_{\succ}$.
We
call a tableau \emph{semistandard} if for any pair of boxes lying in the
same row the content of the left box is smaller than or equal to the
content of the right box, and such that for any pair of boxes lying in
the same column the content of the upper box is smaller than the
content of the lower box, that is such that $T(i,j) \leq T(i+1,j)$ and
$T(i,j) < T(i,j-1)$. We call a tableau \emph{standard} if it is
semistandard of weight $\beta$ which is a \emph{column}, that is, $\beta = (1,1,\dots,1)$.

\begin{definition}
\label{def:NaturalTableau}
We call a tableau \emph{natural} if it is
semistandard and if it has the property that for boxes $a \leq b$ in the
natural order $T(a) \leq T(b)$.
\end{definition}

\begin{exa}
Let $\mathcal{A} = \left\{a, b,c \right\}$ with the linear order given by $a \prec b \prec c$, let $\alpha = (3,1,2)$ and $T = $
\Yboxdim{1cm}
\begin{tikzpicture}[font=\tiny, scale=0.3,baseline=0cm]
 \tgyoung(0cm,0.5cm,<a><a><b>,<b>,<c><c>)
\end{tikzpicture}, that is, $T((1,-1)) = a = T((2,-1)), T((3,-1))  = b
= T((1,-2)),$ and $T((1,-3)) = c =T((2,-3))$. Then $T$ is a
semistandard, natural tableau of shape $\alpha$ with entries in $\mathcal{A}$ and weight $(2,2,2)$.  
\end{exa}

We are particularly interested in compositions with some additional
properties. We call a composition $\alpha$ \emph{unimodal} if it is
unimodal as a sequence, that is there exists $j \in \Z_{>0}$ such that
$\alpha_1 \leq \cdots \leq \alpha_j \geq \alpha_{j+1} \geq \cdots$. A
\emph{partition} is a composition with non-increasing elements (in particular, partitions are unimodal). 
Its diagram is called a \emph{Young diagram}. A partition $\la$ of size $l$ is denoted by $\la \vdash l$. 
We denote the set of partitions of size $l$ by $\Part_l$ and $\Part =
\bigcup_l\Part_l$. 
Finally we denote the set of tableaux (semistandard and standard tableaux, respectively) of shape $\alpha$ and weight $\beta$ by
$\Tab_{\mathcal{A}}(\alpha,\beta)$ ($\SSTab_{\mathcal{A}}(\alpha,\beta), \STab_{\mathcal{A}}(\alpha)$,
respectively) and we denote by $\Tab_l(\mathcal{A}), \SSTab_l(\mathcal{A}), \STab_l(\mathcal{A}),$ ($\YTab_l(\mathcal{A}),
\SSYTab_l(\mathcal{A}), \SYTab_l(\mathcal{A}), $, respectively) the set of tableaux, semistandard
tableaux, standard tableaux (Young tableaux, semistandard
Young tableaux, standard Young tableaux, respectively) of size $l$, that is
\begin{align*}
\Tab_l(\mathcal{A}) &= \bigcup_{\alpha,\beta \vDash l}\Tab_{\mathcal{A}}(\alpha,\beta), \qquad \YTab_l(\mathcal{A}) = \bigcup_{\lambda \vdash l, \beta \vDash
  l}\Tab_{\mathcal{A}}(\lambda,\beta),\\
  \SSTab_l(\mathcal{A}) &= \bigcup_{\alpha,\beta \vDash l}\SSTab_{\mathcal{A}}(\alpha,\beta),
  \qquad  \SSYTab_l(\mathcal{A}) = \bigcup_{\lambda \vdash l,
  \beta \vDash l}\SSYTab_{\mathcal{A}}(\lambda,\beta),
  \\ \STab_l(\mathcal{A}) &= \bigcup_{\alpha\vDash l}\STab_{\mathcal{A}}(\alpha), \qquad
               \SYTab_l(\mathcal{A}) =
  \bigcup_{\lambda\vdash l}\SYTab_{\mathcal{A}}(\lambda).
\end{align*}

We will drop the index $l$ to denote the corresponding union over all positive integers $l$.
Moreover, when the alphabet is clear from the context, we will drop $\cA$ in these notations.

\subsection{Augmented tableaux}
\label{augmenteddefs}

An \textit{augmented composition} is 
the data of a composition $\al$ and a box $b=(i,j)$ in the diagram of $\al$, called the \textit{augmented box}.
In this case, the augmented composition $(\al,b)$ is also called an \textit{augmentation}
of  $\al$.
The diagram of $(\al,b)$ is defined as
$$\cD_{(\al,b)}=\cD_\al\setminus \{b\}\sqcup \{b_-,b_+\}$$
where $b_-=(i-1/2,j)$ and $b_+=(i+1/2,j)$, 
and is represented by the diagram of $\al$ in which box $b$ is split into two boxes $b_-$ and $b_+$.
In particular, $(\al,b)$ has $|\al|+1$ boxes, which are again totally ordered by the natural order,
and we have $b_-=\square_c$ and $b_+=\square_{c+1}$ for some label $c\in[1,|\al|+1]$.
We will call $b_-$ and $b_+$ the augmented boxes of $\al$.
\begin{exa}
The augmented composition $((1,3),(2,-2))$ has diagram 
\[\cD_\al=\left\{(1,-1),(1,-2),(3/2,-2),(5/2,-2),(3,-2)\right\},\] which is represented by
\Yboxdim{1cm}
\begin{tikzpicture}[font=\tiny, scale=0.3,baseline=0cm]
\tgyoung(0cm,1cm,<>,<><><>)
\draw (1,0) -- (2,1);
\end{tikzpicture}.
\end{exa}
In turn, an \textit{augmented tableau} $T$ is the filling of a diagram of an augmented composition by elements of $\cA$.
Formally, $T$ is a function $\cD_{(\al,b)}\ra\cA$.
An augmented tableau $T$ of shape $(\al,b)$ induces two regular tableaux $T_-$ and $T_+$ of shape $\al$ defined by
$$
\begin{array}{cc}
\begin{array}{rrcl}
T_-:&\cD_\al & \ra & \cA \\
&c&\mapsto&
\left\{
\begin{array}{ll}
T(c) & \text{ if } c\neq b \\
T(b_-) & \text{ if } c= b
\end{array}
\right.
\end{array}
&
\begin{array}{rrcl}
T_+:&\cD_\al & \ra & \cA \\
&c&\mapsto&
\left\{
\begin{array}{ll}
T(c) & \text{ if } c\neq b \\
T(b_+) & \text{ if } c= b
\end{array}
\right.
\end{array}
\end{array}
$$
\begin{rem}\label{aug_tab}
The augmented tableau $T$ is determined by the tableau $T_+$, the box $b$ and the entry $j\in\cA$ such that $T(b_-)=T_-(b)=j$. 
\end{rem}
We represent the augmented tableau $T$ 
by the tableau $T_-$ (or equivalently $T_+$) in which we replace box $b$ by the split box 
\Yboxdim{1cm}
\begin{tikzpicture}[font=\tiny, scale=0.4,baseline=0cm]
\tgyoung(0cm,0cm,<>)
\draw (0,0) -- (1,1);
\draw (0.3,0.7) node (a) {$j$};
\draw (0.7,0.3) node (b) {$i$};
\end{tikzpicture}, where $j$ is as in \Cref{aug_tab}.\\

For any (augmented) tableau $T$, we will denote its shape by $\shape(T) \in \op{Comp} \cup \op{Comp}^{+}$. 
For a composition $\al\vDash l$, we denote $\Tab^+_\cA(\al)$ the set of augmented tableaux of shape $\al^+$ 
for some augmentation $\al^+$ of $\al$ and weight $\be\vDash l+1 $, and we call $l$
the \textit{size} of $T\in\Tab^+(\al)$.
As before, we will denote the set of all augmented tableaux of size $l$ by 
\[\Tab^{+}_{l}(\cA) = \bigcup_{\alpha \vDash l} \Tab^+_\cA(\al) .\]

\subsection{Gravity}
\label{section:gravity}

Reordering the parts of a composition $\al\vDash l$ gives a partition $\la\vdash l$.
Note that $\la$ can be also seen as the result of lifting all the
boxes in each column of $\al$ so that after the lift, the boxes in the given column are lying in consecutive rows starting from the first row.
For this reason, we denote by $\grav$ the map $\Comp_l\to\Part_l, \al\mapsto\la$
and call it the gravity map.
This description induces a map $\Tab_l \to \YTab_l$ on tableaux, which
restricts to a map $\SSTab(\alpha,\beta) \to
\SSYTab(\grav(\alpha),\beta)$ and which we denote by the same symbol.
\begin{exa}
\Yboxdim{1cm}
We have
$
\grav \left(
\begin{tikzpicture}[font=\tiny, scale=0.3,baseline=0cm]
\tgyoung(0cm,1cm,<1>,<2><3>,<4><4><5><6>,<5>)
\end{tikzpicture}\right)
=
\begin{tikzpicture}[font=\tiny, scale=0.3,baseline=0cm]
\tgyoung(0cm,1cm,<1><3><5><6>,<2><4>,<4>,<5>)
\end{tikzpicture}
$.
\end{exa}

\subsection{Shifting}
\label{section:shifting}

Let $l\in\Z_{\geq0}$ and define $\shift: \Comp_l \to \Comp_l$
as follows 
\[ \shift(\alpha) = \begin{cases} \alpha &\text{ if $\alpha = (1^l,0,\dots)$ for
      some $l \in \Z_{\geq 0}$};\\
\alpha - e_i+e_{i+1} &\text{ otherwise};\end{cases}\]
where $e_i = (\underbrace{0,\dots,0}_{\text{$i-1$ times}},1,0\dots)$
and $i = \min\{j \mid \alpha_j = \max_k \alpha_k\}$.
Geometrically, it can be interpreted as removing the rightmost upper
box from a diagram $\alpha$ and adding a box at the end of the next
row. Note that $\shift$ clearly preserves the subset of
unimodal compositions.

\begin{exa} Let 
$\alpha = 
\begin{tikzpicture}[font=\tiny, scale=0.3,baseline=0cm]
\tgyoung(0cm,1cm,<>,<><><>,<>)
\end{tikzpicture}$
. We have
$\shift(\alpha)=
\begin{tikzpicture}[font=\tiny, scale=0.3,baseline=0cm]
\tgyoung(0cm,1cm,<>,<><>,<><>)
\end{tikzpicture}.$
\end{exa}

The shift operator induces a map on natural tableaux (see \cref{def:NaturalTableau}): 
given a natural tableau $T$ of shape $\alpha$, $\op{shift}(T)$ is the unique natural tableau of shape $\shift(\alpha)$ and same entries as $T$. 

\begin{exa} Take $\cA=\{1,2,3,4\}$ and let 
$T = 
\begin{tikzpicture}[font=\tiny, scale=0.3,baseline=0cm]
\tgyoung(0cm,1cm,<1>,<2><2><3>,<4>)
\end{tikzpicture}.$
We have
$\shift(T)=
\begin{tikzpicture}[font=\tiny, scale=0.3,baseline=0cm]
\tgyoung(0cm,1cm,<1>,<2><2>,<3><4>)
\end{tikzpicture}.$
\end{exa}

Given a composition $\alpha$ and a partition $\mu$, we are interested
in the following algorithm, which will produce a new composition. We
will apply the shift operator to $\alpha$, unless the maximum size of
its parts is equal to  $\mu_{1}$. If this comes to be the case, we
remove the first  part  of size $\mu_{1}$ from $\alpha$,  and we update
$\mu$ by removing $\mu_{1}$
from it. We repeat this procedure until the largest part of $\alpha$
and $\mu$ are different. This step of the procedure is formally
described by \cref{alg:red}. In \Cref{lem:ShiftTerminates} we show that this algorithm in fact terminates.  We think of our algorithm as repeated application of a \textit{weighted} shift operation. 

\begin{exa}
\label{weightedshift}
Let $\alpha = (3,2,1)$ and $\mu = (2,2)$. We first apply $\shift$ two times: 

$
\Yboxdim{1cm}
\shift^{2}
 \begin{tikzpicture}[font=\tiny, scale=0.3, baseline = 0.35cm]
\tgyoung(0cm,1cm,<><><>,<><>,<>)
\end{tikzpicture}  = \shift
 \begin{tikzpicture}[font=\tiny, scale=0.3, baseline = 0.35cm]
\tgyoung(0cm,1cm,<><>,<><><>,<>)
\end{tikzpicture}  = 
 \begin{tikzpicture}[font=\tiny, scale=0.3, baseline = 0.35cm]
\tgyoung(0cm,1cm,<><>,<><>,<><>) = 
\end{tikzpicture}
$,
which is the minimal number of shifts of $\alpha$ to obtain a
composition whose maximum part is equal to $2 = \mu_{1}$. At
this step we remove the first part of $\shift^2(\alpha)$, which is the first part of size $2$, to obtain 
\Yboxdim{1cm}
 \begin{tikzpicture}[font=\tiny, scale=0.3, baseline = 0.35cm]
\tgyoung(0cm,1cm,<><>,<><>) = 
\end{tikzpicture}, and we update $\mu = (2)$. Since the largest
parts of $\alpha$ and $\mu$ are still equal, we remove them again to
obtain $\alpha = (2)$ and $\mu = \emptyset$. This part of the algorithm corresponds to
$\op{simp}(\shift^2(\alpha),\mu)$ given by \cref{alg:red}. We can now
apply $\shift$ to $\alpha$ to obtain 
$
\Yboxdim{1cm}
\shift
 \begin{tikzpicture}[font=\tiny, scale=0.3, baseline = 0.35cm]
\tgyoung(0cm,1cm,<><>)
\end{tikzpicture}
=
 \begin{tikzpicture}[font=\tiny, scale=0.3, baseline = 0.35cm]
\tgyoung(0cm,1cm,<>,<>)
\end{tikzpicture}
$, which finishes our algorithm since columns are by definition fixed
points for $\shift$. Therefore our algorithm terminates after $3$
applications of the weighted shift operator.
\end{exa}

We now give a formal definition of our algorithm. We first define the operator

\begin{align*}
\op{simp}:\Comp\times\Part
\to \Comp\times\Part
\end{align*}
recursively as follows. 

\begin{algorithm}[H]
\caption{Defining $\op{simp} (\alpha,\mu)$.}
\label{alg:red}
\begin{algorithmic} 
\Require A partition $\mu$ and a composition $\alpha$.
\Ensure A pair $(\beta,\nu) \in \Comp\times\Part$.\\
$\beta = \alpha$\\
$\nu = \mu$
\While{$\max \beta_{k} = \nu_{1}$}
  \State $\nu = \nu \setminus \nu_{1}$
  \State $\beta = \beta \setminus
   \max \beta_{k}$
\EndWhile
\end{algorithmic}
\end{algorithm}

Note that $\op{simp}$ corresponds to a succesive removal of the largest
parts in $\alpha$ and $\mu$ until they are different. 
Now, each step of our weighted shift algorithm may be described by the operator:

\[ \wshift(\alpha,\mu) = \begin{cases} (\shift(\alpha),\mu) &\text{ if
      $(\alpha,\mu) =  \op{simp} (\alpha,\mu)$};\\
 \left(\shift\big(\op{simp} (\alpha,\mu)_1\big), \op{simp}(\alpha,\mu)_2\right) &\text{ otherwise};\end{cases}\]
where $\op{simp} (\alpha,\mu)_i$ denotes the $i$-th coordinate of $\op{simp} (\alpha,\mu)$.

\begin{rem}
Note that  $\wshift(\alpha,0) = (\shift(\alpha),0)$. 
\end{rem}

As in the case of $\shift$, the map $\wshift$ induces a map
on the set of tableaux whose weight is a partition, which we denote by the same symbol.
More precisely, if $\al$ is the shape of $T$ and $\mu$ its weight,
the shape of $\wshift(T)$ is $\wshift(\al,\mu)_1$
and the weight of $\wshift(T)$ is $\wshift(\al,\mu)_2$.

\begin{exa}\label{ex:wazne} Take $\cA=\{1,2,3,4\}$ and
$T= 
\begin{tikzpicture}[font=\tiny, scale=0.3,baseline=0cm]
\tgyoung(0cm,1cm,<1><1><1>,<2><2><3>,<4>)
\end{tikzpicture},$
so that $\al=(3,3,1)$ and $\mu=(3,2,1,1)$.
Then
$\wshift(T)=
\begin{tikzpicture}[font=\tiny, scale=0.3,baseline=0cm]
\tgyoung(0cm,1cm,<1><1>,<2><3>)
\end{tikzpicture}.$
\end{exa}

\begin{lemma}
\label{lem:ShiftTerminates}
For any pair $(\alpha,\mu) \in \Comp\times\Part$ there exists an
integer $m$ and a partition $\nu$ such that $\wshift^{m}(\alpha, \mu) = ((1^l), \nu)$ and is a fixed
point of $\wshift$ (for some $l
\geq 0$), that is $\nu_1 \neq 1$.
\end{lemma}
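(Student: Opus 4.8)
The plan is to exhibit a monovariant for $\wshift$ valued in a well-founded set, so that the orbit of $(\alpha,\mu)$ must stabilize, and then to check that the only fixed points have the asserted form. Recall that one step of $\wshift$ affects the shape in one of two ways: either it applies $\shift$ (the case $(\alpha,\mu)=\op{simp}(\alpha,\mu)$), or it first runs $\op{simp}$ — greedily deleting from $\alpha$ a part of each size $s$ for which $\mu$ still has a part of size $s$ — and then applies $\shift$ to the result. Two quantitative facts drive everything. First, $\shift$ preserves the number of boxes $|\alpha|$, and whenever $\alpha$ is not a column it increases $N(\alpha):=\sum_{j\ge 1} j\,\alpha_j$ by exactly $1$, since by definition $\shift$ moves one box from row $i$ to row $i+1$. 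Second, each iteration of the while-loop defining $\op{simp}$ strictly decreases $|\beta|$ (it removes a part of positive size), so $\op{simp}$ is well-defined, and when it acts nontrivially one has $|\op{simp}(\alpha,\mu)_1|<|\alpha|$.

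Next I would record the a priori bound $N(\alpha)\le \binom{|\alpha|+1}{2}$, with equality if and only if $\alpha=(1^{|\alpha|})$. This is elementary: among the $|\alpha|$ boxes of $\cD_\alpha$, the $\ell(\alpha)$ boxes in the first column occupy rows $1,\dots,\ell(\alpha)$ and contribute $\binom{\ell(\alpha)+1}{2}$, while each of the remaining $|\alpha|-\ell(\alpha)$ boxes lies in a row $\le\ell(\alpha)$; summing and using $\binom{l+1}{2}-\binom{\ell+1}{2}-\ell(l-\ell)=\binom{l-\ell+1}{2}\ge 0$ with $l=|\alpha|$ gives the claim, equality holding exactly when $\ell(\alpha)=|\alpha|$. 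Combining with the two facts above, I set $\Phi(\alpha,\mu)=\bigl(|\alpha|,\ \binom{|\alpha|+1}{2}-N(\alpha)\bigr)\in\Z_{\ge 0}^2$ with the lexicographic order, and claim $\Phi$ strictly drops under any non-stabilizing step: in the case $(\alpha,\mu)=\op{simp}(\alpha,\mu)$ the shape is not a column (otherwise $\wshift$ fixes $(\alpha,\mu)$), so $|\alpha|$ is unchanged while $N$ increases, lowering the second coordinate; in the other case $\op{simp}$ is nontrivial, so $|\alpha|$ — hence the first coordinate — strictly decreases.

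Since $(\Z_{\ge 0}^2,<_{\mathrm{lex}})$ is well-founded, the sequence $(\alpha,\mu),\wshift(\alpha,\mu),\wshift^2(\alpha,\mu),\dots$ cannot decrease forever, so $\wshift^{m}(\alpha,\mu)$ is a fixed point for some $m$. To identify it: if $\max_k\beta_k=\nu_1$ for a pair $(\beta,\nu)$ then $\wshift(\beta,\nu)$ invokes $\op{simp}$, which strictly shrinks $\nu$, so $(\beta,\nu)$ is not fixed; hence a fixed point has $\max_k\beta_k\ne\nu_1$, so $\wshift(\beta,\nu)=(\shift(\beta),\nu)$, and this equals $(\beta,\nu)$ precisely when $\shift(\beta)=\beta$, i.e.\ when $\beta=(1^l)$. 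For $l\ge1$ this gives $\max_k\beta_k=1$, hence $\nu_1\ne1$; in the degenerate case $l=0$ one has $\nu_1\ne0$, and since $\wshift$ preserves $|\alpha|-|\mu|$ (which is $0$ for the shape/weight pair of a tableau) one gets $\nu=\emptyset$ there as well. Thus $\wshift^m(\alpha,\mu)=((1^l),\nu)$ with $\nu_1\ne1$, as required.

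A word on difficulty: the argument is short, and the only real content is pinning down a \emph{bounded} monovariant for $\shift$ alone — it is immediate that $\shift$ keeps increasing $N$, but one needs the inequality $N\le\binom{|\alpha|+1}{2}$ with its sharp equality case to conclude termination and to recognize columns as the terminal shapes. I expect the remaining care to go into the bookkeeping for $\op{simp}$ (its loop terminating, via the $|\beta|$-monovariant) and the degenerate empty-composition cases, rather than into any substantial obstruction.
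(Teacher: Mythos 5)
Your proof is correct and uses a genuinely different monovariant from the paper's. The paper introduces a custom partial order on $\Comp\times\Part$ built from $\mu$, the maximal part $\max_k\alpha_k$, and the leftmost index achieving it, and argues termination via strict decrease inside the (finite) orbit; the identification of the fixed points is then the same as yours. You instead track $(|\alpha|,\,\binom{|\alpha|+1}{2}-N(\alpha))$ with $N(\alpha)=\sum_j j\,\alpha_j$, exploiting that $\shift$ preserves $|\alpha|$ and increases $N$ by exactly $1$, together with the sharp bound $N(\alpha)\le\binom{|\alpha|+1}{2}$ attained only by columns. This buys a more transparent and, in fact, more robust termination argument: the paper's claimed decrease $(\alpha,\mu)>\wshift(\alpha,\mu)$ in its ordering breaks down when the first maximal part of $\alpha$ is followed by an equal part --- for instance $\shift(3,3)=(2,4)$ raises $\max_k\alpha_k$ from $3$ to $4$, so $((3,3),\mu)\not>((2,4),\mu)$ in the paper's order --- whereas $N$ increases by $1$ with no exceptions, so your argument needs no repair. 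The one wrinkle is the $l=0$ aside: appealing to conservation of $|\alpha|-|\mu|$ ``for the shape/weight pair of a tableau'' does not address the statement as written for \emph{arbitrary} $(\alpha,\mu)$, and in fact the conclusion $\nu_1\ne1$ fails literally if $\alpha$ is exhausted while $\nu$ still has a part equal to $1$ (try $(\alpha,\mu)=((1),(1,1))$). That degenerate case is silently excluded in the paper as well and never arises in the application (there $|\alpha|\ge|\mu|$, which propagates), so you should flag the hypothesis $|\alpha|\ge|\mu|$ rather than lean on the tableau interpretation.
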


\begin{proof}
We define some variation of the lexicographic order $\geq_{lex}$ on $\Comp\times\Part$ as follows:
$(\alpha,\mu) > (\beta,\nu)$ if and only if $\mu \geq_{lex} \nu$ and
$\max_k\alpha_k > \max_k\beta_k$ or $\max_k\alpha_k = \max_k\beta_k = s$
and $\min\{j:\alpha_j = s\} < \min\{j:\beta_j = s\}$. Now,
notice that 
\begin{itemize}
\item for any pair $(\alpha,\mu) \in \Comp\times\Part$, we have
  $(\alpha,\mu)>\wshift(\alpha,\mu)$ or $\wshift(\alpha,\mu) =
  (\alpha,\mu)$;
\item
for any pair $(\alpha,\mu) \in \Comp\times\Part$, we have $|\wshift(\alpha,\mu)| \leq
|(\alpha,\mu)|$, where $|(\alpha,\mu)| = |\alpha|+|\mu|$. 
\end{itemize}
In particular the set $\{\wshift^k (\alpha,\mu): k \in \Z_{\geq 0}\}$ is finite, 
and there exists $k \in \Z_{\geq 0}$ such that $\wshift^{k+1}(\alpha,\mu) =
\wshift^{k}(\alpha,\mu)$. But the only fixpoints of $\wshift$ are of the form
  $((1^l),\nu)$ for some $l \leq |\alpha|$ and $\nu_1 \neq 1$, which
  follows immediately from the definition of $\wshift$. The proof is concluded.
\end{proof}

We define 

\begin{equation}
  \label{eq:minimal}
  m_{\mu}(\alpha) = \min \{m | \wshift^{m+1}(\alpha,\mu) =
  \wshift^{m}(\alpha,\mu)\}.
  \end{equation}

\begin{corollary}
\label{numsteps}
In the special case $\alpha = (p), |\mu|\leq p$ we have 
\[ m_{\mu}(\alpha) = \sum_i(i-1)\mu_i + \frac{(p-|\mu|)(p-|\mu| + 2\ell(\mu)-1)}{2}.\]
\end{corollary}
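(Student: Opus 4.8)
The plan is to compute $m_\mu((p))$ by unravelling the definition of $\wshift$ and keeping track of how many times we apply $\shift$ between successive calls to $\op{simp}$. First I would set up the bookkeeping: starting from the composition $(p)$ (a single row of length $p$) and the partition $\mu = (\mu_1, \dots, \mu_{\ell})$ with $\ell = \ell(\mu)$, I would describe the trajectory of $\wshift$ explicitly. Since $(p)$ has a unique part of maximal size $p > \mu_1$ (using $|\mu| \leq p$, so $\mu_1 \leq p$; the case $\mu_1 = p$ forces $\mu = (p)$ and should be checked separately or handled by the same formula), the first few steps are ordinary $\shift$ moves. I would track the shape as it evolves: repeatedly applying $\shift$ to $(p)$ produces $(p-1,1)$, then $(p-2,1,1)$, \dots, i.e. a hook, and then the hook starts filling columns. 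The key observation is that $\op{simp}$ only fires when the maximal part of the current shape equals the current $\mu_1$, at which point that part is deleted and $\mu_1$ is stripped off $\mu$.

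Next I would organise the computation as a sum over the parts of $\mu$, processed in decreasing order of index (so $\mu_1$ first, then $\mu_2$, etc., since $\op{simp}$ removes the largest). For each $i$, the passage from "having removed $\mu_1, \dots, \mu_{i-1}$" to "removing $\mu_i$" costs a certain number of weighted-shift steps, namely the number of $\shift$ applications needed to bring the current maximal part down to $\mu_i$, and the contribution turns out to be related to $(i-1)\mu_i$ because each earlier removal has left behind columns whose presence forces extra shifts before the new maximal part reaches $\mu_i$. Summing these contributions over $i$ gives the $\sum_i (i-1)\mu_i$ term. After all parts of $\mu$ have been consumed, we are left with the composition that remains after removing rows of lengths $\mu_1, \dots, \mu_\ell$ from the ``shifted'' picture, which has total size $p - |\mu|$, and from there $\wshift$ acts as plain $\shift$ (since $\mu = 0$ now, by the Remark after the definition of $\wshift$) until we reach a single column $(1^l)$. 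The number of $\shift$ steps from a given shape of size $s$ to the column $(1^s)$ is the ``total shift distance'', which is a standard count: it equals $\sum_{\text{boxes}} (\text{row index} - 1)$ for the target column minus the same for the source, and in the relevant configuration it evaluates to $\frac{(p-|\mu|)(p-|\mu| + 2\ell(\mu) - 1)}{2}$. I would verify this by identifying the precise shape left after $\op{simp}$ has finished — it should be a diagram with $\ell(\mu)$ columns coming from the removed rows being ``below'' and the remaining $p - |\mu|$ boxes arranged in a predictable staircase-like shape — and then computing the shift distance to $(1^{l})$ directly.

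Concretely, I would prove two lemmas: (1) a description of the sequence of shapes visited, showing that $\op{simp}$ is invoked exactly $\ell(\mu)$ times (once per part of $\mu$) and identifying the shape immediately before and after each invocation; (2) a formula $\sum_{(i,j) \in \cD_\alpha}((-j) - 1)$ for the number of $\shift$ steps needed to turn $\alpha$ into a single column, i.e.\ the statistic $\sum_k \binom{\alpha^{\mathrm{T}}_k}{2}$ evaluated appropriately — here using that $\shift$ decreases this statistic by exactly $1$ at each non-fixed step, which is immediate from the geometric description of $\shift$ (move the rightmost top box down one row). Combining (1) and (2), and carefully accounting for the boxes removed by $\op{simp}$ (which do not contribute shift steps but do shift the ``baseline'' for the remaining boxes, explaining the $2\ell(\mu) - 1$ cross-term), yields the claimed closed form. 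Finally I would sanity-check against \Cref{weightedshift}-type small cases, e.g.\ $\alpha = (3)$, $\mu = \emptyset$ giving $m = \binom{3}{2} = 3$, and a case with $\mu \neq \emptyset$.

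The main obstacle I anticipate is lemma (1): tracking the exact shapes through the interleaved $\shift$ and $\op{simp}$ operations requires some care, because after $\op{simp}$ removes a row the maximal part can jump and the shift dynamics ``restart'' from a shape that is no longer a single row. One has to be confident that $\op{simp}$ fires in a clean left-to-right / largest-first order and that no spurious extra invocations occur, and one must pin down exactly which columns survive so that the cross-term $2\ell(\mu) - 1$ comes out correctly rather than, say, $2\ell(\mu)$ or $2\ell(\mu) - 2$; the accounting of how removed rows of total length $|\mu|$ interact with the $(p - |\mu|)$ surviving boxes is where an off-by-one error is most likely to creep in.
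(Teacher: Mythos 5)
Your proposal rests on the same key observation as the paper's proof: the statistic $\sum_{\text{boxes}}(\mathrm{row}-1)$ of the current shape changes by exactly one at each non-fixed $\shift$ step, and $\op{simp}$ fires exactly $\ell(\mu)$ times, each time stripping the first (longest) row. The paper packages the bookkeeping a little differently — it fills a tableau of the current shape with entry $i-1$ in row $i$ and tracks the total content, so the number of shift steps is simply (final content) $+$ (content removed by reductions) $-$ (initial content), which hands you $\sum_i(i-1)\mu_i$ and the staircase term at once and sidesteps the segment-by-segment accounting and off-by-one worries you flag.
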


\begin{proof}
In order to compute $m_{\mu}(\alpha)$, we need to shift the diagram
$(p)$ as many times as we need to obtain a column shape, remembering
that whenever we obtain a shape $\beta$ such that $\mu_i = \max_k\
\beta_k$, we erase the longest row (which we call reduction) and then we apply shift operator to
a new shape. In this case, this longest row is the first row of
$\beta$, which is a direct consequence of the proof of
\Cref{lem:ShiftTerminates}. Consider a tableau of shape $\alpha$
filled by numbers in a way that all the entries in $i$-th row are
$i-1$. Notice that the difference between the sum of the contents of this
tableau of shape $\shift \alpha$ and the sum of the contents of this
tableau of shape $\alpha$ is equal to $1$. In particular, since we were erasing (during reduction) rows of length $\mu_i$ filled by $i-1$, we obtain at the end a column of length $p-|\mu|$ filled by consecutive entries starting from $\ell(\mu)$ (we performed reduction precisely $\ell(\mu)$ times). Therefore
\begin{multline*} 
m_{\mu}(\alpha) = \sum_i(i-1)\mu_i + \sum_{1 \leq i \leq p-|\mu|}(\ell(\mu)+i-1) \\
= \sum_i(i-1)\mu_i + \binom{p-|\mu|+\ell(\mu)}{2} - \binom{\ell(\mu)}{2} \\
= \sum_i(i-1)\mu_i +\frac{(p-|\mu|)(p-|\mu| + 2\ell(\mu)-1)}{2}.
\end{multline*}
\end{proof}

Finally, define a local shift operator 
\[ \localshift: \Comp_l^+ \cup \Comp_l \rightarrow \Comp_l^+ \cup \Comp_{l+1}\]
by shifting the split box, if it exists, onto the next column if there is a next column (hence preserving
the augmented shape),
and by replacing the split box by a normal box and putting another box to its right otherwise. For a composition $\alpha \in \Comp_l$, we define $\localshift(\alpha)$  as the augmented composition obtained by removing the rightmost upper box from the diagram of $\alpha$ and by splitting the first box in the next row.

\begin{lemma}
\label{lemma:locshift}
Let $\alpha \in \Comp_l$ be a unimodal composition, let $j = \min\{i \mid \alpha_i = \max_k \alpha_k\}$ and $r = \alpha_{j+1}$. Then 

\[ \shift(\alpha) = \localshift^{r+1}(\alpha).\]

\end{lemma}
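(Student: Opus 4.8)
The plan is to unwind the definition of $\localshift$ on a unimodal composition $\alpha$ and track where the split box travels, comparing the net effect of $r+1$ iterations with the single operation $\shift$. Write $j = \min\{i \mid \alpha_i = \max_k \alpha_k\}$ for the index of the first longest row, and $r = \alpha_{j+1}$ for the length of the row immediately below it (so $r \leq \alpha_j$, and in fact $r < \alpha_j$ unless there are several longest rows, in which case $r = \alpha_j$). Recall that $\shift(\alpha)$ removes the rightmost box of row $j$ and appends a box to row $j+1$; I want to see this as the cumulative outcome of local shifts.

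First I would describe $\localshift(\alpha)$ explicitly: by definition, for a genuine (non-augmented) composition it removes the rightmost upper box — that is, the last box of row $j$ — and splits the \emph{first} box of row $j+1$. So after one application we have an augmented composition $\beta^{(1)}$ whose underlying shape has row $j$ shortened by one and whose split box sits in position $(1,-(j+1))$. Next, on an augmented composition $\localshift$ slides the split box one column to the right within its row, \emph{unless} there is no further column in that row, in which case it de-augments (turns the split box into an ordinary box) and appends a box to its right. The key observation is that row $j+1$ originally has $r$ boxes, so after the initial de-augmentation-and-split the split box is at column $1$ of row $j+1$, and it takes $r-1$ further applications to walk it from column $1$ to column $r$ (the last existing column of that row); the $(r+1)$-st application then finds no column $r+1$, so it replaces the split box by an ordinary box at column $r$ and appends a box at column $r+1$. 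Counting: one application to create the split in row $j+1$, then $r-1$ applications to move it to the end, then one more to de-augment and add the new box — total $1 + (r-1) + 1 = r+1$. The net effect on shapes is exactly: row $j$ lost its last box, row $j+1$ gained one box at the end — which is precisely $\shift(\alpha)$.

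The one point requiring care is the degenerate behaviour of $\localshift$ at the boundary and the unimodality hypothesis. If $\alpha$ is a single column, then $j$ is the bottom row, $r = \alpha_{j+1} = 0$, and the claimed identity reads $\shift(\alpha) = \localshift(\alpha)$ — but $\shift$ fixes columns while $\localshift$ genuinely lengthens the diagram by a box, so one must check that the statement is only being applied in the intended range; indeed when $r=0$ the first application of $\localshift$ already de-augments (row $j+1$ is empty, so there is no column to split, and the rule "replace the split box by a normal box and put another box to its right" applies immediately), giving $r+1 = 1$ application, and one should verify this still matches $\shift$ under the convention in force, or note that $\shift$ on a non-column unimodal $\alpha$ never has $r = \alpha_j$ forcing an empty next row. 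More substantively, unimodality guarantees that rows $j$ and $j+1$ are the only rows whose lengths change and that $r \le \alpha_j$, so that the split box, while walking through row $j+1$, never exits the diagram prematurely and the shape stays a legitimate (augmented) composition throughout; without unimodality the intermediate augmented shapes and the final shape need not be compositions in the sense defined (the zero-implies-zero condition could fail).

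I would therefore organize the write-up as: (i) set up notation and recall the two operators; (ii) prove by a short induction on $s = 1, \dots, r$ that $\localshift^{s+1}(\alpha)$ is the augmented composition with row $j$ shortened by one box and the split box in column $s$ of row $j+1$ — wait, more precisely $\localshift^{s}$ for $1 \le s \le r$ places the split in column $s$ — handling the base case $s=1$ from the definition of $\localshift$ on a plain composition and the inductive step from the column-sliding rule; (iii) apply $\localshift$ once more to the $s=r$ case and invoke the "no next column" clause to de-augment, obtaining precisely $\shift(\alpha)$ by comparison of diagrams. The main obstacle I expect is purely bookkeeping: pinning down the indexing conventions for $\localshift$ on augmented versus non-augmented inputs (the definition treats the two cases asymmetrically) and making sure the count $r+1$ is exactly right rather than off by one at either the "create the split" end or the "de-augment and append" end. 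Everything else is a direct diagram chase.
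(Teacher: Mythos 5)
Your proposal is correct and takes essentially the same approach as the paper: apply $\localshift$ once to shorten row $j$ by a box and place the split in column $1$ of row $j+1$, walk the split to column $r$ with $r-1$ further applications, and use one last application to de-augment and append, giving $\alpha - e_j + e_{j+1} = \shift(\alpha)$ after exactly $r+1$ steps. One small slip in your degenerate-case discussion: for a column $(1^\ell)$ with $\ell\geq 2$ you have $j=1$ (the \emph{top} row, not the bottom one) and $r=\alpha_2=1$, not $0$; in any case the lemma is only invoked in \Cref{typeacyc} when cocyclage is authorized, which rules out columns, and the paper's own proof is equally silent about that boundary.
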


\begin{proof}
By definition, $\localshift(\alpha)$ is an augmentation of $\alpha - e_{j}$, where 
$e_j = (\underbrace{0,\dots,0}_{\text{$j-1$ times}},1,0\dots)$
and $j = \min\{i \mid \alpha_i = \max_k \alpha_k\}$. 
Then the augmented boxes of $\localshift^{r}(\alpha)$ will lie precisely in the last column and in row $j+1$. Therefore 

\[\localshift^{r+1}(\alpha) = \alpha -e_{j}+e_{j+1} = \shift(\alpha),\]

as desired. 
\end{proof}

\begin{exa}
$
\Yboxdim{1cm}
\localshift^3
\begin{tikzpicture}[font=\tiny, scale=0.3,baseline=0cm]
\tgyoung(0cm,1cm,<>,<><><><>,<><>)
\end{tikzpicture}
=
\localshift^2
\begin{tikzpicture}[font=\tiny, scale=0.3,baseline=0cm]
\tgyoung(0cm,1cm,<>,<><><>,<><>)
\draw (0,-1) -- (1,-0);
\end{tikzpicture}
=
\localshift
\begin{tikzpicture}[font=\tiny, scale=0.3,baseline=0cm]
\tgyoung(0cm,1cm,<>,<><><>,<><>)
\draw (1,-1) -- (2,-0);
\end{tikzpicture}
=
\begin{tikzpicture}[font=\tiny, scale=0.3,baseline=0cm]
\tgyoung(0cm,1cm,<>,<><><>,<><><>)
\end{tikzpicture}.
$
\end{exa}

Just as is the case of $\shift$, the map $\localshift$ 
naturally induces a map on augmented natural tableaux, which we denote by the same symbol.

\subsection{Cocyclage in type $\AAA_{n-1}$}
\label{sec:CyclageA}

The \textit{north-eastern column word} 
$w(T)$ of a tableau $T$ is obtained from $T$ by reading its entries, column-wise, from right to left and top to bottom.
In the rest of this section, fix $n\in\Z_{\geq 0}$ 
and consider the type $\AAA_{n-1}$ alphabet $\cA_n=\{ 1,\dots,n \}$.
Following \cite{LascouxSchutzenberger1978}, we define the \textit{cocyclage} of semistandard Young tableau as follows.
Let $T$ be a semistandard Young tableau such that no letter of $\cA_n$ appears in all columns.
In this case, we say that the cocyclage is \textit{authorized} for $T$.
We set $\Cyc_{\AAA} (T) = x \rightarrow T'$, where $T'$ is the unique semistandard Young tableau such that $w(T') \equiv u$ and $w(T) = xu$ for a word $u$ and a letter $x \neq 1$, and where $\equiv$ 
is the congruence relation generated by the plactic relations, see \cite{Lothaire2002}, 
and $*\to U$ is the column Schensted insertion of the letter $*\in\cA$ into the semistandard Young tableau $U$.

\begin{exa}
Let $n=5$ and 
$T =  
\begin{tikzpicture}[font=\tiny, scale=0.3,baseline=0cm]
\tgyoung(0cm,1cm,<1><1><2>,<3><5>,<4>)
\end{tikzpicture}
$. 
Then $w(T) = 2 1 5 1 3 4$, so we take $u = 1 5 1 3 4 $ and $x = 2$. 
We have that $u = w(T')$ where 
$T' =  
\begin{tikzpicture}[font=\tiny, scale=0.3,baseline=0cm]
\tgyoung(0cm,1cm,<1><1>,<3><5>,<4>)
\end{tikzpicture}
$, hence the cocyclage of $T$ is the tableau 
\[\Cyc_{\AAA} (T) = 2 \rightarrow T' = 
\begin{tikzpicture}[font=\tiny, scale=0.3,baseline=0cm]
\tgyoung(0cm,1cm,<1><1><5>,<2><3>,<4>)
\end{tikzpicture}.
\]
\end{exa}

Now we can define cocyclage more generally. 
Let $T$ be a semistandard Young tableau whose weight is not equal to its shape. 
If there is a letter $\ell$ of $\cA_n$ contained in every column of  $T$, we say that the cocyclage is {not authorized} for $T$,
and we define the \textit{reduction} of $T$ to be the tableau $\red(T)$ obtained by deleting (recursively for every such $\ell$), 
all occurences of $\ell$ and replacing all $i>\ell$ by $i-1$.
Then the cocyclage is authorized for $\red(T)$ and we define $\Cyc_{\AAA}(T)=\Cyc_{\AAA}(\red(T))$.

\begin{exa}
\label{ex:wazne2}
The cocyclage is not authorized for the tableau 
$T=  
\begin{tikzpicture}[font=\tiny, scale=0.3,baseline=0cm]
\tgyoung(0cm,1cm,<1><1><1>,<2><2><3>,<4>)
\end{tikzpicture}.
$
We compute 
$\red(T)=
\begin{tikzpicture}[font=\tiny, scale=0.3,baseline=0cm]
\tgyoung(0cm,1cm,<1><1><2>,<3>)
\end{tikzpicture}$
and we get $\Cyc_{\AAA}(T)=\Cyc_{\AAA}(\red(T))=
\begin{tikzpicture}[font=\tiny, scale=0.3,baseline=0cm]
\tgyoung(0cm,1cm,<1><1>,<2><3>)
\end{tikzpicture}.
$
\end{exa}

\begin{rem}\label{rem_red_simp}
Let $\la,\mu$ be two partitions of the same size, and let $T \in \SSYTab(\la,\mu)$.
Note that $\op{simp}$ has the following interpretation:
$\op{simp}(\la,\mu)_1$ is the shape of
$\red(T)$ and $\op{simp}(\la,\mu)_2$ is its weight (see \Cref{alg:red}).
\end{rem}

A quick comparison of \cref{ex:wazne2} and \cref {ex:wazne} suggests
that $\wshift$ corresponds to $\Cyc_{\AAA}$. This is indeed the
case for natural tableaux (modulo gravity).
Although the proof
is easy, it seems that this simple description of cocyclage was
overlooked in the literature. Moreover, it will link the cocylage in type
$\AAA$ with the cocyclage in type $\CC$
as we will show in \Cref{insandshift} (see also \Cref{rem_philo_2} and
\Cref{rem_philo_3}).

\begin{prop}
\label{typeacyc}
Let $T$ be a natural tableau $T \in
\SSTab(\alpha,\mu)$ where $\al$ is a unimodal composition and $\mu$ a partition. Then
\[ \Cyc_{\AAA}(\grav(T)) = \grav(\wshift(T)).\]
\end{prop}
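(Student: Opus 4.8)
The plan is to reduce the statement to a comparison of two well-understood operations --- cocyclage on the south-western row word (or equivalently north-eastern column word) on one side, and the weighted shift on shapes, lifted to natural tableaux, on the other --- and to exploit the fact that a natural tableau is \emph{rigid}: it is completely determined by its shape together with its content, because the natural-order condition forces each entry into a prescribed box once the multiset of entries and the shape are fixed. First I would deal with the case distinction built into $\wshift$. If $(\alpha,\mu)=\op{simp}(\alpha,\mu)$, then by \Cref{rem_red_simp} the cocyclage is authorized for $\grav(T)$ (no letter sits in every column), so $\Cyc_{\AAA}$ acts directly; otherwise $\op{simp}$ records exactly the reduction $\red$ applied to $\grav(T)$, after which cocyclage is authorized. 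Since $\grav$ and $\red$ manifestly commute on natural tableaux (both only reorganize columns / relabel letters, preserving the natural order), it suffices to treat the authorized case, i.e. to show
\[
\Cyc_{\AAA}(\grav(T)) = \grav(\shift(T))
\]
for a natural tableau $T$ of unimodal shape $\alpha$ for which cocyclage is authorized, where on the right $\shift(T)$ is the natural tableau of shape $\shift(\alpha)$ with the same entries as $T$.

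Next I would compute both sides explicitly. For the right-hand side: $\shift(\alpha)=\alpha-e_j+e_{j+1}$ with $j=\min\{i:\alpha_i=\max_k\alpha_k\}$, so passing from $T$ to $\shift(T)$ moves the content of the last box of row $j$ (in natural order, the largest entry of that row, call it $x$) down to become the new last box of row $j+1$; after gravity this box lands in row $j+1$ of the Young diagram. For the left-hand side I would analyze the north-eastern column word $w(\grav(T))$. The key observation is that for a natural tableau of unimodal shape, after gravity, the entry $x$ occupying the rightmost box of row $j$ is precisely the letter that gets peeled off as the ``$x$'' in the cocyclage definition $w(\grav(T))=xu$: it is the first letter of the column word, read from the top of the last column, and one checks that the remaining word $u$ is plactically equivalent to the column word of $\grav(\shift(T))$ with $x$ removed. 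Then one verifies that column-inserting $x$ back --- the $x\to T'$ step --- reinstates $x$ in exactly row $j+1$, matching $\grav(\shift(T))$. Here the unimodality of $\alpha$ is what guarantees that the bumping chain of this single insertion is trivial in the relevant sense (the inserted letter simply lengthens row $j+1$), because the natural tableau has weakly increasing rows with a very controlled shape.

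The main obstacle I expect is the careful bookkeeping in the second paragraph: verifying that the letter stripped off by cocyclage is exactly the letter moved by $\shift$, and that the Schensted insertion $x\to T'$ produces precisely the box predicted by $\shift(\alpha)$, rather than bumping into an earlier row. This is where naturality and unimodality must be used in an essential way; without them the insertion could cascade. Concretely, I would phrase this as a lemma: \emph{for a natural tableau $T$ of unimodal shape whose gravity admits cocyclage, $w(\grav(T))$ begins with the maximal entry $x$ of row $j$, and $\Cyc_{\AAA}(\grav(T))$ is obtained from $\red$-free $\grav(T)$ by deleting $x$ from row $j$ and appending it to row $j+1$}. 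Granting that lemma, both sides of the proposition equal the natural tableau of shape $\grav(\shift(\alpha))$ with the content of $T$, and since natural tableaux of a given shape and content are unique, they coincide. Finally I would reassemble the two cases to conclude $\Cyc_{\AAA}(\grav(T))=\grav(\wshift(T))$ in general.
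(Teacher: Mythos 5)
Your overall framework — reducing to the authorized case via $\op{simp}$, then showing directly that $\Cyc_{\AAA}(\grav(T)) = \grav(\shift(T))$ — aligns with the paper's strategy. However, there are two genuine gaps in the way you close the argument.

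First, your ``main obstacle'' lemma is false as stated. You claim that $\Cyc_{\AAA}(\grav(T))$ is obtained from $\grav(T)$ simply ``by deleting $x$ from row $j$ and appending it to row $j+1$,'' i.e.\ that the insertion $x\to T'$ is ``trivial'' and does not bump anything. But column insertion does cascade: consider the tableau from \cref{ex:wazne2} in the paper, $\red(T)$ with row word $2113$; removing $x=2$ and reinserting it bumps the $3$ out of the first column and re-inserts that $3$ into the next column. The shape changes by $\alpha - e_j + e_{j+1}$ as you say, but the entries are reshuffled, not just one box moved. The paper handles exactly this cascade with $\localshift$ (\cref{lemma:locshift}), showing by induction on columns that each bumping step corresponds to one local shift; this is the technical core that your plan skips.

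Second, your concluding uniqueness argument does not apply. You assert that both sides ``equal the natural tableau of shape $\grav(\shift(\alpha))$ with the content of $T$,'' but $\grav(\shift(T))$ need not be natural. For instance with $\alpha=(1,3,2)$ and standard content, $\shift(T)$ is natural of shape $(1,2,3)$, yet $\grav(\shift(T))$ has shape $(3,2,1)$ and natural-order reading $1,3,6,2,5,4$ — not increasing. Shape and content alone do not determine a semistandard Young tableau, so you cannot conclude equality by a naturality/uniqueness principle at the level of $\grav$. (Separately, your treatment of the non-authorized case via a ``$\grav$ and $\red$ manifestly commute'' claim is loose; the paper's cleaner observation is that if cocyclage is not authorized then $\alpha$ is already a partition, so $\grav(T)=T$, and the reduction $\red$ only needs to act on Young tableaux.)
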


\begin{proof}
First assume that the cocyclage is authorized for $T$.  
Let $\square_{a}, \square_{a+1}$ be consecutive boxes in
$\mathcal{D}_{\alpha}$ with respect to the natural order, with $k =
T(\square_{a}), \ell = T(\square_{a+1})$. Let $C'$ be the column of $T$
containing $\square_{a+1}$ and let $C = \grav C '$. Then 
\begin{align*}
k \rightarrow C = D\Skew(0: \ell ),
\end{align*}
where $D$ is obtained from $C$ by replacing the entry $T(\square_{a+1}) =\ell$ by $k$. 
Since this property only depends on the relative position of the entries in $T$, it follows by induction on the number of
columns that  
\begin{align*}
\grav (\localshift^{r+1} (T) ) = \Cyc_{\AAA}(\grav(T)).
\end{align*}
where $r = \alpha_{j+1}$ and $j = \min\{i \mid \alpha_i = \max_k \alpha_k\}$. 
By \Cref{lemma:locshift} we have
$\localshift^{r+1} (T)  = \shift(T)$,
which yields
$$\Cyc_{\AAA}(\grav(T)) = \grav(\shift(T)).$$
Now, since cocyclage is authorized for $T$, this means that we do not
have to use reduction,
and therefore by \Cref{rem_red_simp} we simply have $\wshift(T)=\shift(T)$.
This finishes the proof in this case.

Assume now that the cocyclage is not authorized for $T$. 
Then there exists some letter $\ell \in \mathcal{A}_{n}$ appearing in each column of $T$.
We have $\max \alpha_{k}  = \mu_\ell$, since the same number can only appear once in each column (since $T$ is semistandard).   
Since $\mu$ is a partition,  this implies 
$\mu_{1} = ... = \mu_{\ell}$ and $\al_1=\ldots=\al_\ell=\mu_\ell=\max \al_k$.
Therefore, since $\al$ is unimodal, $\al$ is a partition. This gives
\begin{align*}
\Cyc_{\AAA} (\grav(T))
& =  \Cyc_{\AAA} (T) \text{ \quad since $\al$ is a partition}
\\
& =  \Cyc_{\AAA} (\red(T)) \text{ \quad since cocyclage is not authorized for $T$}
\\
& = \grav(\shift(\red(T))) \text{ \quad by the previous case}
\\
& = \grav(\wshift(T)) \text{ \quad by \Cref{rem_red_simp}.}
\end{align*}
\end{proof}

\section{Lecouvey's conjecture, symplectic insertion and cocyclage}
\label{typec}
\subsection{Kostka--Foulkes polynomials}
\label{kostka}

Let $\Phi$ be a finite, reduced root system and $\Phi^+ \subset \Phi$ a choice of 
positive roots. We denote by $W$ the corresponding Weyl
group. Similarly, let
$\Lambda$ be the integral weight lattice and $\Lambda^+$ its dominant
part. Let $\Z[\Lambda] =
\Span_\Z\{e^\lambda:\lambda \in \Lambda\}$ denote the group ring of
$\Lambda$. We denote by $\epsilon: \Z[\Lambda] \to \Z[\Lambda]$ the
skew-symmetrizing operator, that is
\[ \epsilon(f) = \sum_{w \in W}(-1)^{\ell(w)}w(f),\]
where $f \in \Z[\Lambda]$. We also recall the the definition of the Weyl character:
\[ \chi(\lambda) = \frac{\epsilon(e^{\lambda +
      \rho})}{\epsilon(e^\rho)},\]
where $\lambda \in \Lambda^+$ is dominant and $\rho =
\frac{1}{2}\sum_{\alpha \in \Phi^+}\alpha$. This is the character of an irreducible $\mathfrak{g}$--module of highest
weight $\lambda$, where $\mathfrak{g}$ is the complex semisimple Lie algebra
associated with $\Phi$.
The Hall--Littlewood polynomial $P_\lambda(q)$ is a one-parameter deformation between
Weyl characters and orbit sums $m(\lambda) = |W_\lambda|^{-1}\sum_{w
  \in W}e^{w(\lambda)}$, where $W_\lambda < W$ is the stabilizer
of $\lambda$.
Indeed,
\[ P_\lambda(q) =
  \epsilon\left(e^{\lambda+\rho}\prod_{\alpha\in\Phi: \langle
      \lambda,\alpha\rangle
      >0}(1-qe^{\alpha})\right)/\epsilon(e^{\rho})\]
and $P_\lambda(0) = \chi(\lambda)$ is the Weyl character while
$P_\lambda(1) = m(\lambda)$ is the orbit sum.

The Kostka--Foulkes polynomials $K_{\lambda,\mu}(q) \in \Z[q]$ for $\lambda,\mu
\in \Lambda^+$ are then defined as the coefficients in the decomposition of the Weyl characters
in the basis of Hall--Littlewood polynomials:

\begin{equation}
\label{eq:KostkaFoulkes}
\chi(\lambda) = \sum_{\mu \in \Lambda^+}K_{\lambda,\mu}(q)P_\mu(q).
\end{equation}

Note that $K_{\lambda,\mu}(1)$ is the dimension of the $\mu$-weight
space of an irreducible $\mathfrak{g}$--module of highest weight
$\lambda$. Moreover, it was conjectured by Lusztig \cite{Lusztig1983}
and proven by Kato \cite{Kato1982} that Kostka--Foulkes polynomials are
appropriately normalized Kazhdan--Lusztig polynomials. This implies
that $K_{\lambda,\mu}(q) \in \Z_{\geq 0}[q]$ has nonnegative integer
coefficients, which naturally leads to \Cref{problem}.

In the following we are going to investigate \Cref{problem} when
$\Phi$ is the root system of type $\CC_n$.
We will use the superscript $\CC_n$ to indicate that we work in this case.

\subsection{Symplectic tableaux}
\label{symptab}
Let $n$ be a positive integer and $\lambda,\mu$ partitions with at most $n$
parts. From now on, $\mathfrak{g} = \mathfrak{sp}_{2n}(\mathbb{C})$ will be the complex symplectic Lie algebra, whose associated root system is of type $\CC_{n}$.  
A  \textit{Kashiwara--Nakashima} tableau, or
\textit{symplectic} tableau of shape $\lambda$ and
\textit{weight} $\mu$ is a Young tableau 
\[T \in \bigcup_{\beta}\SSYTab_{\mathcal{C}_n}(\lambda,\beta),\]
such that
\begin{itemize}
\item
$\mathcal{C}_n = \{\ov{n}< \dots < \ov{1} < 1 \dots < n\},$
\item
  we take the union over $\beta$ of the form 
  \[\beta =
(k_n+\mu_{\ov{n}},k_{n-1}+\mu_{\ov{n-1}},\dots,
k_{1}+\mu_{\ov{1}},k_1,\dots,k_n),\]
where $k_1,\dots,k_n \in \Z_{\geq
    0}$ and $\mu = (\mu_{\ov{n}},\dots, \mu_{\ov{1}})$,
\item
each one of its columns is \textit{admissible},
\item
The \textit{split version} of $T$ is semistandard.
\end{itemize}
The last two conditions will not be used in this work, therefore we
refer the reader to \cite{Lecouvey2005} for a detailed definition.
Given partitions $\mu,\lambda$ we will denote the set of symplectic tableaux of shape $\lambda$ and weight $\mu$ by $\op{SympTab}_{n}(\lambda,\mu)$. 
The following proposition justifies why we do not need the last two
defining properties of symplectic tableaux.

\begin{prop/def}
\label{prop/def}
Let $\lambda = (p)$ and $\mu$ be a partition. Then 
\[\op{SympTab}_{n}(\lambda, \mu) = \bigcup_{k_1,\dots,k_n \in \Z_{\geq
    0}}\SSYTab_{\mathcal{C}_n}(\lambda, (k_n+\mu_{\ov{n}},k_{n-1}+\mu_{\ov{n-1}},\dots,
  k_{1}+\mu_{\ov{1}},k_1,\dots,k_n)). \]
\end{prop/def}

We will also use the following notation:

\[\mathcal{C}= \underset{n \in \mathbb{Z}_{\geq 1}}{\bigcup} \mathcal{C}_{n} = \{\dots  < \overline{n} < \dots < \overline{1} < 1 <
  \dots \overline{n} < \dots \}, \]

\noindent
with the convention that $\ov{\ov{n}} = n$ and 

\[\op{SympTab}_{n}(\lambda) = \underset{\mu}{\bigcup}
  \op{SympTab}_{n}(\lambda,\mu), \qquad \op{SympTab}_{n} = \underset{\lambda}{\bigcup} \op{SympTab}_{n}(\lambda).\]

For two integers $i \leq j $, we will use the following notation:

\[ [i,j]_{\mathcal{C}} : = \{ k \in [i,j]: k \neq 0 \} \] 

where

\[ [i,j] = \{k \in \mathbb{Z}| i \leq k \leq j \}. \]

We are interested in the set of symplectic tableaux since these
objects give a natural basis of the $\mu$-weight
space of an irreducible $\mathfrak{g}$--module of highest weight
$\lambda$ in type $\CC$, see \cite{KashiwaraNakashima1994}. 
Therefore

\[ K^{\CC_n}_{\lambda,\mu}(1) = |\op{SympTab}_{n}(\lambda,\mu)|.\]

\subsection{Symplectic insertion}

We recall the definition of symplectic insertion as introduced in \cite{Lecouvey2005}. 
Given a letter $* \in \mathcal{C}$ and an admissible column $C$
(again, we do not really need the definition of admissibility in this
work, but roughly speaking this is a condition which assures that the
insertion $*\to C$ described in the following part produces a
symplectic tableau, see \cite{Lecouvey2005}), the insertion $* \rightarrow C$ is defined as follows. 
If $*$ is larger than all the letters of $C$, then place it in a new box at the bottom of $C$.
This yields a column $C'$ and we set $*\to C=C'$.
Otherwise,
if $C = \Skew(0:\hbox{\scriptsize{$a$}})$ consists of only one box, set $$*\rightarrow C \coloneqq \Skew(0:\hbox{\scriptsize{$*$}},\hbox{\scriptsize{$a$}}).$$ 
The insertion of a letter into a column of length at least 2 is defined inductively as follows.
For the base case, assume that 
$C = \Skew(0:\hbox{\scriptsize{$a$}}|0:\hbox{\scriptsize{$b$}})$ consists of two boxes. Then we consider the following four cases: 
\begin{enumerate}[label=(I$\arabic*$), ref=(I\arabic*)]
\item \label{I1} If $a<* \leq b$ and $b\neq \overline a$, then 
$$* \rightarrow \Skew(0:\hbox{\scriptsize{$a$}}|0:\hbox{\scriptsize{$b$}}) \coloneqq 
\op{grav}
\Skew(
0:\hbox{\scriptsize{$a$}}
|0:\hbox{\scriptsize{$*$}},\hbox{\scriptsize{$b$}}
).
$$
\item \label{I2} If $*\leq a < b$ and $b\neq \overline *$, then $$*
  \rightarrow
  \Skew(0:\hbox{\scriptsize{$a$}}|0:\hbox{\scriptsize{$b$}}) \coloneqq
  \grav\Skew(0:\hbox{\scriptsize{$*$}},\hbox{\scriptsize{$a$}}|0:\hbox{\scriptsize{$b$}}).$$
\item \label{I3} If $a = \overline b$ and $\overline b \leq * \leq b$, 
then $$* \rightarrow \Skew(0:\hbox{\scriptsize{$\overline{b}$}}|0:\hbox{\scriptsize{$b$}}) \coloneqq
\op{grav}
\Skew(
0:\hbox{\scalebox{.6}{$\ov{b\text{+}1}$}}
|0:\hbox{\scriptsize{$*$}},\hbox{\scalebox{.6}{$b$+$1$}}
).$$
\item \label{I4} If $* = \overline b$ and $\overline b < a < b$, then $$* \rightarrow \Skew(0:\hbox{\scriptsize{$a$}}|0:\hbox{\scriptsize{$b$}}) \coloneqq \grav
\Skew(0:\hbox{\scalebox{.6}{$\ov{b\text{-}1}$}},\hbox{\scriptsize{$a$}}
|0:\hbox{\scalebox{.6}{$b\text{-}1$}}
).$$ 
\end{enumerate}
Note that cases \ref{I1} and \ref{I2} amount to ordinary column
bumping.

Let $C$ be of length $k\geq3$, and suppose that the insertion of a letter
into a column of length $k-1$ has been already defined and yields an $n$-symplectic tableau
of shape $(2,1^{k-2})$.
Write $C=\Skew(
0:\hbox{\scalebox{.8}{$a_1$}}
|0:\hbox{\scalebox{.8}{$a_2$}}
|0:\hbox{\scalebox{.8}{$\vdots$}}
|0:\hbox{\scalebox{.8}{$a_k$}}
)
$ 
and $C'=
\Skew(
0:\hbox{\scalebox{.8}{$a_2$}}
|0:\hbox{\scalebox{.8}{$\vdots$}}
|0:\hbox{\scalebox{.8}{$a_k$}}
)$.
Let
$*\to C'=
\Skew(
0:\hbox{\scalebox{.8}{$\be_2$}},\hbox{\scalebox{.8}{$y$}}
|0:\hbox{\scalebox{.8}{$b_3$}}
|0:\hbox{\scalebox{.8}{$\vdots$}}
|0:\hbox{\scalebox{.8}{$b_k$}}
)
$ and 
$\be_2\to
\Skew(
0:\hbox{\scalebox{.8}{$a_1$}}
|0:\hbox{\scalebox{.8}{$y$}}
)
=
\Skew(
0:\hbox{\scalebox{.8}{$b_1$}},\hbox{\scalebox{.8}{$z$}}
|0:\hbox{\scalebox{.8}{$b_2$}}
)
$.
Then
$*\to C \coloneqq
\Skew(
0:\hbox{\scalebox{.8}{$b_1$}},\hbox{\scalebox{.8}{$z$}}
|0:\hbox{\scalebox{.8}{$\vdots$}}
|0:\hbox{\scalebox{.8}{$b_{\scalebox{.6}{$k$-$1$}}$}}
|0:\hbox{\scalebox{.8}{$b_{k}$}}
),
$
which is a symplectic tableau.

\begin{exa} Take $*=\ov{3}$ and 
$
C=
\Skew(
0:\hbox{\scalebox{.8}{$\ov{5}$}}
|0:\hbox{\scalebox{.8}{$\ov{3}$}}
|0:\hbox{\scalebox{.8}{$\ov{1}$}}
|0:\hbox{\scalebox{.8}{$3$}}
).$
We first need to compute 
$\ov{3}\to 
\Skew(
0:\hbox{\scalebox{.8}{$\ov{3}$}}
|0:\hbox{\scalebox{.8}{$\ov{1}$}}
|0:\hbox{\scalebox{.8}{$3$}}
)$.
For this we compute
$\ov{3}\to 
\Skew(
0:\hbox{\scalebox{.8}{$\ov{1}$}}
|0:\hbox{\scalebox{.8}{$3$}}
)=
\Skew(
0:\hbox{\scalebox{.8}{$\ov{2}$}},\hbox{\scalebox{.8}{$\ov{1}$}}
|0:\hbox{\scalebox{.8}{$2$}}
)
$
\quad and
$\ov{2}\to 
\Skew(
0:\hbox{\scalebox{.8}{$\ov{3}$}}
|0:\hbox{\scalebox{.8}{$\ov{1}$}}
)=
\grav
\Skew(
0:\hbox{\scalebox{.8}{$\ov{3}$}}
|0:\hbox{\scalebox{.8}{$\ov{2}$}},\hbox{\scalebox{.8}{$\ov{1}$}}
)=
\Skew(
0:\hbox{\scalebox{.8}{$\ov{3}$}},\hbox{\scalebox{.8}{$\ov{1}$}}
|0:\hbox{\scalebox{.8}{$\ov{2}$}}
),
$
\quad and we get
$\ov{3}\to 
\Skew(
0:\hbox{\scalebox{.8}{$\ov{3}$}}
|0:\hbox{\scalebox{.8}{$\ov{1}$}}
|0:\hbox{\scalebox{.8}{$3$}}
)=
\Skew(
0:\hbox{\scalebox{.8}{$\ov{3}$}},\hbox{\scalebox{.8}{$\ov{1}$}}
|0:\hbox{\scalebox{.8}{$\ov{2}$}}
|0:\hbox{\scalebox{.8}{$2$}}
)
$.
Finally, since
$\ov{3}\to 
\Skew(
0:\hbox{\scalebox{.8}{$\ov{5}$}}
|0:\hbox{\scalebox{.8}{$\ov{1}$}}
)=
\grav
\Skew(
0:\hbox{\scalebox{.8}{$\ov{5}$}}
|0:\hbox{\scalebox{.8}{$\ov{3}$}},\hbox{\scalebox{.8}{$\ov{1}$}}
)=
\Skew(
0:\hbox{\scalebox{.8}{$\ov{5}$}},\hbox{\scalebox{.8}{$\ov{1}$}}
|0:\hbox{\scalebox{.8}{$\ov{3}$}}
),
$
\quad
we get
$$*\to C=
\Skew(
0:\hbox{\scalebox{.8}{$\ov{5}$}},\hbox{\scalebox{.8}{$\ov{1}$}}
|0:\hbox{\scalebox{.8}{$\ov{3}$}}
|0:\hbox{\scalebox{.8}{$\ov{2}$}}
|0:\hbox{\scalebox{.8}{$2$}}
)
.
$$
\end{exa}

\begin{figure}[t]
    \centering
    \includegraphics[width=\linewidth]{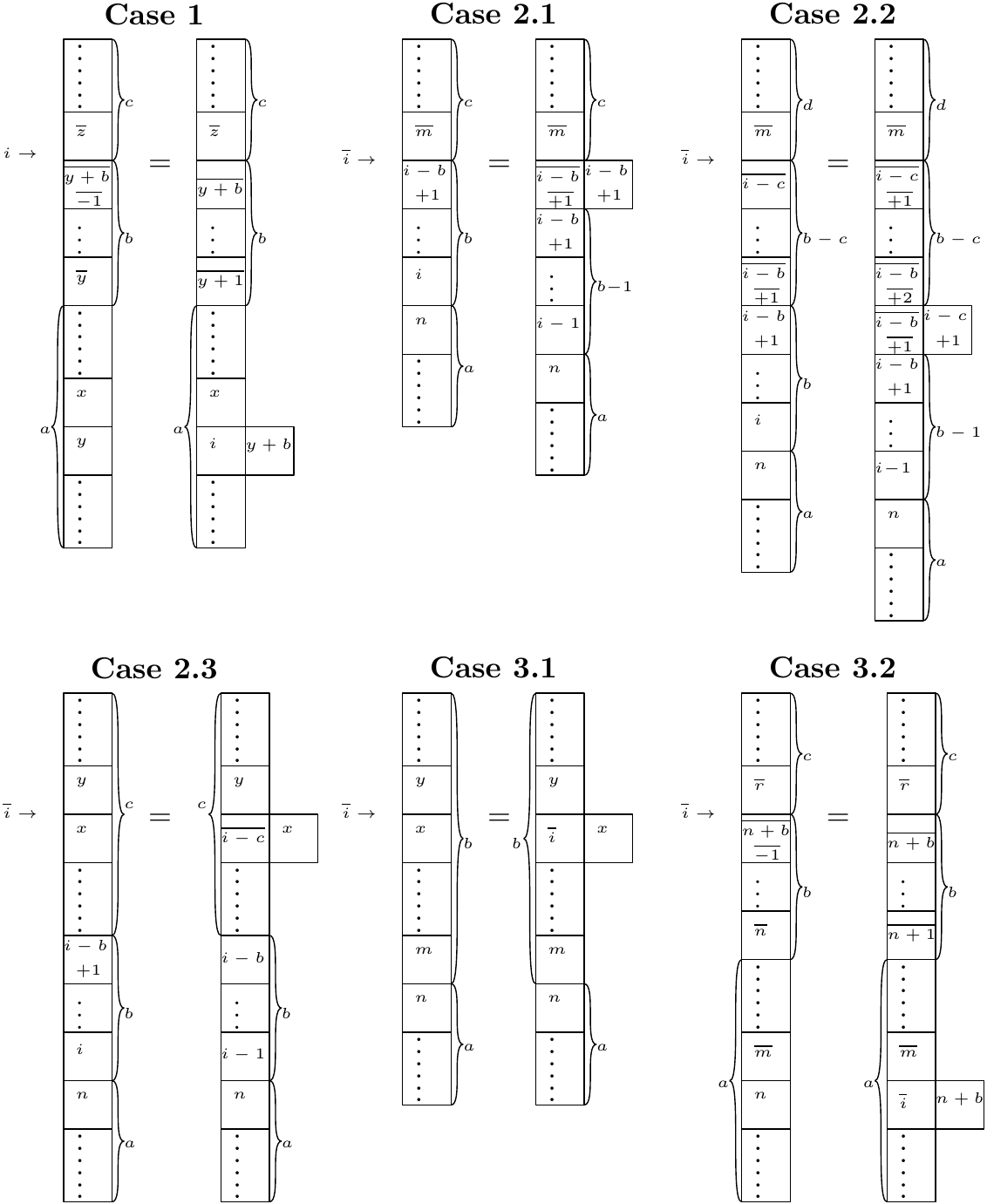}
\caption{All the possible cases in the symplectic insertion $* \to C$
  described by \cref{prop:Insertion}.}
\label{fig:1p1}
\end{figure}

The above definition is not very helpful in practice. 
Indeed, we would like to understand the global impact of 
inserting a letter into a column, 
while the nature of presented definition is local and recursive. 
The following proposition lets us overcome this difficulty.

\begin{proposition}
\label{prop:Insertion}
Let $C$ be a column, that is, a Young tableau of shape $(1, \dots ,
1)$, and let $*$ be an entry not larger than the maximal entry of
$C$. The insertion $* \rightarrow C$ can be classified into three
cases depending on whether $*$ is barred, and whether $\overline{*}$ belongs to $C$. These cases (and corresponding
subcases) amounts
to performing the operations presented in \cref{fig:1p1}, followed by
applying $\op{grav}$, where parameters $*,a,b,c,d,m,n,r,x,y,z$ are
described below:

\begin{enumerate}[label=$\bullet$, ref=Case \arabic*, 
]
\item \label{1}  \textbf{Case 1} from~\cref{fig:1p1} when $* = i$ is unbarred, and
\begin{itemize}[label=$\circ$]
\item
$a \geq 1, b \geq 0, x< i \leq y$, (in the case $a=1$ column $C$ necessarily
contains  $y$)
\item
If $b>0$, $c \geq 0, x < i \leq y < z-b$ (whenever $b = 0$, $c$ and $z$ are not defined). 
\end{itemize}
\item \label{2} \textbf{Case 2}. When $* = \overline i$ is barred and $i
  \in C$ we have the following subcases:
\begin{enumerate}[label=$\bullet$, ref=Case 2.\arabic*, leftmargin=0cm]
\item \label{2.1} \textbf{Case 2.1} from~\cref{fig:1p1} with
\begin{itemize}[label=$\circ$]
\item
$a \geq 0, 1 \leq b \leq i, c \geq 0$,
\item
$n> i$,
\item
$m> i-b+1$ (defined whenever $c > 0$).
\end{itemize}
\item \label{2.2}\textbf{Case 2.2} from~\cref{fig:1p1} with
\begin{itemize}[label=$\circ$]
\item
$a \geq 0, 1 \leq b \leq i, b-c > 0, d \geq 0$,
\item
$n> i$,
\item
$m > i-c+1$.
\end{itemize}
\item \label{2.3} \textbf{Case 2.3} from~\cref{fig:1p1} with
\begin{itemize}[label=$\circ$]
\item
$a \geq 0, 1 \leq b \leq i, c \geq 1$ ($C$ necessarily contains $x$),
\item
$y < \overline{i-b+1} \leq x$, with the condition that 
there is a box between $x$ and $i-b+1$ if $\overline{i-b+1} = x$,
\item
$n> i$.
\end{itemize}
\end{enumerate}
\item \label{3} \textbf{Case 3}. When $* = \overline i$ is barred and $i
  \notin C$ we have the following subcases:
\begin{enumerate}[label=$\bullet$, ref=Case 3.\arabic*, leftmargin=0cm]
\item \label{3.1} \textbf{Case 3.1} from~\cref{fig:1p1} with
\begin{itemize}[label=$\circ$]
\item
$a \geq 0$, $b \geq 1$  ($C$ necessarily contains $x$) 
\item
$n > i  > m $,
\item
$y < \overline{i} \leq x$.
\end{itemize}
\item \label{3.2}\textbf{Case 3.2} from~\cref{fig:1p1} with
\begin{itemize}[label=$\circ$]
\item
$a \geq 1,$ ($C$ necessarily contains $n$), $b, c \geq 0$,
\item
$n > m  > i$, with the possibility that $\ov{m}$  or $\ov{n}$ do not appear in $C$ (whenever $a=1$ or $b=0$, respectively)
\item
$r > n+b$, whenever $b>0$.
\end{itemize}
\end{enumerate}
\end{enumerate}
\end{proposition}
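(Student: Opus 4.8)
The plan is to prove \Cref{prop:Insertion} by induction on the length $k$ of the column $C$, following the recursive structure of the symplectic insertion itself. The base cases $k=1$ and $k=2$ are immediate: for $k=1$ the insertion rule is the literal definition $*\to\Skew(0:a)=\Skew(0:*,a)$ (when $*\le a$), and for $k=2$ one simply checks that the four cases \ref{I1}--\ref{I4} of the two-box insertion match, after applying $\op{grav}$, the pictures of \cref{fig:1p1} specialized to two boxes; this is a finite case-check driven by whether $*$ is barred and whether $\ov{*}\in C$. The substance of the argument is the inductive step, where we must reconcile the two-step recursion ``insert $*$ into $C'$ (the column $C$ with its top box $a_1$ removed), read off the bumped entry $\be_2$ and the residual $y$, then insert $\be_2$ into $\Skew(0:a_1|0:y)$'' against the global picture.

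First I would fix notation matching the recursion: write $C=\Skew(0:a_1|0:a_2|0:\vdots|0:a_k)$ and $C'=\Skew(0:a_2|0:\vdots|0:a_k)$, apply the inductive hypothesis to describe $*\to C'$ as one of the cases of \cref{fig:1p1} (of size $k-1$), which determines $\be_2$, $y$, and the shape of the rest of the column; then analyze the single remaining two-box insertion $\be_2\to\Skew(0:a_1|0:y)$ via \ref{I1}--\ref{I4}. The key observation to extract is that the ``bumping chain'' propagates in a controlled way: in Cases 1, 3.2 the insertion behaves essentially like ordinary (type $\AAA$) column bumping with at most one ``barred/unbarred crossing'' event, whereas Cases 2.1--2.3 and 3.1 involve the genuinely symplectic phenomena coming from rules \ref{I3} and \ref{I4}, i.e., a pair $\{j,\ov j\}$ colliding and being replaced by $\{j\pm 1,\ov{j\pm 1}\}$. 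I would organize the inductive step by asking which case the sub-insertion $*\to C'$ falls into and which of \ref{I1}--\ref{I4} governs $\be_2\to\Skew(0:a_1|0:y)$, and verify that the composite always lands in one of the listed cases with the asserted constraints on $a,b,c,d,m,n,r,x,y,z$; many combinations are incompatible (e.g.\ the constraints $n>i$, $y<\ov{i-b+1}\le x$, etc.\ rule out most transitions), which keeps the bookkeeping finite.

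The main obstacle I anticipate is precisely this combinatorial bookkeeping: making sure that the parameter ranges (the constraints like $x<i\le y<z-b$, $m>i-b+1$, $b-c>0$, ``there is a box between $x$ and $i-b+1$ if $\ov{i-b+1}=x$'', and the parenthetical forced-membership conditions such as ``$C$ necessarily contains $y$'' when $a=1$) are exactly preserved under the recursion, including the boundary/degenerate sub-cases where a parameter vanishes ($a=1$, $b=0$, $c=0$, etc.) and a whole block of the picture collapses. These degenerate cases are where the recursion can ``change type'' — for instance the first two-box insertion creating the initial $\{j,\ov j\}\mapsto\{j\pm1,\ov{j\pm1}\}$ event that then propagates — and they must be checked by hand. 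A secondary subtlety is the interaction of $\op{grav}$ with the recursion: the definition inserts into $C'$ \emph{without} re-gravitating at each intermediate step, so I must track the ``ungravitated'' two-box outputs faithfully and only apply $\op{grav}$ at the very end, matching the statement.

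A cleaner alternative, which I would use to shorten the barred sub-cases, is to exploit the symmetry $*\to C \leftrightarrow$ (reverse-complement of) an insertion of $\ov *$ governed by the involution $j\leftrightarrow\ov j$ on $\cC$ together with row/column transpose; this folds Case 2 into Case 3 (or vice versa) and halves the work, at the cost of first verifying that symplectic insertion is equivariant for this involution. Finally, once the inductive step is set up, I would double-check the statement against the worked example in the excerpt ($*=\ov3$, $C=\Skew(0:\ov5|0:\ov3|0:\ov1|0:3)$, which is a Case~2 insertion: here $\ov*=3\in C$, $i=3$, and the output $\Skew(0:\ov5,\ov1|0:\ov3|0:\ov2|0:2)$ should match Case~2.1 with the appropriate parameters) as a sanity check that the picture and the parameter conventions are transcribed correctly.
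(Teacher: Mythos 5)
Your proposal matches the paper's proof: the paper also argues by induction on the column length $\ell$, with base case $\ell \leq 2$ coinciding with the defining rules \ref{I1}--\ref{I4}, and the inductive step removing the top box, invoking the inductive hypothesis on $*\to C'$, and then resolving the remaining two-box insertion via \ref{I1}--\ref{I4}, tracking exactly the parameter-propagation bookkeeping you describe (the paper carries this out explicitly for Cases~1 and~2.2 and appeals to the tree in \cref{fig:tree} for exhaustiveness and disjointness). Your suggested barred/unbarred-involution symmetry to fold Case~2 into Case~3 is not used in the paper but is a reasonable side remark.
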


\begin{figure}[t]
    \centering
    \includegraphics[width=\linewidth]{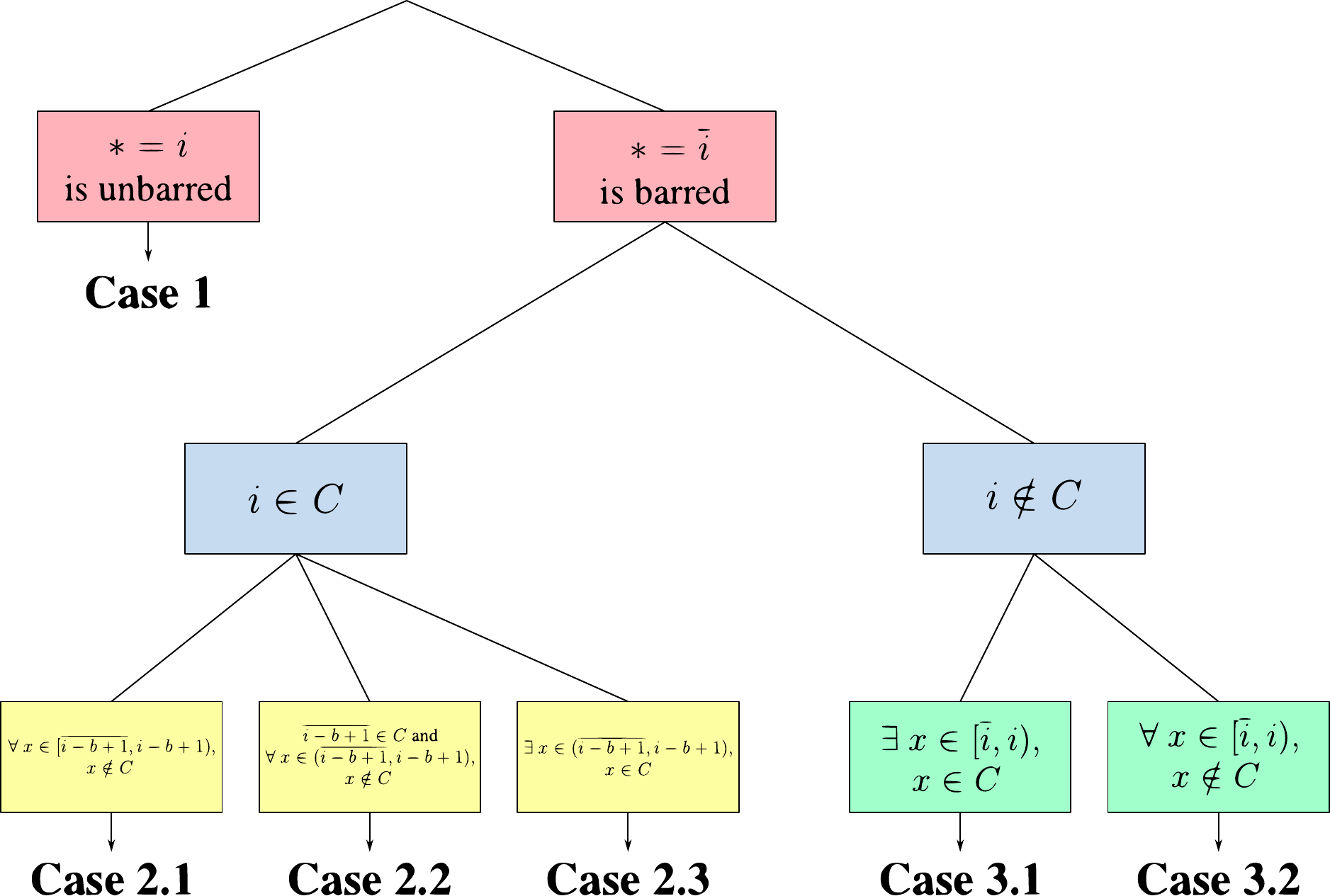}
\caption{}
\label{fig:tree}
\end{figure}

\begin{proof}
By searching the tree presented on \Cref{fig:tree}, we are ensured that
we are always in \ref{1} -- \ref{3} and that all the cases are pairwise
distinct.
We prove the formulas of \ref{1} -- \ref{3} by induction on the length $\ell$ of $C$.
In the case of columns of length at most $2$, this description coincides with the original definition. 
Fix $\ell>2$, assume that the claim holds for all columns
of length $\ell-1$ and let $C$ be a column of length $\ell$. 
Let $C'$ be a column obtained from $C$ by removing its top box $\Skew(0:\hbox{\scriptsize{$t$}})$.
By definition, $* \to C$ is obtained by first performing 
$* \to C'=C''\Skew(0:\hbox{\scriptsize{$t'$}})$ and then inserting the
top entry of $C''$ into $\Skew(0:\hbox{\scriptsize{$t$}}|0:\hbox{\scriptsize{$t'$}})$.
Since the analysis of all the cases is very similar, we only show the proof of \ref{1} and \ref{2.2} 
(where all the possible difficulties are present), 
leaving the proof of the other cases as an easy exercise. 

\hspace{10pt}

\textbf{\ref{1}}. We have either $c >0$ or $c=0$. In the case $c>0$, performing $* \to C'$
yields the shape $C''\Skew(0:\hbox{\scalebox{.7}{$y$+$b$}})$ 
described by \ref{1}, by induction hypothesis.
Then we have to insert the top entry $u$ of $C''$ (which is either the
top entry of $C'$ in the case $c > 1$ or is equal to $\ov{y+b}$) 
into the column $\Skew(0:\hbox{\scriptsize{$t$}}|0:\hbox{\scalebox{.7}{$y$+$b$}})$. 
Since we have $t<u<y+b$, we need to apply the local insertion rule \ref{I1},
which yields the shape described by \ref{1}. 
In the case $c=0$, we have either $b>0$ or $b=0$. 
Suppose first $b = 0$. Then either $y$ is the top entry of $C$, the second
entry from the top, or the $k$-th entry from the top with $k>2$. 
In the first case, we have $t=y$.
Therefore, by induction hypothesis, $i \to C'=C''\Skew(0:\hbox{\scriptsize{$t'$}})$ where the top entry of $C''$ is $i$. 
Thus, it remains to insert $i$ into $\Skew(0:\hbox{\scriptsize{$y$}}|0:\hbox{\scriptsize{$t'$}})$, 
which, by the local insertion rule \ref{I2}, simply bumps out $y$ since $i\leq y$. 
In the second case, by induction hypothesis, after performing $i \to C'$ we
have to insert $i$ to the column $\Skew(0:\hbox{\scriptsize{$x$}}|0:\hbox{\scriptsize{$y$}})$, 
which bumps out $y$ by the local insertion rule \ref{I1}. 
In the last case, by induction hypothesis, after performing $i \to C'$, we
have to insert the top entry $u$ of $C''$, which coincides with the top
entry of $C'$ and satisfies $t<u<y$, into the column
$\Skew(0:\hbox{\scriptsize{$t$}}|0:\hbox{\scriptsize{$y$}})$. Here
again we apply
the local insertion rule \ref{I1}, which amounts to bumping out $y$. 
In all three configurations, this yields the shape described by \ref{1}.
Finally, suppose that $b > 0$. 
By induction hypothesis, after performing $i \to C'$,
we have to insert the top entry $u$ of $C''$, which coincides with the top
entry of $C'$ and satisfies $\overline{y+b-1}<u<y+b-1$, into the column
$\Skew(0:\hbox{\scalebox{.5}{$\overline{y\text{+}b\text{-}1}$}}|0:\hbox{\scalebox{.5}{$y$+$b$-$1$}})$.
Here we apply
the local insertion rule \ref{I3}, which gives
$\grav \Skew(0:\hbox{\scalebox{.7}{$\overline{y\text{+}b}$}}|0:\hbox{\scriptsize{$u$}},\hbox{\scalebox{.7}{$y$+$b$}})$.
Once again, this yields the shape described in \ref{1}.

\hspace{10pt}

\textbf{\ref{2.2}}. We have either $d >0$ or $d=0$. 
In the case $d>0$, performing $\ov{i} \to C'$
yields the shape described by \ref{2.2}, by induction hypothesis.
We have to insert the top entry $u$ of $C''$, which coincides with the top
entry of $C'$ and satisfies
$t<u<i-c+1$, into the column
$\Skew(0:\hbox{\scriptsize{$t$}}|0:\hbox{\scalebox{.5}{$i$-$c$+$1$}})$. 
By the local insertion rule \ref{I1}, 
this simply bumps out the entry $i-c+1$, which yields the
shape described in \ref{2.2}. 
In the case $d=0$, we either have $b-c>1$ or $b-c=1$. 
Suppose $b-c>1$. By induction hypothesis, 
after performing $\ov{i} \to C'$, which is described by \ref{2.2},
we have to insert $\overline{i-c}$ to the column
$\Skew(0:\hbox{\scalebox{.7}{$\ov{i\text{-}c}$}}|0:\hbox{\scalebox{.7}{$i$-$c$}})$. 
Here we apply
the local insertion rule \ref{I3}, which gives
$\grav \Skew(0:\hbox{\scalebox{.5}{$\overline{i\text{-}c\text{+}1}$}}
|0:\hbox{\scalebox{.7}{$\overline{i\text{-}c}$}},\hbox{\scalebox{.5}{$i$-$c$+$1$}})$.
Suppose $b-c=1$. By induction hypothesis, $\ov{i}\to C'$ corresponds to \ref{2.1}
with $c=0$. Therefore after performing $\ov{i} \to C'$, we have to insert
$\overline{i-b+1}$ into the column 
$\Skew(0:\hbox{\scalebox{.5}{$\overline{i\text{-}b\text{+}1}$}}|0:\hbox{\scalebox{.5}{$i$-$b$+$1$}})$.
Here again we apply
the local insertion rule \ref{I3}, which yields
$\grav \Skew(0:\hbox{\scalebox{.5}{$\overline{i\text{-}b\text{+}2}$}}
|0:\hbox{\scalebox{.5}{$\overline{i\text{-}b\text{+}1}$}},\hbox{\scalebox{.5}{$i$-$b$+$2$}})=
\grav \Skew(0:\hbox{\scalebox{.5}{$\overline{i\text{-}c\text{+}1}$}}
|0:\hbox{\scalebox{.5}{$\overline{i\text{-}b\text{+}1}$}},\hbox{\scalebox{.5}{$i$-$c$+$1$}})$.
In both cases we obtain \ref{2.2} described in the statement.

The proof of the remaining cases is analogous.
\end{proof}

We can now define the insertion $*\to T$ of a letter $*$ into a symplectic tableau $T$.
This is achieved by the following recursive procedure. Let $T'$
denote the result of inserting $*$ into the first column of $T$ according to the previous rule.
Denote by $T''$ the tableau obtained from $T$ by removing its first column.
If $T'$ is a column, juxtapose 
this column with $T''$.
Otherwise, $T'$ is the juxtaposition of a column and a box 
$\Skew(0:\hbox{\scalebox{.9}{$b$}})$. Then juxtapose this column with $(b\to T'')$. 
It is proved in \cite{Lecouvey2005} that this procedure yields a well-defined map between
$\SympTab_{n}$ and $\SympTab_{n+1}$.

Let $\alpha$ be a unimodal composition and $T \in
\Tab_{\mathcal{C}_n}(\alpha)$ such that $\grav(T) \in \op{SympTab}_n$. We
call such a tableau symplectic of shape $\alpha$. We can
use \Cref{prop:Insertion} to define the insertion $*\to T$ of a letter
$* \in \mathcal{C}_n$. In order to do this, we follow the above
definition of the insertion but additionally recording the vertical
shift between the columns of $T$ and the vertical shift of the box
bumped out. Note that this definition naturally extends the definition
of the insertion to tableaux of partition shape to tableaux of
unimodal composition shape and
$\grav (*\to T) = *\to (\grav T)$.
In particular, the insertion of an entry into an  $n$-symplectic tableau yields
an $n+1$-symplectic tableau.

\begin{exa}\label{exa_symp_insertion}
Let $*=\ov{3}$ and 
$T=\Skew(
0:\hbox{\scriptsize{$\ov{8}$}},\hbox{\scriptsize{$\ov{5}$}}|
0:\hbox{\scriptsize{$\ov{5}$}},\hbox{\scriptsize{$\ov{4}$}}|
0:\hbox{\scriptsize{$\ov{3}$}},\hbox{\scriptsize{$3$}},\hbox{\scriptsize{$8$}}
).
\vspace{2mm}$
The insertion $* \rightarrow  T$ can be computed by successive applications of \Cref{prop:Insertion}.
We have
$$
\begin{array}{cl}
\ov{3} \rightarrow 
\Skew(
0:\hbox{\scriptsize{$\ov{8}$}}|
0:\hbox{\scriptsize{$\ov{5}$}}|
0:\hbox{\scriptsize{$\ov{3}$}}
)
=
\Skew(
0:\hbox{\scriptsize{$\ov{8}$}}|
0:\hbox{\scriptsize{$\ov{5}$}}|
0:\hbox{\scriptsize{$\ov{3}$}},\hbox{\scriptsize{$\ov{3}$}}
)
\vspace{2mm}
&
\text{ by \ref{3.1},}
\\
\ov{3} \rightarrow 
\Skew(
0:\hbox{\scriptsize{$\ov{5}$}}|
0:\hbox{\scriptsize{$\ov{4}$}}|
0:\hbox{\scriptsize{$3$}}
)
=
\Skew(
0:\hbox{\scriptsize{$\ov{5}$}}|
0:\hbox{\scriptsize{$\ov{4}$}}|
0:\hbox{\scriptsize{$\ov{3}$}},\hbox{\scriptsize{$3$}}
)
\vspace{2mm}
&
\text{ by \ref{2.1}, and}
\\
3 \rightarrow 
\Skew(
0:\hbox{\scriptsize{$8$}}
)
=
\Skew(
0:\hbox{\scriptsize{$3$}},\hbox{\scriptsize{$8$}}
)
&
\text{ by \ref{1}.}
\end{array}
$$
Therefore, we get $*\ra T =
\Skew(
0:\hbox{\scriptsize{$\ov{8}$}},\hbox{\scriptsize{$\ov{5}$}}|
0:\hbox{\scriptsize{$\ov{5}$}},\hbox{\scriptsize{$\ov{4}$}}|
0:\hbox{\scriptsize{$\ov{3}$}},\hbox{\scriptsize{$\ov{3}$}},\hbox{\scriptsize{$3$}},\hbox{\scriptsize{$8$}}
).$
\end{exa}

\subsection{Symplectic cocyclage and charge.}
\label{sympchargedef}

Before we describe the statistic $\op{ch}_n$, we need to introduce the type $\CC$ analogue of the cocyclage presented in \Cref{sec:CyclageA}. 
Let $T$ be a symplectic tableau and let $w = w(\grav T)$ be the column
reading word
of the associated Young tableau. If $w=xu$ where $x$ is a letter, it
is readily shown that $u$ is the word of a symplectic tableau $U$,
obtained from $T$ by removing the corresponding box. The cocyclage operation on $w$ is $\eta(w) = ux$. 
The cocyclage operation may or may not be \textit{authorized} for a given symplectic tableau $T$. 
The following result from \cite[4.3]{Lecouvey2005} characterizes this property. 

\begin{prop/def}
\label{authorized}
Let $\mu$ be a partition with at most $n$ parts, and let $T$ be a symplectic tableau of weight $\mu$ with at least two columns. The cocyclage operation
is not authorized on $T$ if and
only if there exists $1\leq p \leq n$ such that $\mu_{\overline{p}}$ equals the
number of columns of $T$ (which is equivalent to the condition that $\mu_{\overline{n}}$ equals the
number of columns of $T$ since
$\mu$ is a partition).
\end{prop/def}

In fact, if $T$ is a symplectic tableau for which the cocyclage operation is not authorized, we can construct from $T$
a symplectic tableau, called the \textit{reduction} $\op{red}(T)$ of $T$, for which the cocyclage is authorized.
Let $t: \mathcal{C} \rightarrow \mathcal{C}$ be the map defined as
follows:
\[ t(c) = \begin{cases} i+1 &\text{ if } c = i,\\
 \ov{i+1} &\text{ if } c = \ov{i}.\\
\end{cases}\]
We define $\op{red}(T)$ of $T$ recursively as follows.
\begin{enumerate}
\item Set $P = T$.
\item  Delete all the $\ov{n}$'s from $P$ and apply $t$ to all entries $x$ of $P$ such that  $\ov{n} < x < n$ to obtain a new (possibly empty) tableau $T'$.
\item  If $T'$ is authorized, then set $\op{red}(T)= T'$. Otherwise,
  set $P = T'$ and go back to the previous step.
\end{enumerate}

\begin{rem}
  \label{rem:reduction}
  Let $T \in \SympTab_n(\alpha,\mu)$. Note that \Cref{alg:red} was
  defined in a way that it mimics steps in reduction of $T$. Therefore
  it is clear that $\red(T) \in \SympTab_n(\op{simp}(\alpha,\mu))$.
\end{rem}

By convention, if the cocyclage operation is authorized on $T$ we set $\op{red}(T) = T$.
By construction, the cocyclage is authorized for $\op{red}(T)$.

\begin{definition}
Let $T \in \op{SympTab}_{n}$ be a symplectic tableau. If $T$ is a column,
we set $\op{CoCyc}_{\CC}(T)= \op{red}T$. Otherwise let $w = xu =
w(\op{red}(T))$, where $x \in \CCC$ and let $U$ be the symplectic tableau with $w(U) = u$. 
Then we define $\op{CoCyc}_{\CC}(T) = \red\big(x \rightarrow U\big)$.
\end{definition}

\begin{exa}
Let $T=
\Skew(
0:\hbox{\scriptsize{$\ov{8}$}},\hbox{\scriptsize{$\ov{5}$}}|
0:\hbox{\scriptsize{$\ov{5}$}},\hbox{\scriptsize{$\ov{4}$}},\hbox{\scriptsize{$\ov{3}$}}|
0:\hbox{\scriptsize{$\ov{3}$}},\hbox{\scriptsize{$3$}},\hbox{\scriptsize{$8$}}
)
\vspace{2mm}$.
Then $\op{CoCyc}_{\CC}(T) =\ov{3} \rightarrow 
\Skew(
0:\hbox{\scriptsize{$\ov{8}$}},\hbox{\scriptsize{$\ov{5}$}}|
0:\hbox{\scriptsize{$\ov{5}$}},\hbox{\scriptsize{$\ov{4}$}}|
0:\hbox{\scriptsize{$\ov{3}$}},\hbox{\scriptsize{$3$}},\hbox{\scriptsize{$8$}}
)
\vspace{2mm},$
which has already been computed in \Cref{exa_symp_insertion}.
We get $\op{CoCyc}_C(T) =
\Skew(
0:\hbox{\scriptsize{$\ov{8}$}},\hbox{\scriptsize{$\ov{5}$}}|
0:\hbox{\scriptsize{$\ov{5}$}},\hbox{\scriptsize{$\ov{4}$}}|
0:\hbox{\scriptsize{$\ov{3}$}},\hbox{\scriptsize{$\ov{3}$}},\hbox{\scriptsize{$3$}},\hbox{\scriptsize{$8$}}
)
\vspace{2mm}.$
\end{exa}

Let $T \in \op{SympTab}_{n}$ be a symplectic tableau. Then there exists a non-negative
integer $m$ such that $\Cyc_{\CC}^{m}(T)$ is a column $C(T)$ of weight
zero \cite[Proposition 4.2.2]{Lecouvey2005}. We denote by $m(T)$ the smallest non-negative integer with this
property. For a symplectic column $C$ of weight zero we set

\[E_{C}= \{ i \geq 1| i \in C, i+1 \notin C\}.\]

\noindent
The charge of $C$ is defined by

\[ \op{ch}_{n}(C) = 2 \underset{i \in E_{C}}{\sum} (n-i),\]

\noindent
and the charge of an arbitrary symplectic tableau $T$ is defined by 

\[\op{ch}_{n}(T) = m(T) + \op{ch}_{n}(C(T)).  \]

\subsection{Breaking down the insertion of a letter/box in a tableau}

In this section we describe the cocyclage $\Cyc_{\CC}$ in terms of augmented
tableaux introduced in \Cref{augmenteddefs}. This description is an important tool
to describe an iterated application of cocyclage as a simple operation
related with an iterated application of cocyclage in type $\AAA$.

Let $\al\vDash l-1$ be unimodal, and let $T \in \Tab^+(\al)$ be an augmented tableau of shape $(\al,b)$
such that $T_+$ has admissible columns and let  $j = T_{-}(b)$.
Write $T_+$ as the concatenation of its columns $T=C_1 C_2\dots C_t$, and let $m$ be such that $b\in C_m$.
We define a map
\newcommand{\locins}{\mathrm{locins}}
$\locins : \Tab^+(\al) \to \Tab^+_{l-1} \sqcup \Tab_{l}$
as follows

\begin{align*}
\locins (T) =
\left\{ 
\begin{array}{ll}
C_1\dots C_{m-1}C_m'C_{m+1}\dots C_t \in \Tab_{l} &\text{ if } j \to C_m = C_m' \text{ is a column, }  \\
&\\
C_1\dots C_{m-1}C_m'C'_{m+1}\dots C_t \in \Tab_{l} 
&\text{ if } j \to C_m = C_m'\Skew(0: \hbox{\tiny{$j'$}}) \text{ is
                                                      not a column} \\
  &\text{ and } j'\to C_{m+1}=C'_{m+1} \text{ is a column,}\\
&\\
T' \in\Tab^+_{l-1}  
&\text{ otherwise, }
\end{array}\right.
\end{align*}
where
\begin{itemize}
  \item[$\bullet$] $T'_+=C_1\dots C_{m-1}C_m'C_{m+1}\dots C_t$,
 \item[$\bullet$] $T'$ has shape $(\al,b')$ with
$b'=(m+1,-r)\in\cD_{\al}$, 
 \item[$\bullet$] $r$ is the row of $\Skew(0:
\hbox{\tiny{$j''$}})$ in $j'\to C_{m+1}  = C'_{m+1}\Skew(0:
\hbox{\tiny{$j''$}})$, where $j \to C_m = C_m'\Skew(0: \hbox{\tiny{$j'$}})$,
 \item[$\bullet$] $T'_-(b')=j'$ (which determines $T'$ by Remark \ref{aug_tab}).
\end{itemize}
Note that clearly, there exists $k\leq t$ such that $\locins^k(T)\in\Tab_l$.

With this definition, the insertion $j \to T$ for a tableau $T$ of shape $\al$ 
can be identified with the following procedure:
\begin{enumerate}
\item start with the augmented tableau $\tilde{T}$ of shape $(\al,b)$ such that $\tilde{T}_+=T$, $b$ is the box in the first column of $T$ with the
  smallest entry $j'$ such that $j\leq j'$, and $\tilde{T}_-(b)=j$ (this determines $T'$ by Remark \ref{aug_tab}),
\item apply $\locins$ recursively until the result is a tableau.
\end{enumerate}
In particular, the cocyclage of a tableau has the following description
in terms of $\locins$.

\begin{lemma}
\label{lemma:cyc}
Let $T$ be an authorized symplectic tableau of shape $\alpha$ and let
$r \in \mathbb{Z}_{>0}$ be such that $\localshift^{r}(\shape(T)) =
\shift(\shape(T))$. Then 

\begin{align*}
    \op{CoCyc}_{\CC}(T) = \op{red}(\locins^{r-1}(\localshift(T))). 
\end{align*}
\end{lemma}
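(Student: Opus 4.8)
The strategy is to unwind both sides of the claimed identity into the language of the local-insertion map $\locins$ introduced just before the lemma, and then match them step by step. Recall that by definition $\op{CoCyc}_{\CC}(T)=\op{red}(x\to U)$, where $w(\op{red}(T))=xu$ is the column reading word of $\op{red}(T)$ (here $T$ is authorized, so $\op{red}(T)=T$ by convention and $w(T)=xu$), and $U$ is the symplectic tableau with $w(U)=u$. Concretely, $x$ is the entry of the box $\square_1$ in natural order — i.e. the top entry of the rightmost column of $T$ — and $U$ is obtained from $T$ by deleting that box. So the first step is to identify $\locins(\localshift(T))$ with the very beginning of the computation of $x\to U$: applying $\localshift$ to $T$ removes the rightmost upper box (which carries the entry $x$) and splits the first box of the next row, producing an augmented tableau whose ``$+$'' version is $U$ together with the reinsertion data. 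The plan is to check that $\localshift(T)$ is precisely the augmented tableau $\widetilde{U}$ from which the procedure ``apply $\locins$ recursively'' computes $x\to U$, possibly after accounting for the gravity/column-position bookkeeping described in the paragraph on insertion for unimodal-shape tableaux.

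Next I would invoke the characterization, stated right after the definition of $\locins$, that $j\to T'$ for a tableau $T'$ of shape $\alpha$ is computed by starting from the augmented tableau $\widetilde{T'}$ with $\widetilde{T'}_+=T'$, the augmented box chosen in the first column, and then iterating $\locins$ until a genuine tableau is produced. Here the subtlety is the column in which $x$ gets reinserted: when we delete the top box of the rightmost column of $T$ to form $U$, and then insert $x$ back, the insertion starts in the \emph{first} column of $U$. The role of $\localshift$ is to move the split box across columns one at a time (Lemma on $\localshift$, and $\localshift^{r}(\shape(T))=\shift(\shape(T))$), so that after $\localshift$ followed by $r-1$ further applications of $\locins$ we have threaded the reinserted entry through exactly the right number of columns. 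I would therefore argue that $\locins^{r-1}(\localshift(T))$ equals $x\to U$ as an element of $\Tab_l$ (or $\Tab_l^+$ collapsed via gravity), using that the recursion defining $\locins$ is literally the column-by-column recursion in the definition of $*\to U$, and that the first $\localshift$ step plus the condition on $r$ aligns the column indices correctly. The exponent $r-1$ rather than $r$ is exactly because the very first $\localshift$ does the ``box removal and re-split'' that initiates the insertion, so only $r-1$ genuine $\locins$ steps remain to carry it through.

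Finally, apply $\op{red}$ to both sides: by the previous two paragraphs $\locins^{r-1}(\localshift(T))=x\to U$, hence $\op{red}(\locins^{r-1}(\localshift(T)))=\op{red}(x\to U)=\op{CoCyc}_{\CC}(T)$, where the last equality is the definition of $\op{CoCyc}_{\CC}$ on a tableau with at least two columns, together with the hypothesis that $T$ is authorized so that $\op{red}(T)=T$ and $w(T)=xu$. One should also treat the degenerate case where $T$ has a single column, but there $\shift(\shape(T))=\shape(T)$ forces $r$ and the statement to be vacuous or to reduce directly to $\op{red}(T)$; I would dispatch this separately at the start.

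\textbf{Main obstacle.} The technical heart is the precise bookkeeping of column indices and of the augmented-box position: one must verify that the augmented box produced by $\localshift(T)$ lands in the column where insertion into $U$ actually begins, and that each subsequent $\locins$ shifts it by exactly one column in lockstep with the column recursion of $*\to U$ — including correctly handling the case where a column insertion $j\to C_m$ is not itself a column but spills a box into $C_{m+1}$. Keeping the gravity normalization (so that we are comparing tableaux of unimodal-composition shape, not just partition shape) consistent throughout is where the argument is most delicate; everything else is definition-chasing.
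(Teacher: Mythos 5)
The paper itself offers no proof of \Cref{lemma:cyc}; it is treated as an immediate unwinding of the definitions of $\op{CoCyc}_{\CC}$, $\localshift$, and $\op{locins}$, together with the stated identification of $j\to T$ with the iterated-$\op{locins}$ procedure, and your plan carries out exactly that unwinding. One slip: $x$ is \emph{not} the entry of $\square_1$ (the north-west corner in the natural order); it is the entry of the box that $\localshift$ removes, namely the last box of the first maximal-length row of $\alpha$ --- which, as you also write, is the top of the rightmost column. Keep the second description and drop the first, since the two contradict each other.

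On the substance, your plan rests on two assertions that are stated but not argued. The first --- that $\localshift(T)$ agrees with the augmented tableau $\widetilde U$ initiating the $\op{locins}$ recursion for $x\to U$ --- is actually easier than you suggest: by definition $\op{locins}$ is determined only by the column index $m$ of the augmented box, the entry $j=T_-(b)$, and the plain tableau $T_+$, and not by the row of $b$ inside its column; and $T_+=U$, $m=1$, $j=x$ hold for both $\localshift(T)$ and $\widetilde U$ by construction, so the possible mismatch in the row of the augmented box is harmless. The second assertion --- that the recursion takes exactly $r-1$ steps --- is the genuine content and is not a formal consequence of the definitions. The number of $\op{locins}$ applications needed before the output becomes a regular tableau depends on the entries of $T$ (the recursion stops as soon as an insertion fails to bump a box into the next column), whereas $r$, defined by $\localshift^{r}(\shape(T))=\shift(\shape(T))$, depends only on the shape, i.e.\ $r-1=\alpha_{j+1}$. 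Equating the two amounts to showing that the bump travels across every column met by row $j+1$ of $\alpha$ and terminates precisely after the last of them; this uses the structure of the tableaux to which the lemma is actually applied in \Cref{cor:crucial} (natural tableaux of unimodal composition shape). You flag this as the ``main obstacle,'' correctly, but a proof must supply the argument rather than only name it.
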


\begin{exa}
\Yboxdim{1cm}
Take 
$T=
\begin{tikzpicture}[font=\tiny, scale=0.5,baseline=0cm]
\tgyoung(0cm,0cm,<\ov{6}>,<\ov{4}>,<\ov{3}>,<4>)
\tgyoung(1cm,-1cm,<\ov{4}>,<\ov{2}>,<6>)
\tgyoung(2cm,-2cm,<2>)
\end{tikzpicture},
$
so that
$\localshift(T)=\tilde{T}=
\begin{tikzpicture}[font=\tiny, scale=0.5,baseline=0cm]
\tgyoung(0cm,0cm,<\ov{6}>,<\ov{4}>,<\ov{3}>,<>)
\tgyoung(1cm,-1cm,<\ov{4}>,<\ov{2}>,<6>)
\draw (0,-3) -- (1,-2);
\draw (0.3,-2.3) node (a) {\hbox{\scalebox{.7}{$2$}}};
\draw (0.7,-2.7) node (b) {\hbox{\scalebox{.7}{$4$}}};
\end{tikzpicture}.
$

We have that
\begin{align*}
\Cyc_{\CC}(T)=
\locins^2(\tilde{T})
& =
\locins^2 \,
\begin{tikzpicture}[font=\tiny, scale=0.5,baseline=0cm]
\tgyoung(0cm,0cm,<\ov{6}>,<\ov{4}>,<\ov{3}>,<>)
\tgyoung(1cm,-1cm,<\ov{4}>,<\ov{2}>,<6>)
\draw (0,-3) -- (1,-2);
\draw (0.3,-2.3) node (a) {\hbox{\scalebox{.7}{$2$}}};
\draw (0.7,-2.7) node (b) {\hbox{\scalebox{.7}{$4$}}};
\end{tikzpicture}
\\
& =\locins \,
\begin{tikzpicture}[font=\tiny, scale=0.5,baseline=0cm]
\tgyoung(0cm,0cm,<\ov{6}>,<\ov{5}>,<\ov{3}>,<2>)
\tgyoung(1cm,-1cm,<\ov{4}>,<\ov{2}>,<>)
\draw (1,-3) -- (2,-2);
\draw (1.3,-2.3) node (a) {\hbox{\scalebox{.7}{$5$}}};
\draw (1.7,-2.7) node (b) {\hbox{\scalebox{.7}{$6$}}};
\end{tikzpicture}
\quad\text{ by \ref{1} of \Cref{prop:Insertion} }
\\
& =
\begin{tikzpicture}[font=\tiny, scale=0.5,baseline=0cm]
\tgyoung(0cm,0cm,<\ov{6}>,<\ov{5}>,<\ov{3}>,<2>)
\tgyoung(1cm,-1cm,<\ov{4}>,<\ov{2}>,<5>)
\tgyoung(2cm,-3cm,<6>)
\end{tikzpicture}
\quad\text{ by \ref{1} of \Cref{prop:Insertion}. }
\end{align*}
\end{exa}


\section{Insertion and shifting}
\label{insandshift}
In this section we will construct the new algorithm computing
$\Cyc^{k}_{C}(T)$ for arbitrary $k>0$ and for $T \in \SympTab((p))$, that is $T$ is 
a symplectic tableau of row shape. 
Our algorithm does not rely on the particular form of
$\Cyc^{k-1}_{C}(T)$, which allows us to overcome the problem of
controlling many
local dependencies present in Lecouvey's original algorithm.
This will enable us to prove \Cref{conji} in \Cref{onerow} for $\lambda = (p)$ and
arbitrary $\mu$.

\subsection{Main algorithm.}

For a composition $\al$ and a box $b=(i,-j)$ of $\cD_\al$, we denote
\[i=\op{col}_{\alpha}(b) \in \mathbb{Z}_{>0} \text{ \quad and \quad } j=\op{row}_{\alpha}(b) \in \mathbb{Z}_{>0}.\]

\begin{defi}
Let $\al$ be a composition and $b$ and $b'$ be two boxes of $\alpha$ such that $b < b'$ in the natural order.
The \textit{distance} between $b$ and $b'$ in $\al$ is the nonnegative integer
\[
\delta_{\alpha}(b, b') = \row_{\alpha}(b') - \row_{\alpha}(b) - \eps_\alpha(b,b'),
\text{\quad where \quad} 
\eps_\alpha(b,b')
=\begin{cases} 1 &\text{ if $\col_{\alpha}(b) \geq \col_{\alpha}(b')$},\\
0 &\text{ otherwise.}\end{cases}
\]
\end{defi}

\begin{exa}
Let $\alpha = (2,3,1)$ and let $b=(1,-1)$ be the first box of $\cD_\al$ in the natural order.
Let us compute the distance between $b$ and $b'$ for every other box $b'\in\cD_\al$.
We have 
$$
\begin{array}{l}
0 = \delta_{\alpha}((1,-1), (2,-1)) = \delta_{\alpha}((1,-1),(1,-2) )
\\
1 = \delta_{\alpha}((1,-1),(2,-2)) =  \delta_{\alpha}((1,-1),(3,-2))=\delta_{\alpha}((1,-1),(1,-3)).
\end{array}
$$
\end{exa}

\medskip

For the rest of this section, we will consider the following situation.
Let $T \in \SympTab_n((p),\mu)$ for some positive integer $p$ and some
partition $\mu=(\mu_{\ov{n}},\dots,\mu_{\ov{1}})$.
By \Cref{prop/def}, there exists $(k_1,\dots,k_n) \in
\Z_{\geq 0}^n$ such that $T$ is the unique element of the set
\[\SSYTab_{\mathcal{C}_n}((p), (k_n+\mu_{\ov{n}},k_{n-1}+\mu_{\ov{n-1}},\dots,
k_{1}+\mu_{\ov{1}},k_1,\dots,k_n)).\]
Let $\al=\wshift^k((p),\mu)_{1}$ for some integer $k \geq 0$.
Note that when computing $\wshift^k((p),\mu)$, we reduced the number of
parts in the corresponding pair of a composition and a partition
precisely $\nred := \ell(\mu) - \ell(\wshift^k((p),\mu))_2$
times. Moreover, strictly from the definition of $\wshift$ we know that
$|\al| = p - \sum_{R \leq i \leq n}\mu_{\ov{i}}$, where $R = n-\nred+1$.
It will be convenient to consider the following tableau, which keeps track of reductions.

\begin{defi}
With the previous notations,
we denote $T_\al$ the tableau of row shape obtained from $T$ by
\begin{enumerate}
\item deleting $\mu_{\overline{i}}$ occurences of $\overline{i}$ for $i=R,\ldots,n$,
\item increasing all unbarred entries (respectively decreasing all barred entries) of the resulting tableau by $\nred$.
\end{enumerate}
\end{defi}
\begin{rem}
In other words, $T_\al$ is the unique
element of the set $\SSYTab_{\mathcal{C}_{n+\nred}}((|\al|), \nu)$
with $\nu = (k_n,\dots,
k_R,k_{R-1}+\mu_{\ov{R-1}},\dots,k_{1}+\mu_{\ov{1}},\underbrace{0,...,0}_{2\nred},k_1,\dots,k_n)$. 
Also, note that for $\mu=0$, $T_\al$ coincides with $T$ for all $\al$.
\end{rem}

\begin{exa}
Let
$T=
\begin{tikzpicture}[font=\tiny, scale=0.5,baseline=0.1cm]
\tgyoung(0cm,0cm,<\ov{2}><\ov{2}><\ov{2}><\ov{1}><2>);
\end{tikzpicture},
$
so that , $p=5$, $n=2$, $\mu=(2,1)$ and $(k_2,k_1)=(1,0)$.
Take $k=4$, and compute $\wshift^{4}((p),\mu) = ((2,1),(1))$. 
We see that we have made one reduction, so that $\nred=1$, and $R=2$.
We get $T_\al = 
\begin{tikzpicture}[font=\tiny, scale=0.5,baseline=0.1cm]
\tgyoung(0cm,0cm,<\ov{3}><\ov{2}><3>);
\end{tikzpicture}
$
with the convention that boxes are labeled by
$[1,|\al|]$ in the natural order: $T_\al(1) = \ov{3}$, $T_\al(2) = \ov{2}$ and $T_\al(3) = 3$. 
\end{exa}

We are ready to describe our construction of a tableau $T_{k}$ of shape $\alpha$, which we
will later show to be equal to $\Cyc^{k}(T)$. This construction is
very algorithmic in nature, and its formal definition is given by
\Cref{alg:content}. In order to help the reader going smoothly through
this formal definition we will describe first the main idea of the
algorithm and we will illustrate it by two examples.

Notice first that for any symplectic tableau of weight $\mu$ and shape
$\lambda$ the number $|\lambda|-|\mu|$ is always even. Indeed, there
are precisely $|\lambda|-|\mu|$ boxes in this tableau, whose multiset of
contents $\op{Cont}$
is invariant by changing each content
into its opposite, that is $\op{Cont} = \overline{\op{Cont}}$. Moreover, these
contents are non-decreasing in the natural order, therefore we can
naturally match the corresponding boxes into pairs, called
\emph{partners}, such that their contents are opposite. Finally, if
we know $\mu$ and if we know the positive contents of the partners, we
can recover our initial tableau. This idea illustrates how our algorithm, consisting
in two main steps, works:
\begin{enumerate}
\item Decompose the set of boxes of $\alpha$ into two disjoint sets: \emph{partners} and \emph{singles}.
\item Assign a content to each box to obtain the tableau $T_k$. This procedure will depend on whether the box is a partner or a single.
\end{enumerate}
If $\mu = 0$, then the set of singles is empty, so the first step in our
algorithm is trivial. We start by analyzing this example, which is
much simpler and gives
a good insight of how the second step of the algorithm is working.

\begin{exa}[Weight zero]
\label{example:weightzero} 
Let $T$ be a tableau of shape $(2q)$ and weight zero (note that all tableaux of weight zero must have an even number of boxes).
We label its boxes by elements in the interval $[1,2q]$.
Fix a nonnegative integer $k$ and let $\alpha = \wshift^{k}((2q),0)_{1}  = \shift^{k}((2q))$.
Note that in this case, we always have $|\al|=2q$.
The boxes of $\alpha$ are then labeled by $[1,2q]=[1,|\al|]$ by
enumerating them in the natural order and we will write $D$ for a box $b=\square_D \in \mathcal{D}_\alpha$.
Its \textit{partner} is the box $D'=2q-D+1$.
Now, we define the tableau $T_k$ by assigning a content to the boxes of $\alpha$ as follows.
For a box $D$ of $\alpha$
\begin{equation}
  \label{eq:contentAA}
  T_{k}(D) = \begin{cases} T_{\alpha}(D) +
  \delta_{\alpha}({D'},D) &\text{ if } D >q,\\
\overline{T_{k}(D')} &\text{ if } D \leq q.
\end{cases}
\end{equation}
In particular, partner boxes have opposite contents.

\medskip

\Yboxdim{1cm}
For instance, take $q=2$ and
$T= T_{0} = 
\begin{tikzpicture}[font=\tiny, scale=0.5,baseline=0.1cm]
\Yfillcolour{cyan!30}
\tgyoung(0cm,0cm,<\ov{1}><\ov{1}><1><1>)
       \Yfillcolour{magenta!30}
\tgyoung(0cm,0cm,<\ov{1}>)
\tgyoung(3cm,0cm,<1>)
\end{tikzpicture},
$
\noindent
where we have identified partners by shading them in with the same color. 
We check that $m_0((4))=6$ (using \Cref{numsteps}), and we can compute
all the $T_k$ for $k=1,\ldots, 6$.
$$T_{1} = 
\begin{tikzpicture}[font=\tiny, scale=0.5,baseline=0cm]
\Yfillcolour{cyan!30}
\tgyoung(0cm,0cm,<\ov{1}><\ov{1}><1>,<1>)
       \Yfillcolour{magenta!30}
\tgyoung(0cm,0cm,<\ov{1}>)
\tgyoung(0cm,-1cm,<1>)
\end{tikzpicture},
T_{2} = 
\begin{tikzpicture}[font=\tiny, scale=0.5,baseline=0cm]
\Yfillcolour{cyan!30}
\tgyoung(0cm,0cm,<\ov{2}><\ov{1}>,<1><2>)
       \Yfillopacity{1}
       \Yfillcolour{magenta!30}
\tgyoung(0cm,0cm,<\ov{2}>)
\tgyoung(1cm,-1cm,<2>)
\end{tikzpicture},
T_{3} = 
\begin{tikzpicture}[font=\tiny, scale=0.5,baseline=0cm]
\Yfillcolour{cyan!30}
\tgyoung(0cm,0cm,<\ov{2}>,<\ov{1}><1><2>)
       \Yfillopacity{1}
       \Yfillcolour{magenta!30}
\tgyoung(0cm,0cm,<\ov{2}>)
\tgyoung(2cm,-1cm,<2>)
\end{tikzpicture},
T_{4} = 
\begin{tikzpicture}[font=\tiny, scale=0.5,baseline=0cm]
\Yfillcolour{cyan!30}
\tgyoung(0cm,0cm,<\ov{2}>,<\ov{1}><1>,<2>)
       \Yfillopacity{1}
       \Yfillcolour{magenta!30}
\tgyoung(0cm,0cm,<\ov{2}>)
\tgyoung(0cm,-2cm,<2>)
\end{tikzpicture},
T_{5} = 
\begin{tikzpicture}[font=\tiny, scale=0.5,baseline=0cm]
\Yfillcolour{cyan!30}
\tgyoung(0cm,0cm,<\ov{3}>,<\ov{1}>,<1><3>)
       \Yfillopacity{1}
       \Yfillcolour{magenta!30}
\tgyoung(0cm,0cm,<\ov{3}>)
\tgyoung(1cm,-2cm,<3>)
\end{tikzpicture}
\text{ and }
T_{6} = 
\begin{tikzpicture}[font=\tiny, scale=0.5,baseline=0cm]
\Yfillcolour{cyan!30}
\tgyoung(0cm,0cm,<\ov{3}>,<\ov{1}>,<1>,<3>)
       \Yfillopacity{1}
       \Yfillcolour{magenta!30}
\tgyoung(0cm,0cm,<\ov{3}>)
\tgyoung(0cm,-3cm,<3>)
\end{tikzpicture}.
$$
\end{exa}

\medskip

Now, we will explain the general case when a weight $\mu$ is
arbitrary. We already noticed that when $\mu=0$ the first part of the
algorithm, namely finding partners, is trivial. However, for arbitrary weights this part of the
algorithm is the most complex one. The procedure of finding partners
is achieved recursively and is somehow dependent on assigning
contents, that is on the second step of the algorithm. Therefore we
are performing both steps simultaneously as follows.
All boxes $S \in \mathcal{D}_{\alpha}$ such that $T_{\alpha}(S)$ is unbarred will have a partner, 
and to assign such a partner, we start with the minimal such $S$ and
we will recursively increase it. In
order to do this we introduce a variable $D = \min\{ S\in[1,|\al|]\mid
T_\al(S) \text{ is unbarred}\}$ (see \cref{alg:content}). Then, we will check the barred letters of $T_\alpha$ one by one, 
starting from $D' = \max\{ S\in[1,|\al|]\mid T_\al(S) \text{ is
barred}\}$, until we find the right partner for $D$.
To decide this we first set $M = 1$ and compute the quantity 
\begin{align}
\label{precon}
X = T_\al(D)+\de_\al(D',D).
\end{align}
Now, if 
\begin{align}
\label{ineqm1}
X<M+\nred
\text{\quad or \quad}
M\geq n-\nred+1
\end{align} 
then we declare the boxes $D$ and $D'$ to be partners and set their contents to be 
$T_{k}(D) = X$ and $T_{k}(D') = \ov{X}$. 
Then we iterate and compute the quantity (\ref{precon}) for  $D+1$ and $D'-1$, respectively, that is, 
we go on to find a partner for $D+1$ by checking first $D'-1$. 
If these conditions are not satisfied, we will declare $D'$ as well as all 
$S$ such that $S\in[D'-\mu_{\ov{M}}+1,D']$ to be single, and we define $T_{k}(S)=\ov{M+\nred}$. 
We then continue to look for a partner for $D$ by computing (\ref{precon}) 
for $D$ and $D'-\mu_{\ov{M}}$  and checking inequalities (\ref{ineqm1}) for $M := M+1$.

\begin{exa}
Let us see what this means for 
\begin{center}
\Yboxdim{1cm}
$T= T_{0} = 
\begin{tikzpicture}[font=\tiny, scale=0.5,baseline=0.1cm]
\tgyoung(0cm,0cm,<\ov{2}><\ov{2}><\ov{2}><\ov{1}><\ov{1}><\ov{1}><1><1><1>)
       \Yfillopacity{1}
       \Yfillcolour{magenta!30}
\tgyoung(5cm,0cm,<\ov{1}>)
\tgyoung(6cm,0cm,<1>)
       \Yfillopacity{1}
       \Yfillcolour{cyan!30}
\tgyoung(4cm,0cm,<\ov{1}>)
\tgyoung(7cm,0cm,<1>)
       \Yfillopacity{1}
       \Yfillcolour{gray!30}
\tgyoung(3cm,0cm,<\ov{1}>)
\tgyoung(8cm,0cm,<1>)
\end{tikzpicture}.
$
\end{center}
We have colored in partners with the same color and left singles in
white.
It is easy to check that when we construct $T_k$ for $k\leq 4$ we are assigning partners one by one,
so that the algorithm works similarly as in the weight zero
case:
$$
\begin{array}{lll}
\Yboxdim{1cm}
T_{1} = 
\begin{tikzpicture}[font=\tiny, scale=0.5,baseline=0.1cm]
\tgyoung(0cm,0cm,<\ov{2}><\ov{2}><\ov{2}><\ov{1}><\ov{1}><\ov{1}><1><1>,<1>)
       \Yfillopacity{1}
       \Yfillcolour{magenta!30}
\tgyoung(5cm,0cm,<\ov{1}>)
\tgyoung(6cm,0cm,<1>)
       \Yfillopacity{1}
       \Yfillcolour{cyan!30}
\tgyoung(4cm,0cm,<\ov{1}>)
\tgyoung(7cm,0cm,<1>)
       \Yfillopacity{1}
       \Yfillcolour{gray!30}
\tgyoung(3cm,0cm,<\ov{1}>)
\tgyoung(0cm,-1cm,<1>)
\end{tikzpicture},
\medskip
&
T_2 =
\begin{tikzpicture}[font=\tiny, scale=0.5,baseline=0.1cm]
\tgyoung(0cm,0cm,<\ov{2}><\ov{2}><\ov{2}><\ov{1}><\ov{1}><\ov{1}><1>,<1><1>)
       \Yfillopacity{1}
       \Yfillcolour{magenta!30}
\tgyoung(5cm,0cm,<\ov{1}>)
\tgyoung(6cm,0cm,<1>)
       \Yfillopacity{1}
       \Yfillcolour{cyan!30}
\tgyoung(4cm,0cm,<\ov{1}>)
\tgyoung(0cm,-1cm,<1>)
       \Yfillopacity{1}
       \Yfillcolour{gray!30}
\tgyoung(3cm,0cm,<\ov{1}>)
\tgyoung(1cm,-1cm,<1>)
\end{tikzpicture},
&
T_3 =
\begin{tikzpicture}[font=\tiny, scale=0.5,baseline=0.1cm]
\tgyoung(0cm,0cm,<\ov{2}><\ov{2}><\ov{2}><\ov{1}><\ov{1}><\ov{1}>,<1><1><1>)
       \Yfillopacity{1}
       \Yfillcolour{magenta!30}
\tgyoung(5cm,0cm,<\ov{1}>)
\tgyoung(0cm,-1cm,<1>)
       \Yfillopacity{1}
       \Yfillcolour{cyan!30}
\tgyoung(4cm,0cm,<\ov{1}>)
\tgyoung(1cm,-1cm,<1>)
       \Yfillopacity{1}
       \Yfillcolour{gray!30}
\tgyoung(3cm,0cm,<\ov{1}>)
\tgyoung(2cm,-1cm,<1>)
\end{tikzpicture},
\\
T_4 =
\begin{tikzpicture}[font=\tiny, scale=0.5,baseline=0.1cm]
\tgyoung(0cm,0cm,<\ov{2}><\ov{2}><\ov{2}><\ov{1}><\ov{1}>,<\ov{1}><1><1><1>)
       \Yfillopacity{1}
       \Yfillcolour{magenta!30}
\tgyoung(0cm,-1cm,<\ov{1}>)
\tgyoung(1cm,-1cm,<1>)
       \Yfillopacity{1}
       \Yfillcolour{cyan!30}
\tgyoung(4cm,0cm,<\ov{1}>)
\tgyoung(2cm,-1cm,<1>)
       \Yfillopacity{1}
       \Yfillcolour{gray!30}
\tgyoung(3cm,0cm,<\ov{1}>)
\tgyoung(3cm,-1cm,<1>)
\end{tikzpicture}.
&&
\end{array}
$$
However, for $k=5$ and $k=6$ the partners are reassigned by the algorithm. Note
that at this stage, if we
decided to keep partners as they were so far and to compute their
content by (\ref{eq:contentAA}), we would still get the correct tableau:
\begin{center}
$
T_5 =
\begin{tikzpicture}[font=\tiny, scale=0.5,baseline=0.1cm]
\tgyoung(0cm,0cm,<\ov{2}><\ov{2}><\ov{2}><\ov{2}>,<\ov{1}><\ov{1}><1><1><2>)
       \Yfillopacity{1}
       \Yfillcolour{magenta!30}
\tgyoung(1cm,-1cm,<\ov{1}>)
\tgyoung(2cm,-1cm,<1>)
       \Yfillopacity{1}
       \Yfillcolour{cyan!30}
\tgyoung(0cm,-1cm,<\ov{1}>)
\tgyoung(3cm,-1cm,<1>)
       \Yfillopacity{1}
       \Yfillcolour{gray!30}
\tgyoung(0cm,0cm,<\ov{2}>)
\tgyoung(4cm,-1cm,<2>)
\end{tikzpicture}
=
\begin{tikzpicture}[font=\tiny, scale=0.5,baseline=0.1cm]
\tgyoung(0cm,0cm,<\ov{2}><\ov{2}><\ov{2}><\ov{2}>,<\ov{1}><\ov{1}><1><1><2>)
       \Yfillopacity{1}
       \Yfillcolour{magenta!30}
\tgyoung(1cm,-1cm,<\ov{1}>)
\tgyoung(2cm,-1cm,<1>)
       \Yfillopacity{1}
       \Yfillcolour{cyan!30}
\tgyoung(0cm,-1cm,<\ov{1}>)
\tgyoung(3cm,-1cm,<1>)
       \Yfillopacity{1}
       \Yfillcolour{gray!30}
\tgyoung(3cm,0cm,<\ov{2}>)
\tgyoung(4cm,-1cm,<2>)
\end{tikzpicture},
\quad
T_6 =
\begin{tikzpicture}[font=\tiny, scale=0.5,baseline=0.1cm]
\tgyoung(0cm,0cm,<\ov{2}><\ov{2}><\ov{2}><\ov{2}>,<\ov{1}><\ov{1}><1><1>,<2>)
       \Yfillopacity{1}
       \Yfillcolour{magenta!30}
\tgyoung(1cm,-1cm,<\ov{1}>)
\tgyoung(2cm,-1cm,<1>)
       \Yfillopacity{1}
       \Yfillcolour{cyan!30}
\tgyoung(0cm,-1cm,<\ov{1}>)
\tgyoung(3cm,-1cm,<1>)
       \Yfillopacity{1}
       \Yfillcolour{gray!30}
\tgyoung(0cm,0cm,<\ov{2}>)
\tgyoung(0cm,-2cm,<2>)
\end{tikzpicture}
=
\begin{tikzpicture}[font=\tiny, scale=0.5,baseline=0.1cm]
\tgyoung(0cm,0cm,<\ov{2}><\ov{2}><\ov{2}><\ov{2}>,<\ov{1}><\ov{1}><1><1>,<2>)
       \Yfillopacity{1}
       \Yfillcolour{magenta!30}
\tgyoung(1cm,-1cm,<\ov{1}>)
\tgyoung(2cm,-1cm,<1>)
       \Yfillopacity{1}
       \Yfillcolour{cyan!30}
\tgyoung(0cm,-1cm,<\ov{1}>)
\tgyoung(3cm,-1cm,<1>)
       \Yfillopacity{1}
       \Yfillcolour{gray!30}
\tgyoung(3cm,0cm,<\ov{2}>)
\tgyoung(0cm,-2cm,<2>)
\end{tikzpicture}.
$
\end{center}
This coincidental correctness is broken when $k=8$ (for $k=7$ partners
are assigned the same as at the beginning). \cref{alg:content} produces
the following tableau
\begin{center}
$
T_8 =
\begin{tikzpicture}[font=\tiny, scale=0.5,baseline=0.1cm]
\tgyoung(0cm,0cm,<\ov{3}><\ov{2}><\ov{2}>,<\ov{2}><\ov{1}><\ov{1}><1>,<1><3>)
       \Yfillopacity{1}
       \Yfillcolour{magenta!30}
\tgyoung(2cm,-1cm,<\ov{1}>)
\tgyoung(3cm,-1cm,<1>)
       \Yfillopacity{1}
       \Yfillcolour{cyan!30}
\tgyoung(1cm,-1cm,<\ov{1}>)
\tgyoung(0cm,-2cm,<1>)
       \Yfillopacity{1}
       \Yfillcolour{gray!30}
\tgyoung(0cm,0cm,<\ov{3}>)
\tgyoung(1cm,-2cm,<3>)
\end{tikzpicture}
\neq
\begin{tikzpicture}[font=\tiny, scale=0.5,baseline=0.1cm]
\tgyoung(0cm,0cm,<\ov{2}><\ov{2}><\ov{2}>,<\ov{2}><\ov{1}><\ov{1}><1>,<1><2>)
       \Yfillopacity{1}
       \Yfillcolour{magenta!30}
\tgyoung(2cm,-1cm,<\ov{1}>)
\tgyoung(3cm,-1cm,<1>)
       \Yfillopacity{1}
       \Yfillcolour{cyan!30}
\tgyoung(1cm,-1cm,<\ov{1}>)
\tgyoung(0cm,-2cm,<1>)
       \Yfillopacity{1}
       \Yfillcolour{gray!30}
\tgyoung(0cm,-1cm,<\ov{2}>)
\tgyoung(1cm,-2cm,<2>)
\end{tikzpicture},$
\end{center}
where the tableau on the right hand side is obtained by keeping
partners the same as at the beginning and applying (\ref{eq:contentAA})
to compute their contents. Note that this tableau is not even semistandard.
Let us analyze in details the case $k=9$. We claim that the algorithm
produces the following tableau
\begin{center}
$
T_9 =
\begin{tikzpicture}[font=\tiny, scale=0.5,baseline=0.1cm]
\tgyoung(0cm,0cm,<\ov{3}><\ov{2}><\ov{2}>,<\ov{2}><\ov{1}><\ov{1}>,<1><1><3>)
       \Yfillopacity{1}
       \Yfillcolour{magenta!30}
\tgyoung(2cm,-1cm,<\ov{1}>)
\tgyoung(0cm,-2cm,<1>)
       \Yfillopacity{1}
       \Yfillcolour{cyan!30}
\tgyoung(1cm,-1cm,<\ov{1}>)
\tgyoung(1cm,-2cm,<1>)
       \Yfillopacity{1}
       \Yfillcolour{gray!30}
\tgyoung(0cm,0cm,<\ov{3}>)
\tgyoung(2cm,-2cm,<3>)
\end{tikzpicture}$.
\end{center}
Indeed, we first need to find a partner for $D=7$, which is colored in
pink. Since $T_\alpha(D) = 1$, before we find a partner of $D$, we assign a content to all the single boxes
which correspond to $\mu_{\ov{1}}$. 
This is what is happening in the first step of the algorithm: $M=1$, $D'=6$, and the distance
between $D$ and $D'$ in $\alpha = (3,3,3)$ is equal to $0$, so
$T_\alpha(D)+\delta_\al(D',D) = 1 \nless M$. In our case
$\mu_{\ov{1}}=0$, therefore nothing is happening except that we
increase $M$ and now since $T_\alpha(D)+\delta_\al(D',D) = 1 < M$ we
assign $D'$ to be the partner of $D$, and we update $D=8, D'=5$. These
boxes will also be partners (and they are colored in blue) since
$T_\alpha(D)=1$, and their distance is still equal to $0$. Updating
$D=9,D'=4$, we see that $T_\alpha(D) = 2$, therefore our algorithm is assigning a content to all the single boxes
which correspond to $\mu_{\ov{2}}$. Indeed,
$T_\alpha(D)+\delta_\al(D',D) = 3$, since $\delta_\al(D',D)=1$ and
this is not less than $M=2$, therefore $T_k(S) = \ov{2}$ for all $S
\in [D'-\mu_{\ov{2}}+1,D'] = [2,4]$, and we update $D'=1$ and
$M=3$. Finally, $M$ is bigger then the number of distinct positive
contents $R-1$ in $T_\alpha$, therefore $D$ and $D'$ are automatically
partners with contents $T_{\al}(D)+\delta_\al(D',D) = 3$ and $\ov{3}$, respectively.
\end{exa}

\begin{algorithm}[h]
\caption{Defining the tableau $T_{k}$.}
\label{alg:content}
\begin{algorithmic} 
\Require Nonnegative integers $k,k_1,\dots,k_n$ and a partition $\mu=(\mu_{\ov{n}},\dots,\mu_{\ov{1}})$.
\Ensure The tableau $T_{k}: \cD_\al\ra \cC$ of shape $\al$.
\State $p = \sum_{i = 1}^n(2k_i+\mu_{\ov{i}})$
\State $\al = \wshift^{k}((p),\mu)_{1}$
\State $\nred = \ell(\mu)-\ell(\wshift^{k}((p),\mu)_{2})$
\State $R = n-\nred +1$
\State $D = \min\{ S\in[1,|\al|]\mid T_\al(S) \text{ is unbarred}\}$
\State $D' = \max\{ S\in[1,|\al|]\mid T_\al(S) \text{ is barred}\}$
\State $M = 1$
\While{$D\leq |\al|$}
\State $\partners = \false$
    \State $X = T_\al(D)+\de_\al(D',D)$
    \If{$\partners == \false$} 
        \If{$X<M+\nred$ or $M\geq R$} 
            \State $\partners = \true$
            \State $T_{k}(D') = \ov{X}$, $T_{k}(D)=X$
            (the boxes $D'$ and $D$ are said to be \textit{partners})
            \State $D=D+1$, $D'=D'-1$
        \Else
            \State $T_{k}(S)=\ov{M+\nred}$ for all $S\in[D'-\mu_{\ov{M}}+1,D']$
            \State $D'=D'-\mu_{\ov{M}}$
            \State $M=M+1$
        \EndIf
    \EndIf
\EndWhile
\If{$D'>1$}
    \State{$T_k(S)=T_\al(S)$ for all $S\in[1,D'-1]$}
\EndIf
\end{algorithmic}
\end{algorithm}

\begin{rem}\label{rem_philo_2}
We see that the tableau $T_k$ is determined by: 
\begin{itemize}
\item The composition shape $\al$ obtained by shifting $k$ times. This data is inherited from type $\AAA$, as explained in \Cref{sec:CyclageA}.
\item The tableau $T$ (which determines $T_\al$), which can be understood as the type $\CC$ ``initial data''.
\end{itemize}
\end{rem}

\subsection{Local shifting}
\label{locshift}

In order to prove \Cref{thm:crucial}, we need to refine the construction of $T_k$
into tableaux of augmented shapes, that will be denoted $T_{k,s}$.
From now on, we will systematically identify a box with its label
(obtained from the natural order).

Take $p,\mu$ and $k$ as before and let $\al =
\op{simp}\bigg(\wshift^k\big((p),\mu\big)\bigg)_1$. Let $r = \alpha_{j+1}$,
where $j = \min\{i: \alpha_i = \max_k\alpha_k\}$ and for any $1 \leq s
\leq r$, set $\al^s =\localshift^s(\alpha)$, so that $\al^s$ is an augmented composition. 
Let $c,c+1 \in [1,|\al|]$ denote the labels (in the natural order) of the augmented boxes in $\alpha^s$. 
We define $\pos_{\al,s}:[1,|\al|]\to [1,|\al|]$ as
\[
  \pos_{\al,s}(x) = \begin{cases}
    x+1 &\text{ if } x \in [c+1-s,c),\\
    x &\text{ otherwise. }
   \end{cases}\]
and we set
\[
  \delta_{\alpha^s}(x,y) = \begin{cases}
    \delta_{\alpha}(\pos_{\al,s} (x), \pos_{\al,s} (y)) &\text{ if }
    x,y \neq c\text{ or } s=1\\
    \delta_{\alpha}(\pos_{\al,s} (x),c) &\text{ if }
    s>1,y=c,T_{\al}(c)\text{ is barred, }\\
            \delta_{\alpha}(\pos_{\al,s} (x),c+1) &\text{ if }
            s>1,y=c,T_{\al}(c)\text{ is not barred, }\\
                \delta_{\alpha}(c, \pos_{\al,s} (y)) &\text{ if }
    s>1,x=c,T_{\al}(c)\text{ is barred, }\\
            \delta_{\alpha}(c+1, \pos_{\al,s} (y)) &\text{ if }
    s>1,x=c,T_{\al}(c)\text{ is not barred. }
   \end{cases}\]
Finally, we define a tableau $T_{k,s}$ of shape
$\alpha^s$ by applying the following modification of \Cref{alg:content}: instead of $\alpha,\delta_\alpha,\nred$ we
use $\alpha^s, \delta_{\alpha^s}$ and $\nred'=\ell(\mu)-\ell\bigg(\wshift^{k+1}\big((p),\mu\big)_2\bigg)$ respectively.

\medskip

The tableaux $T_{k}$ (respectively $T_{k,s}$) have some very useful properties, the most important of which we encompass in the following crucial lemma. 
For $x\in\cC$, denote $\I_x =T_{k}^{-1}(\{x\})$ (respectively $\I_x = T_{k,s}^{-1}(\{x\})$)
and $\I_{\leq x} = T_k^{-1}(\{y\in\cC\mid y\leq x\})$ (respectively  $\I_{\leq x} = T_{k,s}^{-1}(\{y\in\cC\mid y\leq x\})$).

\begin{lemma}
  \label{lem:crucial}
  The following properties hold true.
\begin{enumerate}
\item
\label{oaoa2}
$T_{k}$ and $T_{k,s}$ are natural tableaux, that is for all $1\leq  t < u\leq |\al|$ one has
$T_{k}(t) \leq T_{k}(u)$ and
$T_{k,s}(t) \leq T_{k,s}(u)$ (see \cref{def:NaturalTableau}).
\item
\label{oaoa'}
For all $1 \leq i \leq n$ and for all $0 \leq j < |\I_i|$we have that $\max\I_i-j = \partner(\min \I_{\ov{i}}+j)$.
\item
\label{oaoa}
For all $1 \leq i \leq n$, the functions $\delta_{\alpha}, \delta_{\alpha^{s}}$ are constant on
the product of intervals $\I_{\overline i} \times\I_i$.
\end{enumerate}
\end{lemma}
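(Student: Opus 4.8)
The plan is to prove the three properties essentially simultaneously, by induction on the number of iterations of the while loop in \Cref{alg:content} (equivalently, on the value of the control variable $D$, or on the index $i$ ranging over $1,\dots,n$ together with the auxiliary index $M$). The key structural fact to isolate first is that the algorithm processes the unbarred entries of $T_\al$ from left to right and the barred entries from right to left, and that it does so in "blocks": for a fixed value of $i$, all the boxes $D$ with $T_\al(D)=i$ are handled consecutively, and during this stretch the matching pointer $D'$ sweeps (possibly skipping over blocks of single boxes of various colors $\ov M$) through exactly the block of boxes with $T_\al(D')=\ov i$ together with the single boxes sitting to their right. So the first step is a bookkeeping lemma: \emph{after} the block for index $i$ is finished, the unassigned part of $\cD_\al$ is again an initial segment $[1,D'-1]$ with $T_\al$ barred on it, the variable $M$ equals $i+1$, and $D'=\min\I_{\ov{i+1}}+\cdots$ in the obvious way. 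This is the skeleton on which everything hangs, and it should be extracted cleanly before touching naturality.

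For \eqref{oaoa'}, once the block structure is in place it is almost a definition chase: within the $i$-block the algorithm assigns $T_k(D)=X$ and $T_k(D')=\ov X$ in lockstep, incrementing $D$ and decrementing $D'$ by one each time a pair is declared partners, and the single boxes (content $\ov{M+\nred}$ with $M<i$) are flushed out of the right end before any $i$-partner is formed. Hence the $j$-th box of $\I_i$ counted from the right, namely $\max\I_i-j$, is paired with the $j$-th box of $\I_{\ov i}$ counted from the left, namely $\min\I_{\ov i}+j$; and the contents are genuinely opposite because we wrote $T_k(D')=\ov X=\ov{T_k(D)}$. One has to check that $|\I_i|=|\I_{\ov i}|$, i.e. that the $i$-block on the unbarred side and the $\ov i$-block on the barred side have the same length after the singles are removed — this is exactly the content-symmetry $\op{Cont}=\overline{\op{Cont}}$ of symplectic tableaux invoked informally before the algorithm, combined with the observation that reductions ($\nred$) delete barred letters in matched $\ov i$/$i$ amounts, and that singles account precisely for the $\mu_{\ov M}$ surplus of barred over unbarred.

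For \eqref{oaoa}, the point is that $\de_\al(D',D)$ appears only through the quantity $X=T_\al(D)+\de_\al(D',D)$, and within the $i$-block $D$ and $D'$ move in opposite directions by the same amount, so $\row_\al(D)$ increases and $\row_\al(D')$ decreases; one must show the combined row displacement is compensated so that $\de_\al(D',D)=\row_\al(D')-\row_\al(D)-\eps_\al(D',D)$ stays constant across the block. This is a purely geometric statement about the shape $\al=\wshift^k((p),\mu)_1$ (resp.\ $\al^s=\localshift^s$ of it): $\al$ is a unimodal composition obtained by iterated (weighted/local) shifts of a single row, so its columns have a very rigid profile, and moving one box later in the natural order on the high-index side while moving one box earlier on the low-index side changes $\row$ by $+1$ and $-1$ respectively exactly when no column boundary is crossed — and the $\eps$ correction absorbs the boundary crossings. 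The cleanest way to organize this is to first prove it for $\al$ itself using the explicit description of $\shift$, then transfer to $\al^s$ using the definition of $\delta_{\alpha^s}$ via $\pos_{\al,s}$ (whose effect is to shift a small interval of labels by one, i.e.\ it never changes which \emph{column} a box of $\al^s$ corresponds to except for the split box, which the case analysis in the definition of $\delta_{\alpha^s}$ is precisely designed to handle).

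Finally \eqref{oaoa2}, naturality, should be deduced \emph{after} \eqref{oaoa'} and \eqref{oaoa}: one needs $T_k(t)\leq T_k(u)$ for $t<u$. On the unbarred side this is monotonicity of $X=T_\al(D)+\de_\al(D',D)$ in $D$ — and $T_\al$ is weakly increasing by construction (it is a row tableau, hence natural) while $\de_\al(\cdot,\cdot)$ is controlled across blocks by \eqref{oaoa}; on the barred side it follows from the unbarred side by \eqref{oaoa'} since partners reverse order and negate; and the comparisons between a barred box, a single box, and an unbarred box reduce to the inequalities $X<M+\nred$ (resp.\ $X\geq M+\nred$) that triggered the branch, which say exactly that the partner content is smaller (resp.\ no larger) than the single content $\ov{M+\nred}$ being laid down. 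The tail assignment $T_k(S)=T_\al(S)$ on $[1,D'-1]$ is natural because $T_\al$ is, and it sits below everything because those are the most negative barred letters. \textbf{The main obstacle} I expect is \eqref{oaoa}: making the "row displacement is compensated" claim precise requires a careful, slightly fiddly analysis of how the unimodal shape $\al$ (and its local-shift augmentations $\al^s$) interacts with the natural order — in particular controlling the $\eps_\al$ term at column boundaries and checking that within a single $i$-block the pointers $D$ and $D'$ never straddle a configuration where the compensation fails. Everything else is essentially bookkeeping built on the block-structure lemma, but this geometric lemma is where the real work lies, and it is presumably why the authors defer the full proof to the appendix.
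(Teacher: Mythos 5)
Your plan inverts the logical order that the paper actually uses, and in doing so creates an obstacle that the paper neatly avoids. The paper proves \eqref{oaoa2} \emph{first}, directly and without any geometric analysis of the shapes $\al$ or $\al^s$: the only property of $\delta_\alpha$ needed is that it is ``bi-increasing'' (for $x<y<z$ one has $\delta_\al(x,y)\leq\delta_\al(x,z)$ and $\delta_\al(y,z)\leq\delta_\al(x,z)$), which is a one-line consequence of the definition of $\delta$ and holds for \emph{any} composition, not just the unimodal ones coming from $\wshift$. Since partners are assigned outside-in, for unbarred $t<u$ one has $u'<t'<t<u$, so bi-increasing gives $\delta_\al(t',t)\leq\delta_\al(u',u)$ and naturality follows immediately from monotonicity of $T_\al$; the barred side is mirror-symmetric and the singles are handled by inspection of the algorithm. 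The paper then establishes \eqref{oaoa'} exactly as you do (it is indeed a reformulation of the \textbf{if} branch), and finally \emph{derives} \eqref{oaoa} from \eqref{oaoa2} and \eqref{oaoa'} by a sandwich argument: combining the algorithmic identities $T_k(i^{\min})=T_k(i^{\max})=i$ with bi-monotonicity yields $i-T_\al(i^{\max})=\delta_\al(\ov{i}^{\min},i^{\max})\geq\delta_\al(\ov{i}^{\max},i^{\min})\geq i-T_\al(i^{\min})$, and monotonicity of $T_\al$ forces equality throughout. So the ``fiddly geometric lemma'' you identify as the main obstacle is not in the paper at all; it is entirely replaced by a soft inequality trick.

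There is also a genuine gap in your proposed deduction of \eqref{oaoa2} from \eqref{oaoa'} and \eqref{oaoa}. Item \eqref{oaoa} gives constancy of $\delta_\al$ only \emph{within} a single block $\I_{\ov i}\times\I_i$, so it says nothing about how the constant values $c_i$ (with $T_k\equiv i$ on $\I_i$, i.e.\ $i = T_\al + c_i$) compare between blocks. Your phrase ``$\de_\al(\cdot,\cdot)$ is controlled across blocks by \eqref{oaoa}'' is therefore not right: you would still need the across-block monotonicity of the assigned contents, and that is precisely \eqref{oaoa2}, so the argument as sketched is circular. The fix is simply to prove \eqref{oaoa2} first via bi-monotonicity, as the paper does; once you know the blocks are intervals and in the right order, \eqref{oaoa} falls out for free.
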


\begin{proof}

We will prove the statements for $T_{k}$, since the arguments for
$T_{k,s}$ are identical. Let $1\leq  t < u\leq |\al|$. If either $t$
or $u$ is a single then \Cref{alg:content} gives directly the desired inequality $T_{k}(t) \leq T_{k}(u)$.
Assume that $1\leq t < u\leq |\al|$ are such that
$T_\al(t)$ and $T_\al(u)$ are unbarred and let $t' = \partner(t), u' =
\partner(u)$. Note that $\delta_\al$ is bi-increasing, that is for
every $1\leq x < y < z\leq |\al|$ we have $\delta_\al(x,y) \leq
\delta_\al(x,z)$ and $\delta_\al(y,z) \leq
\delta_\al(x,z)$.
Therefore

\[ T_{k}({t}) = T_\al(t) +
  \delta_{\alpha}({t'}, {t}) \leq T_\al(u) +
  \delta_{\alpha}({u'}, {u}) = T_{k}({u}) \]
since the function 
$T_\al$ is increasing by definition.
This finishes the proof  of (\ref{oaoa2}) since for any $1 \leq d
\leq |\al|$ which is not a single we have

$$ T_{k}({\partner(d)})  = \overline{T_{k}(D)}.$$

Fix $i\in\cC$.  For $\ell\in\{i,\ov{i}\}$, let ${\ell}^{\min} = \min \I_{\ell}$ 
and $\ell^{\max} = \max \I_\ell$. 
By monotonicity of $\delta_{\alpha}$, \eqref{oaoa} is equivalent to the following
statement:
\[ \delta_{ \alpha}({\overline i}^{\min}, i^{\max}) =
\delta_{\alpha}({\overline i}^{\max}, i^{\min}).\]
Notice first that $i^{\max}=\partner(\ov{i}^{\min})$, and more
generally \eqref{oaoa'} holds true, which is simply a
reformulation of the {\bf if} part of \Cref{alg:content} for a fixed
value of $X=i$.
Therefore, it follows from \Cref{alg:content} that 

\[i - T_\al(i^{\max}) = \delta_{\alpha}({\overline i}^{\min},
i^{\max})
 \geq \delta_{\alpha}({\overline i}^{\max},
{i}^{\min})
 \geq i - T_\al(i^{\min}) \]
and by \eqref{oaoa2} all the inequalities above are equalities. This
finishes the proof of \eqref{oaoa}.  
\end{proof}

\begin{corollary}
\label{maciekscor}
Let $i\in\cC_{\geq 0}$, $\ell\in\Z_{\geq0}$ 
and $s_{1} \leq s_{2} \leq s_{3} \leq s_{4}\in[1,|\al|]$ such that  
\begin{align*}
\overline{T_{k}({s_1})}= \overline{T_{k}({s_2}) +\ell} = T_{k}({s_3}) +\ell = T_{k}({s_4})=&i +\ell,\\
\overline{T_{k,s}({s_1})}= \overline{T_{k,s}({s_2}) +\ell} =
  T_{k,s}({s_3}) +\ell = T_{k,s}({s_4})=&i+\ell, \text{ respectively.}
\end{align*} 
\noindent
Then 
\begin{align*}
\delta_{\alpha}({s_1}, {s_4}) - \delta_{\alpha}({s_2}, {s_3}) \leq &\ell,\\
\delta_{\alpha^{s}}({s_1}, {s_4}) -
  \delta_{\alpha^{s}}({s_2}, {s_3}) \leq &\ell, \text{ respectively.}
\end{align*}

\end{corollary}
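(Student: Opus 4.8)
The plan is to compute both distances $\delta_\al(s_1,s_4)$ and $\delta_\al(s_2,s_3)$ explicitly from \Cref{alg:content} and \Cref{lem:crucial}, and then to reduce the desired inequality to the monotonicity of the row‑shaped tableau $T_\al$. First I would unpack the hypothesis: since $i+\ell$ is unbarred, the four equalities force
\[ T_k(s_1)=\ov{i+\ell},\qquad T_k(s_2)=\ov{i},\qquad T_k(s_3)=i,\qquad T_k(s_4)=i+\ell, \]
which is compatible with $s_1\le s_2\le s_3\le s_4$ by the naturality statement \eqref{oaoa2}. In particular $s_3$ and $s_4$ carry unbarred values; inspecting \Cref{alg:content} one sees that a box receives an unbarred entry precisely when it is visited by the loop variable $D$ in the \textbf{while} loop, equivalently when $T_\al(D)$ is unbarred, since the $D'$‑boxes and the leftover boxes of the final \textbf{if} all receive barred entries. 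Hence $s_3,s_4$ are partner boxes, $T_\al(s_3),T_\al(s_4)$ are positive unbarred letters, and the step of \Cref{alg:content} declaring a partner pair gives
\[ \delta_\al(\partner(s_3),s_3)=T_k(s_3)-T_\al(s_3)=i-T_\al(s_3),\qquad \delta_\al(\partner(s_4),s_4)=(i+\ell)-T_\al(s_4). \]

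Next I would transport these values to the pairs appearing in the statement. By \eqref{oaoa'} partners receive opposite contents, so $s_2,\partner(s_3)\in\I_{\ov i}$, $s_3\in\I_i$ and $s_1,\partner(s_4)\in\I_{\ov{i+\ell}}$, $s_4\in\I_{i+\ell}$. Since $\delta_\al$ is constant on $\I_{\ov i}\times\I_i$ and on $\I_{\ov{i+\ell}}\times\I_{i+\ell}$ by \eqref{oaoa}, this yields
\[ \delta_\al(s_2,s_3)=i-T_\al(s_3)\qquad\text{and}\qquad \delta_\al(s_1,s_4)=(i+\ell)-T_\al(s_4), \]
so that $\delta_\al(s_1,s_4)-\delta_\al(s_2,s_3)=\ell-\bigl(T_\al(s_4)-T_\al(s_3)\bigr)$. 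As $T_\al$ is a weakly increasing tableau of row shape and $s_3\le s_4$, the bracketed difference is nonnegative, which is exactly the asserted bound $\le\ell$. The statement for $T_{k,s}$ and $\delta_{\alpha^s}$ follows mutatis mutandis from the same computation, using the corresponding assertions of \Cref{lem:crucial} and the monotonicity of the row‑shaped tableau figuring in the modified algorithm.

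The one point requiring care is the bookkeeping inside \Cref{alg:content}: one must verify that the boxes with unbarred $T_k$‑value coincide with the boxes $D$ having unbarred $T_\al$‑value — so that the identity $\delta_\al(\partner(D),D)=T_k(D)-T_\al(D)$ is legitimately available for $s_3$ and $s_4$ — and that $\delta_\al(\partner(D),D)$ really is the common value of $\delta_\al$ on $\I_{\ov{T_k(D)}}\times\I_{T_k(D)}$ furnished by \eqref{oaoa}. Both are immediate once one observes that in a semistandard row tableau the barred entries occupy an initial segment, so the leftover boxes in the final \textbf{if} carry barred entries of $T_\al$, and that $\delta_\al$ is bi‑increasing; everything else is the short computation above, and notably no reference to the explicit shape of $\Cyc_{\CC}^{k}$ is needed.
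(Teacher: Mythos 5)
Your proposal is correct and takes essentially the same route as the paper: both reduce the computation of $\delta_\alpha(s_1,s_4)$ and $\delta_\alpha(s_2,s_3)$ via the constancy statement of \Cref{lem:crucial}\,\eqref{oaoa} to the partner-assignment identity $\delta_\alpha(\partner(D),D)=T_k(D)-T_\alpha(D)$ coming from \Cref{alg:content}, and then conclude by monotonicity of the row tableau $T_\alpha$. Your write-up spells out the bookkeeping (that unbarred $T_k$-values occur exactly at boxes where $T_\alpha$ is unbarred, hence at $D$-boxes) which the paper leaves implicit, but the argument is the same computation.
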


\begin{proof}
\Cref{lem:crucial} (\ref{oaoa}) implies that
\[\delta_{\alpha}({s_1}, {s_4}) - \delta_{\alpha}({s_2}, {s_3}) =
  \delta_{\alpha^s}({s_1}, {s_4}) - \delta_{\alpha^s}({s_2}, {s_3}) = 
                                                                \ell- (T_\al(\I_{i+\ell}) -T_\al(\I_{i}) )\leq \ell,\]
since $T_\al$ is increasing.
\end{proof}

\subsection{Insertion and shifting.}

In this section, we state
\Cref{thm:crucial},
which is crucial for the proof of \Cref{mainresultintro}.The proof of this result being quite technical, we delay it to \Cref{appendix}

\begin{lemma}
\label{lem:reduction}
Let $\mu = (\mu_{\ov{n}},\mu_{\ov{n-1}},\dots,\mu_{\ov{1}})$ be a
partition, $k,k_1,\dots,k_n \in \Z_{\geq 0}$, $p =
\sum_i(2k_i+\mu_i)$ and let $\alpha = \wshift^{k}(\mu, (p))$. 
Then
\begin{align}
\label{statement:reduction}
    T_{k,1} = \localshift\red(T_{k}).
\end{align}
\end{lemma}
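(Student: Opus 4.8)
The plan is to compare the two tableaux $T_{k,1}$ and $\localshift\red(T_k)$ box by box, showing that they have the same shape and the same entries. First I would observe that the shapes agree: by \Cref{rem:reduction}, $\red(T_k)$ has shape $\op{simp}(\alpha,\mu)_1$ (where $\alpha=\wshift^k((p),\mu)_1$), and applying $\localshift$ to this gives $\alpha^1=\localshift\big(\op{simp}(\wshift^k((p),\mu))_1\big)$, which is exactly the augmented composition used to define $T_{k,1}$. So both sides are augmented tableaux of shape $\alpha^1$, and it remains to check they have equal entries.

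Next I would analyze what $\red(T_k)$ does on the level of \Cref{alg:content}. The reduction operation $\red$ deletes all occurrences of the maximal barred letter appearing in every column and shifts the remaining entries toward $0$; by \Cref{rem:reduction} this is precisely the passage from $(\alpha,\mu)$ to $\op{simp}(\alpha,\mu)$, carried out $\nred'-\nred$ extra times compared to the reductions already folded into the definition of $T_k$ (here $\nred'=\ell(\mu)-\ell(\wshift^{k+1}((p),\mu)_2)$). The key point is that the letters removed by $\red(T_k)$ are exactly the ``single'' boxes that \Cref{alg:content}, run with the parameter $\nred'$ in place of $\nred$, assigns the value $\ov{M+\nred}$ for the largest relevant values of $M$; and the shifting of the surviving entries by $\nred'-\nred$ is exactly the effect of replacing $\nred$ by $\nred'$ in the formulas $T_k(D)=T_\al(D)+\delta_\al(D',D)$ and in the threshold $X<M+\nred$. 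Concretely, one shows that $T_\al$ and $T_{\al^1}$ (the analogue for the reduced shape) are related by deleting the appropriate barred boxes and shifting, and that this deletion-and-shift commutes with the partner-assignment loop of \Cref{alg:content}, because the loop's decisions depend only on the quantities $X=T_\al(D)+\delta_\al(D',D)$ and the comparisons in \eqref{ineqm1}, all of which transform in the expected way.

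Then I would handle the $\localshift$: applying $\localshift$ to $\red(T_k)$ removes the rightmost box of the top row and splits the first box of the next row, which is exactly the difference between the shape $\op{simp}(\alpha,\mu)_1$ and $\alpha^1$; since $\localshift$ on natural tableaux (as defined after \Cref{lemma:locshift}) keeps the same multiset of entries and re-sorts them into natural order, and since $\red(T_k)$ is a natural tableau by \Cref{lem:crucial}\eqref{oaoa2}, the entries of $\localshift\red(T_k)$ in natural order coincide with those of $\red(T_k)$ in natural order. On the other side, $T_{k,1}$ is defined by the same algorithm but with $\delta_{\alpha^1}$ in place of $\delta_\alpha$; by the definition of $\delta_{\alpha^1}$ given in \Cref{locshift} (which for $s=1$ simply sets $\delta_{\alpha^1}(x,y)=\delta_\alpha(\pos_{\alpha,1}(x),\pos_{\alpha,1}(y))$ since the special cases only occur for $s>1$), the values produced are the same as those of the algorithm on the unsplit reduced shape, just relabelled by $\pos_{\alpha,1}$. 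Matching these two relabellings — the one coming from $\localshift$ and the one coming from $\pos_{\alpha,1}$ — completes the identification.

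The main obstacle I anticipate is the bookkeeping around the interaction of $\red$ with the partner/single loop: one must verify that the boxes which $\red(T_k)$ erases are precisely the singles that the $\nred'$-version of \Cref{alg:content} would strip off before running the partnering loop, and that no partner box is affected except by the uniform shift $\nred\mapsto\nred'$. This requires carefully tracking, for each value of $M$ from $1$ up to the number of reductions, which interval $[D'-\mu_{\ov M}+1,D']$ of single boxes gets value $\ov{M+\nred}$, and checking that the maximal barred value $\ov{n}$ (in the relevant alphabet) appearing in every column of $T_k$ corresponds exactly to $M$ at the top of the loop — i.e. that the ``reduction not authorized'' condition of \Cref{authorized} translates into the inequality $\max_k\alpha_k=\mu_{\ov M}$ being tested in \Cref{alg:red}. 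Once this correspondence is nailed down, the rest is a routine but lengthy unwinding of the definitions, which is presumably why the authors defer the full argument to \Cref{appendix}.
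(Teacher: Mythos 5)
Your proposal follows essentially the same route as the paper: match shapes via \Cref{rem:reduction}, reduce the problem to showing that \Cref{alg:content} run with the $\op{simp}$-reduced parameters and $\nred'$ in place of $\nred$ reproduces $\red(T_k)$ by tracking how the partner/single decisions transform, and note that the final $\localshift$ (with $\delta_{\alpha^1}$ collapsing to $\delta_\alpha$ when $s=1$) is merely a relabeling on natural tableaux. One small slip worth flagging: the map $t$ used in reduction sends $i\mapsto i+1$ and $\ov{i}\mapsto\ov{i+1}$, so surviving entries move \emph{away} from the center of the alphabet rather than ``toward $0$,'' but this does not affect the structure of your argument.
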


\begin{proof}
First, note that $\shape(\localshift\red(T_k)) = \shape(T_{k,1})$, which is a direct
consequence of  \Cref{rem:reduction}. Let $\alpha = \wshift^k\big((p),\mu\big)_1$. In order to finish
the proof it is enough to show that
performing \Cref{alg:content} with $\op{simp}(\alpha,\mu)_1,
\op{simp}(\alpha,\mu)_2,\nred'=\ell(\mu)-\ell(\op{simp}(\alpha,\mu)_2)$ in
place of $\alpha,\mu,\nred$ gives us a tableau $T'$ which is equal to $\red(T_k)$. If $\red T_k = T_k$,
there is nothing to prove. Otherwise $T_k \in
\SympTab_n(\beta,\nu)$, where $\nu =
(\mu_{\ov{n-\nred}},\dots,\mu_{\ov 1},\underbrace{0,\dots,0}_{\nred})$
and $\mu_{\ov{n-\nred}} \geq \cdots \geq  \mu_{\ov{n-\nred'+1}} > 0$. Strictly from the definition of
reduction we know that $\I_{\geq n-(\nred'-\nred)} \cap \I_{\leq
  n} = \emptyset$ thus
\[\I_{>0} = \big(\I_{>0}\cap \I_{<n-(\nred'-\nred)}\big) \cup \I_{>n}.\]
In particular for any $\square \in \I_{>0}\cap
\I_{<n-(\nred'-\nred)}$ we have
\[\delta_{\op{simp}(\alpha,\mu)_1}(\partner(\square),\square) =
  \delta_{\alpha}(\partner(\square),\square),\]
but for any $\square \in \I_{>n}$ we have
\[\delta_{\op{simp}(\alpha,\mu)_1}(\partner(\square),\square) =
  \delta_{\alpha}(\partner(\square),\square-(\nred'-\nred)),\]
since labeling in $\op{simp}(\alpha,\mu)_1$ corresponds to removing
boxes in $T_k$ with contents
$\{\ov{n-(\nred'-\nred)+1}^{\mu_{\ov{n-\nred'+1}}},\dots,\ov{n}^{\mu_{\ov{n-\nred}}}\}$. Note
that with this identification we do not label the boxes of
$\op{simp}(\alpha,\mu)_1$ by $[1,|\op{simp}(\alpha,\mu)_1|]$, but by
\[\big[1,\partner(\square)-(\mu_{\ov{n-\nred}} + \cdots +
\mu_{\ov{n-\nred'+1}})\big]\cup \big[\sum_{i}k_i+\sum_{j \leq n-\nred}\mu_{\ov
  j},|\alpha|\big],\]
where $\square = \max \I_{n-(\nred'-\nred)}$.
Therefore, for any $\square \in \I_{>0}\cap
\I_{<n-(\nred'-\nred)}$ we have
\begin{align*}
  T(\square) &=
  \delta_{\op{simp}(\alpha,\mu)_1}\big(\partner(\square),\square \big) +
               T_{\op{simp}(\alpha,\mu)_1}(\square) \\
  &=
  \delta_{\alpha}\big (\partner(\square),\square \big)+T_{\alpha}(\square)+(\nred'-\nred)
  = T_k(\square)+(\nred'-\nred)
  \end{align*}
and for any $\square \in \I_{>n}$ we have
\begin{align*}
  T(\square) &=\delta_{\op{simp}(\alpha,\mu)_1}\big (\partner(\square),\square \big) +
    T_{\op{simp}(\alpha,\mu)_1}(\square) \\
  &=
  \delta_{\alpha}\big (\partner(\square),\square\big)+T_{\alpha}(\square)
    = T_k(\square).
  \end{align*}    
Thus indeed $T' = \red(T_k)$, and we conclude the proof.
\end{proof}

We are ready to present our main theorem.

\begin{theorem}
\label{thm:crucial}
Let $\mu = (\mu_{\ov{n}},\mu_{\ov{n-1}},\dots,\mu_{\ov{1}})$ be a
partition, $k,k_1,\dots,k_n \in \Z_{\geq 0}$ and $p =
\sum_i(2k_i+\mu_i)$. Let $\alpha = \wshift^{k}(\mu, (p))$ and let $r \in \mathbb{Z}_{>0}$ be such that $\localshift^{r}(\alpha) = \shift(\alpha)$. Then, for each $1 \leq s < r$ we have

\begin{align}
\label{statement:crucial}
\op{locins}^{s}(T_{k,1}) = T_{k,s+1}.
\end{align}
\end{theorem}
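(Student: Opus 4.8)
The plan is to proceed by induction on $s$, reducing \eqref{statement:crucial} to the single-step identity $\op{locins}(T_{k,s}) = T_{k,s+1}$ for $1 \leq s \leq r-1$: for $s = 1$ this is \eqref{statement:crucial} itself, while for $s > 1$ one combines the inductive hypothesis $\op{locins}^{s-1}(T_{k,1}) = T_{k,s}$ with the single-step identity applied to $T_{k,s}$. By \Cref{lem:reduction} we have $T_{k,1} = \localshift\red(T_k)$, so the cascade $\op{locins}^{s}(T_{k,1})$ is exactly the object fed into \Cref{lemma:cyc}; this is why \eqref{statement:crucial} is the identity needed to match $T_k$ with $\Cyc^k_{\CC}$ in \Cref{onerow}.

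For the single step we fix $1 \leq s \leq r-1$, let $b$ be the split box of $\alpha^s = \localshift^s(\alpha)$, say in column $m$, set $j = (T_{k,s})_-(b)$, and write $(T_{k,s})_+ = C_1 C_2 \cdots C_t$. By the definition of $\op{locins}$ we must compute the symplectic insertion $j \to C_m$ --- legitimate because $T_{k,s}$ is natural with admissible columns by \Cref{lem:crucial}\,(\ref{oaoa2}) --- and then read $\op{locins}(T_{k,s})$ off accordingly: a plain tableau if $j \to C_m$ is a column or its ejected letter enters $C_{m+1}$ as a column, and otherwise an augmented tableau whose split box has been pushed into column $m+1$. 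The first task is to identify which case of \Cref{prop:Insertion} governs $j \to C_m$. Here I would use the structure of $T_{k,s}$ recorded in \Cref{lem:crucial}: naturality fixes the order of the entries of $C_m$ along the natural order, the partner matching \eqref{oaoa'} locates $\overline j$ (or the partner of $j$) relative to $b$, and $j \leq (T_{k,s})_+(b)$ forces the bump at or above $b$. One shows that $j \to C_m$ always lands in the branch of \Cref{prop:Insertion} whose output shape and ejected letters agree with one step of $\localshift$ and with the content rule of \Cref{alg:content}; the inequalities of \Cref{maciekscor}, applied to the quadruples of boxes carrying $\overline i, \overline i, i, i$ with $\ell$ the relevant content gap, are precisely what exclude the incompatible subcases and confine the bumping cascade to the predicted configuration.

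With the governing case in hand, \Cref{fig:1p1} (equivalently the local rules \ref{I1}--\ref{I4}) yields the shape of $j \to C_m$ and the letter it passes to $C_{m+1}$; following the cascade down the columns one checks that for $s < r-1$ the split box lands in column $m+1$ in the row prescribed by $\localshift$, so $\shape(\op{locins}(T_{k,s})) = \alpha^{s+1}$, while for $s = r-1$ the cascade terminates in a column and reproduces the plain shape $\alpha^r = \shift(\alpha)$. For the entries, the maps $\pos_{\alpha,s}$ and $\delta_{\alpha^s}$ and the content rule \eqref{precon}--\eqref{ineqm1} were tailored so that each box of $\op{locins}(T_{k,s})$ receives the value $T_{\alpha}(\cdot) + \delta_{\alpha^{s+1}}(\partner(\cdot),\cdot)$ on a partner box and $\overline{M + \nred'}$ on a single box; I would verify this box by box, comparing the output of the bumping cascade with the run of the modified \Cref{alg:content} on the data $\alpha^{s+1}, \delta_{\alpha^{s+1}}, \nred'$, with special care for the boxes whose partner assignment changes between $\alpha^s$ and $\alpha^{s+1}$ --- the effect already visible in the $k = 8$ and $k = 9$ examples --- where the reassignment is matched exactly by the one-column motion of the split box.

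The hard part will be this case analysis: one must show that for the very rigid tableaux $T_{k,s}$ the insertion $j \to C_m$ never escapes the branch of \Cref{prop:Insertion} compatible with $\localshift$, and that partner reassignment is absorbed by --- rather than obstructing --- the motion of the augmented box together with the distance bookkeeping encoded in $\delta_{\alpha^s}$. This is where \Cref{lem:crucial} and \Cref{maciekscor} do the real work, and where most of the (lengthy) verification lies; the complete argument is therefore carried out in \Cref{appendix}.
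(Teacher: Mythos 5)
Your proposal mirrors the paper's proof: induction on $s$ reduces to the one-step identity $\op{locins}(T_{k,s}) = T_{k,s+1}$, verified by identifying the governing case of \Cref{prop:Insertion} and matching shapes and entries against \Cref{alg:content}, with \Cref{lem:crucial} and \Cref{maciekscor} ruling out incompatible subcases and controlling the $\delta_{\alpha^s}$ bookkeeping. The strategy you outline is exactly what the paper's \Cref{appendix} carries out case by case.
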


The proof of \cref{thm:crucial} is technically quite involved. Therefore, in order to
motivate the reader, we will first present all the consequences of
this result, especially \Cref{mainresultintro}, and we present the
proof of \cref{thm:crucial} in a separate \cref{appendix}

\begin{cor}
\label{cor:crucial}
Let $n,p \geq 0$ be integers and $\mu =
(\mu_{\ov{n}},\mu_{\ov{n-1}},\dots,\mu_{\ov{1}})$ a partition. For any
$T \in \op{SympTab}_{n}((p),\mu)$ we have

\begin{align}
\label{crucialfin}
 \Cyc^{k}_{\CC}(T) = \red\big(T_{k}\big).
 \end{align}
 
\end{cor}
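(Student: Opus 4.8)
The plan is to prove \eqref{crucialfin} by induction on $k$, using \Cref{thm:crucial}, \Cref{lem:reduction}, and \Cref{lemma:cyc} as the main engine. The base case $k=0$ amounts to observing that $T_0$ is, by construction, the natural tableau with the same content data as $T$ (for $\mu=0$ it literally equals $T$, and in general $\red(T_0)=\red(T)=T$ since $\alpha=\wshift^0((p),\mu)_1=(p)$ and no shifting has occurred); this is a direct unravelling of \Cref{alg:content} together with \Cref{rem_philo_2}. So the content of the corollary is the inductive step: assuming $\Cyc^{k}_{\CC}(T)=\red(T_k)$, deduce $\Cyc^{k+1}_{\CC}(T)=\red(T_{k+1})$.

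For the inductive step, first I would dispose of the non-authorized case. If the cocyclage is not authorized for $\red(T_k)$, then by \Cref{authorized} the weight of $\red(T_k)$ has a part equal to its number of columns, which by \Cref{rem:reduction} is exactly the situation detected by $\op{simp}$; applying $\red$ again and invoking \Cref{rem_red_simp} and the definition of $\wshift$ lets one replace $\alpha=\wshift^k((p),\mu)_1$ by $\op{simp}(\alpha,\mu)_1$ at the cost of updating $\mu$ and $\nred$, and the statement for this reduced datum follows from \Cref{lem:reduction}, which says precisely $T_{k,1}=\localshift\red(T_k)$. So we may assume the cocyclage is authorized. Now let $r\in\Z_{>0}$ be such that $\localshift^{r}(\alpha)=\shift(\alpha)$, where $\alpha=\op{simp}(\wshift^k((p),\mu))_1$. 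By \Cref{lemma:cyc} applied to the authorized symplectic tableau $\red(T_k)$,
\[
\Cyc_{\CC}(\red(T_k)) = \red\big(\locins^{r-1}(\localshift(\red(T_k)))\big).
\]
By \Cref{lem:reduction} the inner argument $\localshift(\red(T_k))$ equals $T_{k,1}$, and by \Cref{thm:crucial} (applied with $s=r-1$) we get $\locins^{r-1}(T_{k,1})=T_{k,r}$. Finally, $T_{k,r}$ is an augmented tableau of shape $\alpha^r=\localshift^r(\alpha)=\shift(\alpha)$, and by comparing \Cref{alg:content} run with $(\alpha^r,\delta_{\alpha^r},\nred')$ against the same algorithm run with $(\shift(\alpha),\delta_{\shift(\alpha)},\nred')$ — where $\shift(\alpha)=\wshift^{k+1}((p),\mu)_1$ up to the reduction already extracted — one identifies $T_{k,r}$ with $T_{k+1}$, so that $\Cyc_{\CC}(\red(T_k))=\red(T_{k+1})$. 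Composing with the inductive hypothesis $\Cyc^{k}_{\CC}(T)=\red(T_k)$ (and noting $\red$ is idempotent, so $\Cyc_{\CC}(\red(T_k))=\Cyc_{\CC}(\Cyc^k_{\CC}(T))=\Cyc^{k+1}_{\CC}(T)$) yields \eqref{crucialfin}.

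The main obstacle is the final identification $T_{k,r}=T_{k+1}$: one must check that after $r$ local shifts the augmented shape $\alpha^r$ has closed up into the ordinary shape $\shift(\alpha)$ and, crucially, that the modified distance function $\delta_{\alpha^r}$ and the modified $\nred'$ used in the definition of $T_{k,r}$ agree with the plain $\delta_{\shift(\alpha)}$ and $\nred$ appearing in \Cref{alg:content} for step $k+1$. This is where \Cref{lem:crucial}(\ref{oaoa}) — constancy of $\delta_\alpha$ and $\delta_{\alpha^s}$ on products $\I_{\ov i}\times\I_i$ — and the bookkeeping of the position map $\pos_{\al,s}$ do the work: the split box, while it travels across the columns, never lands between two partner boxes in a way that changes their distance, precisely because partners have opposite contents and the contents are monotone in the natural order. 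Everything else is a matter of carefully matching the two runs of \Cref{alg:content} line by line, which is routine once \Cref{thm:crucial} is granted. Note that \Cref{thm:crucial} itself, whose proof is deferred to \Cref{appendix}, is the genuinely hard input; the corollary is its harvest.
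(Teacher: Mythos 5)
Your proposal follows the same skeleton as the paper's proof: induction on $k$, using \Cref{lemma:cyc} to rewrite $\Cyc_{\CC}(\red(T_k))$ as $\red\bigl(\locins^{r-1}(\localshift(\red(T_k)))\bigr)$, then \Cref{lem:reduction} to replace $\localshift(\red(T_k))$ by $T_{k,1}$, then \Cref{thm:crucial} with $s=r-1$ to get $T_{k,r}$, and finally the identification $T_{k,r}=T_{k+1}$. That is precisely the chain of equalities in the paper.

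Two small inaccuracies. First, your base case $k=0$ rests on the claim $\red(T_0)=\red(T)=T$, but $\red(T)=T$ requires $T$ to be authorized. In the degenerate case $\mu=(p)$ (so $T$ is a row of $p$ copies of $\ov n$), one has $\red(T)=\emptyset\neq T=\Cyc^0_{\CC}(T)$, and the $k=0$ equality fails. The paper avoids this by anchoring the induction at $k=1$, where the non-authorized case is handled separately: $\Cyc_{\CC}(T)=\red(T)=\emptyset$ matches $T_1=\emptyset$ since $\al=\wshift((p),\mu)_1=\emptyset$. Second, the case ``cocyclage is not authorized for $\red(T_k)$'' in your inductive step is vacuous: by the convention set in \Cref{sympchargedef}, $\red$ always produces an authorized tableau, so this branch never occurs. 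It does no harm, but it signals a misreading of the convention and adds nothing. Apart from these, the argument is sound and matches the paper's.
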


\begin{proof}
We proceed by induction on $k$. \Cref{authorized} implies that $T$ is
authorized unless $\mu_{\ov{n}} = p$, that is, unless $\mu = (p)$. If
this is the case, then $\Cyc_{\CC}(T) = \red(T) = \emptyset$. From the
other hand, applying \Cref{alg:content} we first compute $\al =
\wshift((p),\mu)_{1}=\emptyset$, therefore $T_1 = \emptyset =
\Cyc_{\CC}(T)$, as desired. If $T$ is authorized, then
\Cref{lemma:cyc} implies that
\[    \op{CoCyc}_{\CC}(T) = \red(\localshift(T)) =
  \red(\shift(T))=\red(T_1),\]
where the last equalities comes from the fact that the shape of $T$ is
simply one row and the last entry of $T$ is strictly bigger then the
first one.
We assume now that  $\Cyc^{k}_{\CC}(T) =
\red\big(T_{k}\big)$. Therefore
\[ \Cyc^{k+1}_{\CC}(T) = \Cyc_{\CC}\bigg(\red\big(T_{k}\big)\bigg)= \red\bigg(\op{locins}^{r-1}\bigg(\localshift \bigg(\red\big(T_{k}\big)\bigg)\bigg)\bigg)\] 
by \Cref{lemma:cyc}, where $r \in \mathbb{Z}_{>0}$ is such that $\localshift^{r}\big(\shape\big(\red(T_k)\big)\big) = \shift\big(\shape\big(\red(T_k)\big)\big)$.
\noindent
Applying \Cref{thm:crucial} and \Cref{lem:reduction} to the right hand
side of the above equalities we have that
\[ \Cyc^{k+1}_{\CC}(T) = \red\big(T_{k,r}\big)\]
which, by the definition and our choice of $r$, is equal to $\red\big(T_{k+1}\big)$. This
finishes the proof.
\end{proof}

\subsection{Lecouvey's conjecture}
\label{onerow}
 
In this section we are going to apply \Cref{crucialfin} to prove
\Cref{conji} in the case of a one-row $\lambda = (p)$. We need a following proposition
due to Lecouvey, which is an easy consequence of the Morris recurrence formula described in \cite{Lecouvey2005}: 

\begin{proposition}{\cite [Proposition 3.2.3.]{Lecouvey2005}}
 \label{onerowlecouvey}
Let $\mu = (\mu_{\ov{n}},\mu_{\ov{n-1}},\dots,\mu_{\ov{1}})$ be a
partition and $p \geq |\mu|$ be a positive integer. Then 
\[ K^{\CC_n}_{(p),\mu}(q) = q^{T_n(\mu)}\cdot\sum_{T \in \SympTab_n((p),\mu)} q^{\theta_n(T)}\]
where $T_{n}(\mu) = \sum_{i=1}^{n}(n-i)\mu_{\ov i}$ and 
\[\theta_{n}(T) = \sum_{i=1}^{n}(2(n-i)+1)k_i,\]
where $T \in \SSYTab_{\mathcal{C}_n}((p), (k_n+\mu_{\ov{n}},k_{n-1}+\mu_{\ov{n-1}},\dots,
  k_{1}+\mu_{\ov{1}},k_1,\dots,k_n))$.
\end{proposition}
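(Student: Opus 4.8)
The plan is to deduce the asserted formula for $K^{\CC_n}_{(p),\mu}(q)$ from Lecouvey's type-$\CC_n$ Morris recurrence for Hall--Littlewood polynomials, \cite[Theorem 3.2.1]{Lecouvey2005}, combined with the elementary branching rule for $\chi\bigl((p)\bigr)$ along $\CC_n\supset\CC_{n-1}$, proceeding by induction on $n$. Write the standard $2n$-dimensional module as $V=V'\oplus U$, with $V'$ the standard $\CC_{n-1}$-module and $U$ the coordinate plane on which $\CC_{n-1}$ acts trivially and the $n$-th torus factor acts with weights $x_n^{\pm1}$. Then $\op{S}^pV=\bigoplus_{a+b=p}\op{S}^aV'\otimes\op{S}^bU$, and since $\op{S}^bU$ has torus character $x_n^{b}+x_n^{b-2}+\dots+x_n^{-b}$ one gets
\[
\chi\bigl((p)\bigr)\;=\;\sum_{b=0}^{p}\chi\bigl((p-b)\bigr)\cdot\bigl(x_n^{b}+x_n^{b-2}+\dots+x_n^{-b}\bigr),
\]
the first factor on the right being the $\CC_{n-1}$-character. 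Expand both sides in Hall--Littlewood polynomials: on the left, substitute Lecouvey's recurrence, which expresses $P^{\CC_n}_\mu(x_1,\dots,x_n;q)$ in terms of the $P^{\CC_{n-1}}_\nu(x_1,\dots,x_{n-1};q)$ with explicit $q$- and $x_n$-coefficients, the transition being triangular (hence invertible); on the right, use $\chi\bigl((m)\bigr)=\sum_\nu K^{\CC_{n-1}}_{(m),\nu}(q)P^{\CC_{n-1}}_\nu(q)$. Comparing coefficients of each $P^{\CC_{n-1}}_\nu$ as a Laurent polynomial in $x_n$ collapses, because $\lambda=(p)$ is a single row, to the recurrence
\[
K^{\CC_n}_{(p),\mu}(q)\;=\;\sum_{k\geq 0}\,q^{\,p-\mu_{\ov n}-k}\;K^{\CC_{n-1}}_{(p-2k-\mu_{\ov n}),\,\mu'}(q),
\qquad \mu'=(\mu_{\ov{n-1}},\dots,\mu_{\ov 1}),
\]
where $k$ runs over $0\leq k\leq (p-|\mu|)/2$ and all terms vanish unless $p\equiv|\mu|\pmod 2$.

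Granting this recurrence, the inductive step is a bookkeeping match with the right-hand side of the proposition. By \Cref{prop/def}, $\SympTab_n\bigl((p),\mu\bigr)$ is parametrised by tuples $(k_1,\dots,k_n)\in\Z_{\geq 0}^n$ with $\sum_i k_i=(p-|\mu|)/2$, and deleting from the corresponding row tableau its initial block of $k_n+\mu_{\ov n}$ entries equal to $\ov n$ and its final block of $k_n$ entries equal to $n$ defines a bijection
\[
\SympTab_n\bigl((p),\mu\bigr)\;\xrightarrow{\ \sim\ }\;\bigsqcup_{k_n\geq 0}\SympTab_{n-1}\bigl((p-2k_n-\mu_{\ov n}),\mu'\bigr).
\]
Writing $T\mapsto T'$ for this bijection, a short computation gives $T_n(\mu)=T_{n-1}(\mu')+|\mu|-\mu_{\ov n}$ and, using $\sum_{i<n}k_i=(p-|\mu|)/2-k_n$, also $\theta_n(T)=\theta_{n-1}(T')+(p-|\mu|)-k_n$, whence
\[
T_n(\mu)+\theta_n(T)\;=\;\bigl(T_{n-1}(\mu')+\theta_{n-1}(T')\bigr)+\bigl(p-\mu_{\ov n}-k_n\bigr).
\]
Summing $q$ raised to this exponent over all $T$, grouping the terms by $k_n$, and applying the inductive hypothesis in rank $n-1$ to each inner sum reproduces exactly the recurrence above. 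The base case $n=1$ is the $\mathfrak{sl}_2$-computation $K^{\CC_1}_{(p),(\mu_{\ov 1})}(q)=q^{(p-\mu_{\ov 1})/2}$, immediate from $\chi\bigl((p)\bigr)=\sum_j q^{(p-j)/2}P_{(j)}(q)$ for $\mathfrak{sl}_2$ (and consistent with $T_1(\mu)=0$, $\theta_1(T)=k_1=(p-\mu_{\ov 1})/2$).

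The main obstacle is the first step: extracting the clean one-row recurrence from \cite[Theorem 3.2.1]{Lecouvey2005}. One must check that for $\lambda=(p)$ all the auxiliary terms of Lecouvey's recurrence cancel against the branching expansion, and --- more delicately --- that the various powers of $q$ produced by his Hall--Littlewood recursion assemble into precisely the monomial shift $q^{\,p-\mu_{\ov n}-k}$; this is where the data of the type-$\CC_n$ root system (the $\rho$-shift relative to $\CC_{n-1}$) enters. A secondary, purely combinatorial nuisance is uniformity in the number of parts of $\mu$: when $\mu$ has fewer than $n$ parts one allows the indices $i$ with $\mu_{\ov i}=0$, and one must keep the parametrisation of \Cref{prop/def} together with the vanishing $K^{\CC_{n-1}}_{(m),\mu'}=0$ for $m<|\mu'|$ or $m\not\equiv|\mu'|\bmod 2$ threaded consistently through the induction, so that the summation range on $k$ is exactly $\bigl[0,(p-|\mu|)/2\bigr]$.
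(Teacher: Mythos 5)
The paper does not actually prove this proposition: it is cited verbatim from \cite[Proposition 3.2.3]{Lecouvey2005}, with only the remark that it is ``an easy consequence of the Morris recurrence formula'' (Lecouvey's type-$\CC_n$ Hall--Littlewood recursion, Theorem 3.2.1). Your proposal is consistent with that sketch. The inductive machinery you set up is correct: the bijection coming from \Cref{prop/def},
\[
\SympTab_n\bigl((p),\mu\bigr)\ \xrightarrow{\ \sim\ }\ \bigsqcup_{k_n\geq 0}\SympTab_{n-1}\bigl((p-2k_n-\mu_{\ov n}),\mu'\bigr),
\]
is valid (the initial block of $k_n+\mu_{\ov n}$ copies of $\ov n$ and the final block of $k_n$ copies of $n$ are exactly what is stripped), the exponent identities $T_n(\mu)=T_{n-1}(\mu')+|\mu|-\mu_{\ov n}$ and $\theta_n(T)=\theta_{n-1}(T')+(p-|\mu|)-k_n$ check out by direct computation using $\sum_i k_i=(p-|\mu|)/2$, the $\CC_n\supset\CC_{n-1}$ branching of $\chi\bigl((p)\bigr)=\op{S}^pV$ is correct (the symmetric powers of the standard symplectic module are irreducible), and the base case $K^{\CC_1}_{(p),(\mu_{\ov 1})}(q)=q^{(p-\mu_{\ov 1})/2}$ is right.

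The genuine gap is the one you yourself flag: the displayed one-row recurrence
\[
K^{\CC_n}_{(p),\mu}(q)\;=\;\sum_{k\geq 0}q^{\,p-\mu_{\ov n}-k}\,K^{\CC_{n-1}}_{(p-2k-\mu_{\ov n}),\,\mu'}(q)
\]
is asserted, not derived. You claim that substituting Lecouvey's Hall--Littlewood recursion and comparing coefficients of $P^{\CC_{n-1}}_\nu$ ``collapses'' to this, but the collapse --- the triangular inversion, the cancellation of auxiliary terms for $\lambda=(p)$, and especially the emergence of the precise monomial $q^{\,p-\mu_{\ov n}-k}$ from the $\CC_n$-versus-$\CC_{n-1}$ $\rho$-shift --- is exactly the analytic content of the proposition. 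Without it the induction has nothing to run on, so the proposal does not establish the statement; everything else you wrote only verifies that the combinatorial right-hand side is \emph{consistent} with that recurrence, not that the recurrence holds. To close the gap one would either carry out the coefficient comparison in detail, or, more in the spirit of the paper's one-line attribution, plug Lecouvey's recursion directly into $\chi\bigl((p)\bigr)=\sum_\mu K^{\CC_n}_{(p),\mu}(q)\,P^{\CC_n}_\mu(q)$ and read off the coefficient of $P^{\CC_n}_\mu$, avoiding the separate invertibility argument; in either version the computation itself must actually be done.
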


We are ready to prove \Cref{mainresultintro}.

  \begin{proof}[Proof of \Cref{mainresultintro}]
Let $T \in \SympTab_n((p),\mu)$, where $\mu=(\mu_{\ov n},\dots,\mu_{\ov 1})$. By \Cref{prop/def} there exists
   unique nonnegative integers $k_1,\dots,k_n$ such that $T \in \SSYTab_{\mathcal{C}_n}(\lambda, (k_n+\mu_{\ov{n}},k_{n-1}+\mu_{\ov{n-1}},\dots,
  k_{1}+\mu_{\ov{1}},k_1,\dots,k_n))$. \Cref{cor:crucial} implies that
  $m(T) = \min\{k: T_k = T_{k+1}\}$, which is simply equal to
  $m_{\mu}((p))$ defined by \eqref{eq:minimal}. \Cref{numsteps} gives us
 \begin{align*} 
m(T) &= \sum_i(n-i)\mu_{\ov i} + \frac{(p-|\mu|)(p-|\mu| + 2\ell(\mu)-1)}{2} \\
&= \sum_i(n-i)\mu_{\ov i} + (\sum_ik_i)(2\sum_ik_i + 2\ell(\mu)-1).
 \end{align*}
Let us compute $E_{C(T)}$. Notice that $C(T)$ is a
column of weight $0$ and length $\sum_ik_i$. Therefore, for any
$\square,\square+1 \in \I_{>0}$ we have
\begin{align*} C(T)(\square+1)-C(T)(\square) &=
                                               \delta_{\shape(C(T))}\big(\partner(\square+1),\square+1\big)-\\
  &-\delta_{\shape(C(T))}\big(\partner(\square),\square\big)=2.
  \end{align*}
Therefore $E_{C(T)}$ consists of all positive entries of $C(T)$ and
due to the construction given by \Cref{alg:content} we know that
$\nred = \ell(\mu)$, thus
\[ E_{C(T)} = \{i+\ell(\mu)+2j\colon 1\leq i \leq n, \sum_{l \leq i-1}k_l \leq j < \sum_{l \leq i}k_l\}.\]
 Finally
 \begin{multline*} 
\ch_n(T) = m(T) + 2\sum_{i \in E_{C(T)}}(n-i) = \\
\Big[\sum_i(n-i)\mu_{\ov i} + (\sum_ik_i)(2\sum_ik_i + 2\ell(\mu)-1)\Big] 
+ 2\Big[\sum_{1\leq i \leq n}\sum_{\sum_{l \leq i-1}k_l \leq j < \sum_{l \leq i}k_l}(n-(i+2j+\ell(\mu)))\Big]\\
= \Big[\sum_i(n-i)\mu_{\ov i} + (\sum_ik_i)(2\sum_ik_i + 2\ell(\mu)-1)\Big]
+ 2\Big[\sum_{i}(n-i)k_i - \big (\sum_ik_i \big) \big (\sum_ik_i+\ell(\mu)-1 \big)\Big]\\
= \sum_{i}(n-i)(2k_i+\mu_{\ov i}) + \sum_ik_i = T_n(\mu)+\theta_n(T)
 \end{multline*}
and comparing this with \Cref{onerowlecouvey} finishes the proof.
\end{proof}

\begin{rem}\label{rem_philo_3}
Combining \Cref{rem_philo_2} and \Cref{cor:crucial}, we see that
the type $\CC$ cocyclage is controlled by the type $\AAA$ cocyclage.
Observations suggest that this phenomenon holds in a more general setting, 
and it would be interesting to investigate this further.
\end{rem}


\section{Proof of \Cref{thm:crucial}}
\label{appendix}

Our proof is by induction on $1 \leq s < r$. Before we start we need
to introduce some notation. Let $\square, \square+1$ denote the labels of the augmented boxes of $\alpha^s$,
and let $e=T_{k,s}(\square)$ and $f=T_{k,s}(\square+1)\geq e$. 
Therefore, the augmented boxes of $\alpha^{s+1}$ are labeled by
$\square+1, \square+2$. Let $C_m$ denote the $m$-th column of $T_{k,s}$. For an entry $x$ lying in the column $C$ we
denote by $C(x) \in [1,|\alpha|]\setminus\{\square\}$ the corresponding label, that is
$x \in C$ and $T_{k,s}(C(x)) = x$. We will
proceed by going through the cases described in
\Cref{prop:Insertion}. The entry $e=T_{k,s}(\square)$ will play the role of
the entry $*$ and from now on we set $C = C_S$ which is the
column containing the augmented boxes labeled by $\square,\square+1$.

\vspace{10pt}

\noindent
\textbf{\ref{1}}.
We know that $e = i$ for some $i \in \CCC_{>0}$. First, notice that $\square+1
= C(y)$, which is a direct consequence of \Cref{lem:crucial}
\eqref{oaoa2}. Indeed, we have that $x<T_{k,s}(\square) \leq
y$, therefore the only possibilities for the position of an augmented
box is either in $C(y)$ or in the box strictly below $C(y)$
necessarily with $y=i$. However, in the latter case we have
that
\[\delta_{\alpha^s}\big(\partner(\square),\square \big)-\delta_{\alpha^s}\big (\partner(C(y)),C(y) \big)
  >0,\]
which
gives a contradiction with \Cref{maciekscor} because $C(y),\square \in \I_i$. Since $\square+1
= C(y)$, which means that $T_{k,s}(\square+1) =
y$, \Cref{maciekscor} implies that $C(y) = \max \I_y$ and $C(\ov y) = \min
\I_{\ov y}$. This is a consequence of the fact that
\[ \delta_{\al^s}\big(C (\overline{y} ),b'\big) > \delta_{\al^s}\big(C
  (\overline{y} ),C(y)\big)\]
for every $b' > C(y)$ and similarly
\[\delta_{\al^s}\big(b',C(y)\big) >\delta_{\al^s}\big(C (\overline{y}
  ),C(y)\big)\]
for every $b' < C(\ov y)$. Moreover, $C(y) = \partner(C(\ov y))$ by
\Cref{lem:crucial} \eqref{oaoa'}. We also note that for every $0< j
<b$ one has
$\delta_{\al^s}\big(C(\ov{y+j}),C(y)+1\big)-\delta_{\al^s}\big(C(\ov{y}),C(y)\big)
> j$
thus $T_{k,s}(C(y)+1) \geq y+b$ by \Cref{maciekscor}. In particular
all the boxes in the interval
$\big[C(\ov{y+b-1}) -\mu_{\ov{y+b-\nred}},C(\ov{y})\big)$ are singles filled
by $\{\ov{y+1}^{\mu_{\ov{y+1-\nred}}},\dots, \ov{y+b}^{\mu_{\ov{y+b-\nred}}}\}$.
Since $i$ is unbarred, and $C(y) = \square+1$ we have that 

\[ \delta_{\alpha^{s+1}}(\square', \square+1)
= \delta_{\alpha^{s+1}}(\square',\square+1)\]
for  $\square' \in [C(\ov{y+b-1}) -\mu_{\ov{y+b-\nred}},C(\ov
y)]\setminus C$ and
\[\delta_{\alpha^{s+1}}(\square', \square+1)
= \delta_{\alpha^{s+1}}(\square',\square+1)+1\]
for $\square' \in [C(\ov{y+b-1}) -\mu_{\ov{y+b-\nred}},C(\ov y)]\cap C$.

\noindent
This implies that performing \Cref{alg:content} to obtain $T_{k,s+1}$
gives us the same result as in $T_{k,s}$ until $D = C(y)=\square+1$. At this
moment $D'=C(\ov y), M+\nred = y+1$, so we have $X = y+1 \nless
M+\nred$ and we notice that the interval
$\big(C(\ov{y+b-1})-\mu_{\ov{y+b-\nred}},C(\ov{y})\big]$ in $T_{k,s+1}$
consists of single boxes filled by $\{\ov{y+1}^{\mu_{\ov{y+1-\nred}}},\dots, \ov{y+b}^{\mu_{\ov{y+b-\nred}}}\}$. After performing these steps we
have that $D'=C(\ov{y+b-1})-\mu_{\ov{y+b-\nred}},M+\nred=y+b+1$. Since $D'
< C(\ov{z})$ we have that $X =
\delta_{\alpha^{s+1}}(D', D)+T_{\al}(D) = y+b<M+\nred$ and
$T_{k,s+1}(\square+1) = y+b, T_{k,s+1}(C(\ov{y+b-1}))= \ov{y+b}$. At this
step of the algorithm  $D =
\square+2,D' = C(\ov{y+b-1})-\mu_{\ov{y+b-\nred}}-1$ and $M+\nred=y+b+1$, therefore we have the same parameters
of \Cref{alg:content} as at a certain point of \Cref{alg:content}
performed to construct $T_{k,s}$. Thus, all the other contents of
$T_{k,s+1}$ are the same as in $T_{k,s}$. Comparing the resulting
$T_{k,s+1}$ with \ref{1} of \Cref{prop:Insertion} we conclude the
proof in this case.

\vspace{10pt}

\textbf{\ref{2.1}}. We know that $e = \ov i$ for some $i \in
\CCC_{>0}$. \Cref{lem:crucial} \eqref{oaoa2} implies that
$\square +1 = C(i-b+1)$. Since
$T_{k,s}(\square) = \ov i$ and $T_{k,s}(\square+1) = i-b+1$ we have
by \Cref{lem:crucial} \eqref{oaoa2}
that $\square \leq \partner(C(i-b+1))<\square+1$, which is possible
only when $b=1$. 
Note that performing \Cref{alg:content} to obtain $T_{k,s+1}$ 
corresponds precisely to performing \Cref{alg:content} to obtain
$T_{k,s}$. Indeed, in both cases we start from $D = \square+1,D'=\square$ and
\[\delta_{\alpha^{s}}(\I_{\ov{i}}\times \I_{i})  =
  \delta_{\alpha^{s+1}}(\I_{\ov{i}}\times \I_{i})=0.\]
Therefore $T_{k,s+1}(x) = T_{k,s}(x)$ for all $x \in [1,|\al|]$, thus
$T_{k,s+1}$ coincides with $\locins(T_{k,s})$, which is obtained form
$T_{k,s}$ by shifting the augmented box as shown in \ref{2.1} of \Cref{prop:Insertion}. This observation concludes the
proof in this case.

\vspace{10pt}

\textbf{\ref{2.2}} We know that $e = \ov i$ for some $i \in
\CCC_{>0}$. First note that necessarily $b=1$. Otherwise 
\[ \delta_{\alpha^s}\big(C(\ov{i-b+2}), C(i-b+2)\big) -
  \delta_{\alpha^s}\big(C(\ov{i-b+1}), C(i-b+1)\big))>1,\]
which is a contradiction with \Cref{maciekscor}.
Therefore \Cref{lem:crucial} \eqref{oaoa2} implies that either $\square +1 = C(\ov{i})$
or $\square +1 = C(i)$. Assuming that $\square +1 = C(\ov{i})$ we have that both $\square,\square +1 \in \I_{\ov
  i}$ but
\begin{align}
\label{c}
    \delta_{\alpha^s}(C(i), \square) = \delta_{\alpha^s}(C(i), \square+1) + 1,
\end{align}
which contradicts \Cref{maciekscor}.
Therefore $T_{k,s}(\square) = \ov{i}, T_{k,s}(\square+1) = i$. We also note that for every $0\leq x
\leq -c$ one has
$\delta_{\al^s}\big(C(\ov{i+x}),C(i)+1\big)-\delta_{\al^s}\big(C(\ov{i}),C(i)\big)
> x$
thus $T_{k,s}(C(i)+1) = T_{k,s}(\square+2) > i-c$ by \Cref{maciekscor}.  In particular
all the boxes in the interval
$\big[C(\ov{i-c}) -\mu_{\ov{i-c+1-\nred}},C(\ov{i})\big)$ are singles filled
by $\{\ov{i+1}^{\mu_{\ov{i+1-\nred}}},\dots, \ov{i-c+1}^{\mu_{\ov{i-c+1-\nred}}}\}$.
Since $i$ is unbarred, and $C(i) = \square+1$ we have that 

\[ \delta_{\alpha^{s+1}}(\square', \square+1)
= \delta_{\alpha^{s+1}}(\square',\square+1)\]
for  $\square' \in \big[C(\ov{i-c}) -\mu_{\ov{i-c+1-\nred}},C(\ov{i})\big]\setminus C$ and
\[\delta_{\alpha^{s+1}}(\square', \square+1)
= \delta_{\alpha^{s+1}}(\square',\square+1)+1\]
for $\square' \in \big[C(\ov{i-c}) -\mu_{\ov{i-c+1-\nred}},C(\ov{i})\big]\cap C$.

\noindent
This implies that performing \Cref{alg:content} to obtain $T_{k,s+1}$
gives us the same result as in $T_{k,s}$ until $D = C(i)=\square+1$. At this
moment $D'=C(\ov i), M+\nred = i+1$, so we have $X = i+1 \nless
M+\nred$ and we notice that the interval
$\big(C(\ov{i-c})-\mu_{\ov{i-c+1-\nred}},C(\ov{i})\big]$ in $T_{k,s+1}$
consists of single boxes filled by $\{\ov{i+1}^{\mu_{\ov{i+1-\nred}}},\dots, \ov{i-c+1}^{\mu_{\ov{i-c+1-\nred}}}\}$. After performing these steps we
have that $D'=C(\ov{i-c})-\mu_{\ov{i-c+1-\nred}},M+\nred=i-c+2$. Since $D'
< C(\ov{m})$ we have that $X =
\delta_{\alpha^{s+1}}(D', D)+T_{\al}(D) = i-c+1<M+\nred$ therefore
$T_{k,s+1}(\square+1) = i-c+1, T_{k,s+1}(C(\ov{i-c}))= \ov{i-c+1}$. At this
step of the algorithm  $D =
\square+2,D' = C(\ov{i-c})-\mu_{\ov{i-c+1-\nred}}-1$ and $M+\nred=i-c+2$, therefore we have the same parameters
of \Cref{alg:content} as at a certain point of \Cref{alg:content}
performed to construct $T_{k,s}$. Thus, all the other contents of
$T_{k,s+1}$ are the same as in $T_{k,s}$. Comparing the resulting
$T_{k,s+1}$ with \ref{2.2} of \Cref{prop:Insertion} we conclude the
proof in this case.

\vspace{10pt}

\textbf{\ref{2.3}}. We know that $e = \ov i$ for some $i \in
\CCC_{>0}$. We will show that in this case we necessarily have
$b=1$. Suppose that $b>1$ and notice that necessarily $y \leq \ov{i}$. Otherwise
$\partner(C(i))<y$, and $\partner(i-1)>y$ thus
\[ \delta_{\alpha^s}\big(\partner(C(i)), C(i)\big) -
  \delta_{\alpha^s}\big(\partner(C(i-1)), C(i-1)\big)>1,\]
which contradicts \Cref{maciekscor}. Therefore \Cref{lem:crucial} \eqref{oaoa2} implies that
either $\square +1 = C(y)$ (which can happen only if $y = \ov{i}$) or
$\square +1 = C(x)$. If $\square +1 = C(y)=C(\ov i)$ then both $\square,\square +1
\in \I_{\ov i}$ but
\[ \delta_{\alpha^s}\big(\square), C(i)\big) =
  \delta_{\alpha^s}\big(\square+1, C(i)\big)+1,\]
which is impossible by \Cref{maciekscor}. Therefore $\square+1 = C(x)$
so $T_{k,s}(\square) = \ov i$ and $T_{k,s}(\square+1) = x \geq \ov{i-b+1}$. \Cref{lem:crucial} \eqref{oaoa2}
implies that $\partner(C(i))\leq \square$ and $\partner(C(i-1))>\square$,
thus
\[ \delta_{\alpha^s}\big(\partner(C(i)), C(i)\big) -
  \delta_{\alpha^s}\big(\partner(C(i-1)), C(i-1)\big)>1,\]
which contradicts \Cref{maciekscor}. This finishes the proof of our
claim that $b=1$. In particular \Cref{maciekscor} implies that 
\begin{align}
  \label{eq:pomoc}
  x >
  \ov i\text{ and }T_{k,s}(C(i)-1)<i
\end{align}
since
\[ \delta_{\alpha^s}\big(C(x), C(i)\big) =
  \delta_{\alpha^s}\big(\square, C(i)-1\big) =
    \delta_{\alpha^s}\big(\square, C(i)\big)-1,\]
and $\square \in \I_{\ov i}$.

\noindent
It clear from the definition of \Cref{alg:content} that until $D'>
\square +1$ the steps of constructing $T_{k,s}$ and
$T_{k,s+1}$ coincide. In particular when $D = C(i)-1$ we know by
\eqref{eq:pomoc} that $D'=\square +1$ and since $T_{k,s}(\square) =
\ov i < T_{k,s}(D')$ \Cref{lem:crucial} \eqref{oaoa'} implies
that $\partner(C(i)-1) = \square+1$ and
\[\ov x = T_{k,s}(C(i)-1) = \delta_{\alpha^s}(\square+1,
  C(i)-1)+T_\al(C(i)-1) < M+\nred\]
or $M \geq R$.
Since $\delta_{\alpha^s}(\square+1,
  C(i)-1)=\delta_{\alpha^{s+1}}(\square+1,
  C(i)-1)$ we have that $T_{k,s}(\square') = T_{k,s+1}(\square')$ for
  all $\square' \in [\square+1,C(i)-1]$. Therefore at this point we are applying
  \Cref{alg:content} with $D = C(i), D' = \square$. We know that
  \[ T_{k,s}(C(i)) = i =
    T_\al(C(i))+\delta_{\alpha^s}\big(\partner(C(i)),C(i)\big) =
    T_\al(C(i))+\delta_{\alpha^s}(\square,C(i))\]
  where the last equality comes from \Cref{lem:crucial} \eqref{oaoa}.
  This means that $M+\nred \geq i$ and 
  \[ T_\al(C(i))+\delta_{\alpha^{s+1}}\big(\square,C(i)\big) =
    T_\al(C(i))+\delta_{\alpha^s}(\square,C(i))-1 = i-1 < M+\nred.\]
Thus $T_{k,s+1}(C(i)) = \ov{T_{k,s}(\square)} = i-1$ and at this step
we update $D = C(i)+1, D' = \square-1$, therefore $T_{k,s+1}(\square')
= T_{k,s}(\square')$ for all $1 \leq \square' < \square$ and $C(i) <
\square' \leq |\al|$. Comparing the resulting
$T_{k,s+1}$ with \ref{2.3} of \Cref{prop:Insertion} we conclude the
proof in this case. 

\vspace{10pt}

\textbf{\ref{3.1}}
We know that $e = \ov i$ for some $i \in
\CCC_{>0}$. \Cref{lem:crucial} \eqref{oaoa2} implies that
$\square +1 = C(x)$.
Indeed, $y < \ov{i} \leq x$, thus either $\square +1 = C(x)$ or
$\square +1 = \square'$, where $\square'$ is a box lying directly
under $C(x)$ and necessarily $x=\ov i$. Suppose that $\square +1 = \square'$. If $s=1$ then either there
exists $\square'' \in \I_i$ or $\mu_{\ov{i-\nred}}=\max_j
\alpha_j$. The first case contradicts \Cref{lem:crucial} \eqref{oaoa}
since
\[ \delta_{\al^s}(C(x),\square'') >
  \delta_{\al^s}(\square,\square'')\]
and the second case contradicts \Cref{lem:reduction}. Suppose that
$s>1$ and that $\square +1 = \square'$. Notice that \Cref{lem:crucial} \eqref{oaoa2} implies
that $T_{k,s}(\square'') = \ov i$ for all
$\square'' \in [C(x),\square'-1]$. If there exists $\square'' \in
\I_i$ then again
\[ \delta_{\al^s}(C(x),\square'') >
  \delta_{\al^s}(\square,\square'')\]
which contradicts \Cref{lem:crucial} \eqref{oaoa}. If $\I_i =
\emptyset$ then by the inductive hypothesis $T_{k,s}$ was obtained
as $\locins(T_{k,s-1})$, which corresponds to \ref{3.1} of
\Cref{prop:Insertion}. In this case $T_{k,s-1} =
\localshift^{-1}T_{k,s}$. Repeating this argument $s-1$ times we get
that
\[ T_{k,1} = \localshift^{1-s}T_{k,s}\]
thus $T_{k,1}$ is not authorized, which is a contradiction with
\Cref{lem:reduction}. This finishes the proof of our claim that
$\square +1 = C(x)$. 

We are going to show that
\begin{equation}
  \label{eq:!}
  T_{k,s}(\square'') = T_{k,s+1}(\square'')
  \end{equation}
for every $\square'' \in [1,|\al|]$. Comparing this with \ref{3.1} of
\Cref{prop:Insertion} we will conclude the
proof in this case. First, note that $x$ is barred. Otherwise
\[\square < \partner(C(x)) = \partner(\square+1)< \square+1\]
by \Cref{lem:crucial} \eqref{oaoa2}, and
this is clearly impossible. Note that
\[ \delta_{\al^{s}}(\square+1,\square'') =
  \delta_{\al^{s+1}}(\square+1,\square'')\]
for all $\square'' \in \I_{>0}$ and
\[ \delta_{\al^{s}}(\square,\square'') =
  \delta_{\al^{s+1}}(\square,\square'')\]
for all $\square'' \in \I_{>0}\setminus C$. Up to the step in\Cref{alg:content} when $D'\leq \square$ the construction of
$T_{k,s}$ and $T_{k,s+1}$ coincides. Since $m<i<n$ it is clear that
the transition from $D'>\square$ into $D'\leq\square$ necessarily
happens for $m<D<n$. In particular $D \in \I_{>0}\setminus C$ and
\[ \delta_{\al^{s}}(\square,\square'') =
  \delta_{\al^{s+1}}(\square,\square'').\]
In particular the construction of
$T_{k,s}$ and $T_{k,s+1}$ coincides at this step of \Cref{alg:content}, and trivially
coincides after achieving this step. This finishes the proof.

\vspace{10pt}
\textbf{\ref{3.2}}
We know that $e = \ov i$ for some $i \in
\CCC_{>0}$. \Cref{lem:crucial} \eqref{oaoa2} implies that
$\square +1 = C(n)$ since $\ov m < \ov i < n$. Therefore $T_{k,s}(\square+1) =
y$ and \Cref{maciekscor} implies that $C(n) = \max \I_n$ and $C(\ov n) = \min
\I_{\ov n}$. This is a consequence of the fact that
\[ \delta_{\al^s}\big(C (\overline{n} ),b'\big) > \delta_{\al^s}\big(C
  (\overline{n} ),C(n)\big)\]
for every $b' > C(n)$ and similarly
\[\delta_{\al^s}\big(b',C(n)\big) >\delta_{\al^s}\big(C (\overline{n}
  ),C(n)\big)\]
for every $b' < C(\ov n)$. Moreover, $C(n) = \partner(C(\ov n))$ by
\Cref{lem:crucial} \eqref{oaoa'}. We also note that for every $0< j
<b$ one has
$\delta_{\al^s}\big(C(\ov{n+j}),C(n)+1\big)-\delta_{\al^s}\big(C(\ov{n}),C(n)\big)
> j$
thus $T_{k,s}(C(n)+1) \geq n+b$ by \Cref{maciekscor}. Finally, since
$\square \in \I_{<0}$ and $\square+1 \in \I_{>0}$ we have that
\[ \delta_{\alpha^{s+1}}(\square', \square+1)
= \delta_{\alpha^{s+1}}(\square',\square+1)\]
for  $\square' \in [1,\square]\setminus C$ and
\[\delta_{\alpha^{s+1}}(\square', \square+1)
= \delta_{\alpha^{s+1}}(\square',\square+1)+1\]
for $\square' \in [1,\square]\cap C$. In particular $\nred = i-1$ thus
$\big[C(\ov{n+b-1}) -\mu_{\ov{n+b+1-i}},C(\ov{n})\big)$ are singles filled
by $\{\ov{n+1}^{\mu_{\ov{n+2-i}}},\dots,
\ov{n+b}^{\mu_{\ov{n+b+1-i}}}\}$ and $\big(C(\ov{n}),\square\big]$ are singles filled
by $\{\ov{i}^{\mu_1},\dots,
\ov{n}^{\mu_{\ov{n+1-i}}}\}$.
Therefore performing \Cref{alg:content} to obtain $T_{k,s+1}$
gives us the same result as in $T_{k,s}$ until $D = C(n)=\square+1,D'
= C(\ov{n})$. At this
moment $ M+\nred = n+1$, so since 
\[\delta_{\alpha^{s+1}}(D', \square+1)
= \delta_{\alpha^{s}}(D',\square+1)+1\] we have that $X = n+1 \nless
M+\nred$. Thus the interval
\[\big(C(\ov{n+b-1})-\mu_{\ov{n+b+1-i}},C(\ov{n})\big]\] in $T_{k,s+1}$
consists of single boxes filled by $\{\ov{n+1}^{\mu_{\ov{n+2-i}}},\dots, \ov{n+b}^{\mu_{\ov{n+b+1-i}}}\}$. After performing these steps we
have that \[D'=C(\ov{n+b-1})-\mu_{\ov{n+b+1-i}},M+\nred=n+b+1.\] Since $D'
< C(\ov{r})$ we have that 
\[X =\delta_{\alpha^{s+1}}(D', D)+T_{\al}(D) = n+b<M+\nred\] and
\[T_{k,s+1}(\square+1) = n+b, T_{k,s+1}(C(\ov{n+b-1}))= \ov{n+b}.\] At this
step of the algorithm  $D =
\square+2,D' = C(\ov{n+b-1})-\mu_{\ov{n+b+1-i}}-1$ and $M+\nred=n+b+1$, therefore we have the same parameters
of \Cref{alg:content} as at a certain point of \Cref{alg:content}
performed to construct $T_{k,s}$. Thus, all the other contents of
$T_{k,s+1}$ are the same as in $T_{k,s}$. Comparing resulting
$T_{k,s+1}$ with \ref{3.2} of \Cref{prop:Insertion} we conclude the
proof in this case.

\section*{Acknowledgments}
We thank C\'edric Lecouvey for many interesting
conversations and the anonymous reviewer for suggesting possible
extensions of our work. The SageMath computer algebra system
\cite{sagemath} has been used for experimentation leading up to many of the results
presented in the paper.

\bibliographystyle{amsalpha}


\def\cprime{$'$}
\providecommand{\bysame}{\leavevmode\hbox to3em{\hrulefill}\thinspace}
\providecommand{\MR}{\relax\ifhmode\unskip\space\fi MR }
\providecommand{\MRhref}[2]{%
  \href{http://www.ams.org/mathscinet-getitem?mr=#1}{#2}
}
\providecommand{\href}[2]{#2}

\end{document}